\title{On definite lattices bounded by a homology 3-sphere\\ and Yang-Mills instanton Floer theory}
\author{Christopher Scaduto}
\date{}
\definecolor{mint}{HTML}{239B56}
\pgfplotsset{compat=newest}
\definecolor{greenish}{rgb}{0.01, 0.75, 0.24}
\definecolor{blueish}{rgb}{0.0, 0.72, 0.92}
\definecolor{orangeish}{rgb}{1.0, 0.55, 0.0}
\newcolumntype{Y}{>{\centering\arraybackslash}X}
\newcommand{\R}{\mathbb{R}}
\newcommand{\C}{\mathbb{C}}
\newcommand{\Z}{\mathbb{Z}}
\newcommand{\F}{\mathbb{F}}
\newcommand{\Q}{\mathbb{Q}}
\newcommand{\A}{\mathsf{A}}
\newcommand{\E}{\mathsf{E}}
\newcommand{\cp}{\mathbb{C}\mathbb{P}^2}
\newcommand{\cpbar}{\overline{\mathbb{C}\mathbb{P}^2}}
\newcommand{\Ds}{\mathsf{D}}
\newtheorem{theorem}{Theorem}[section]
\newtheorem{prop}[theorem]{Proposition}
\newtheorem{lemma}[theorem]{Lemma}
\newtheorem{conjecture}[theorem]{Conjecture}
\newtheorem{corollary}[theorem]{Corollary}
\newcommand{\Addresses}{{
  \bigskip
  \footnotesize
Christopher Scaduto, \textsc{Department of Mathematics, University of Miami, Coral Gables, FL USA}\par\nopagebreak
  \textit{E-mail address}: \texttt{cscaduto@miami.edu}
}}
\begin{document}

\maketitle 

\vspace{-0.5cm}

\begin{abstract}
Using instanton Floer theory, extending methods due to Fr\o yshov, we determine the definite lattices that arise from smooth 4-manifolds bounded by certain homology 3-spheres. For example, we show that for +1 surgery on the (2,5) torus knot, the only non-diagonal lattices that can occur are $E_8$ and the indecomposable unimodular definite lattice of rank 12, up to diagonal summands. We require that our 4-manifolds have no 2-torsion in their homology.
\end{abstract}

\section{Introduction}\label{sec:intro}

Let $X$ be a smooth, closed and oriented 4-manifold. The intersection of 2-cycles defines the structure of a unimodular lattice on the free abelian group $H_2(X;\Z)/\text{Tor}$. Donaldson's celebrated Theorem A of \cite{d-connections} says that if this lattice is definite, then it is equivalent over the integers to a diagonal form $\langle \pm 1 \rangle^n$. Donaldson's original proof used instanton gauge theory, and alternative proofs were later given using Seiberg-Witten and Heegaard Floer theory \cite[Thm.9.1]{os}, in conjunction with a lattice-theoretic result due to Elkies \cite{elkies-1}.\\

For a given integer homology 3-sphere $Y$, which definite lattices arise as the intersection forms of smooth 4-manifolds with boundary $Y$? Donaldson's theorem may be viewed as the solution to this problem in the case of the 3-sphere. To date, there is only one result in which the set of definite lattices is determined and does not consist of only diagonal lattices: under the assumption that the 4-manifolds are simply-connected, Fr\o yshov showed in his PhD thesis \cite{froyshov-thesis} that the only non-diagonalizable definite lattices bounded by the Poincar\'{e} sphere are $-E_8\oplus \langle - 1 \rangle^n$. The proof uses instanton gauge theory, and no other proofs are yet available.\\

In this article we extend and reformulate some of Fr\o yshov's methods in \cite{froyshov-thesis,froyshov-inequality} to obtain further results in this direction. The central new application is the following.\\

\begin{theorem}\label{thm:genus2}
    Let $Y$ be an integer homology 3-sphere $\Z/2$-homology cobordant to $+1$ surgery on a knot with smooth 4-ball genus 2. If a smooth, compact, oriented and definite 4-manifold with no 2-torsion in its homology has boundary $Y$, then its intersection form is equivalent to one of
    \begin{equation*}
        \langle \pm 1\rangle^{n}\;\;\qquad E_8\oplus \langle +1\rangle^{n}\;\; \qquad  \Gamma_{12}\oplus \langle +1\rangle^n \label{eq:genus2}
    \end{equation*}
where $\Gamma_{12}$ is the unique indecomposable unimodular positive definite lattice of rank 12.\\
\end{theorem}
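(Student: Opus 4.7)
The strategy is to combine three ingredients: $\Z/2$-homology cobordism invariance of a Fr\o yshov-type instanton invariant $h$, an upper bound on $h$ for $+1$ surgeries coming from the 4-ball genus, and a lattice-theoretic classification in the style of Elkies.

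First I would invoke the $\Z/2$-homology cobordism invariance of $h$ under the no-$2$-torsion hypothesis (as developed earlier in the paper) to replace $Y$ by $Y' = S^3_{+1}(K)$ for a knot $K$ with $g_4(K)\leq 2$. From a smooth properly embedded genus-$2$ surface $\Sigma\subset B^4$ with $\partial\Sigma = K$, I would construct a standard filling $N$ of $Y'$: attach a $+1$-framed $2$-handle to $B^4$ along $K$, and cap the pushed-in $\Sigma$ with the core of the handle, producing a closed embedded genus-$2$ surface $\widehat\Sigma\subset N$ that represents a generator of $H_2(N;\Z)$ with self-intersection $+1$. This $N$ plays the role that the $(-E_8)$-plumbing plays in Fr\o yshov's original argument for the Poincar\'e sphere.

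Given a compact oriented definite $W$ with $\partial W = Y$, no $2$-torsion in its homology, and intersection form $L$, I would then form the closed 4-manifold $X = \overline{N}\cup_{Y'} W$ and apply the refined Fr\o yshov-type instanton inequality to $X$, using $\widehat\Sigma$ (and its multiples, via the $\mu$-map) to control the relevant moduli spaces of anti-self-dual connections on $N$. I would expect this to produce a constraint of the form
\[
  m(L) + \operatorname{rank}(L) \leq 16,
\]
where $m(L)$ is the minimum value of $|c\cdot c|$ taken over characteristic vectors $c\in L$. A standard Elkies-style enumeration of unimodular definite lattices satisfying this bound leaves, up to diagonal summands, the candidates $\langle \pm 1\rangle^n$, $E_8$, $\Gamma_{12}$, together with borderline cases such as $E_8\oplus E_8$, $D_{16}^+$, and a rank-$14$ indecomposable.

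The main obstacle will be excluding these borderline candidates that saturate the numerical bound, and pinning down signs so that only the positive-definite $E_8$ and $\Gamma_{12}$ occur as non-diagonal summands while both signs remain available in the diagonal case. I expect this step to require passing from the purely numerical inequality to the finer structure of the Fr\o yshov moduli spaces, where the no-$2$-torsion hypothesis and the reducibility pattern of flat $SU(2)$-connections on $Y'$ enter. Specifically, in the saturation case the cobordism of moduli spaces induced by $W\cup N$ must match a very rigid model which, under the no-$2$-torsion assumption, should be realizable only by the filling of $W$ corresponding to the three listed intersection forms.
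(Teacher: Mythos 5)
Your topological setup---replacing $Y$ by $+1$ surgery on $K$ via $\Z/2$-homology cobordism invariance, capping the pushed-in genus-2 surface with the core of the $+1$-framed handle, and gluing the resulting filling to $W$---is exactly the setup of Corollary \ref{cor:link}. But there are two genuine gaps after that. First, the numerical constraint you extract has the wrong shape. The instanton inequalities of Theorems \ref{thm:char2} and \ref{thm:char4} are not phrased in terms of characteristic vectors: they bound the invariants $f_2(\mathcal{L})$ and $f_4(\mathcal{L})$, which are defined via \emph{extremal} vectors $w$ (of minimal norm in their index-two coset $w+2\mathcal{L}$) and mod 2 or mod 4 counts of $\text{Min}(w+2\mathcal{L})$ weighted by classes $a\in\text{Sym}^m(\mathcal{L}^w)$. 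A bound of the form $m(L)+\operatorname{rank}(L)\leq 16$ with $m$ the minimal characteristic square cannot be correct as stated, since adjoining a $\langle+1\rangle$ summand raises both terms by one while the theorem permits $E_8\oplus\langle+1\rangle^n$ for all $n$; the characteristic-vector bound that is actually available (from the $d$-invariant, inequality (\ref{eq:dinv})) is Elkies' shadow condition $|\xi^2|\geq\operatorname{rank}-8$, which yields the 14 lattices of Table \ref{tab:table1}, not your three-plus-borderline list.

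Second, and more seriously, the elimination of the remaining candidates---which you defer to ``the finer structure of the Fr\o yshov moduli spaces''---is where essentially all of the work lies, and the characteristic-zero Fr\o yshov inequality provably cannot do it. Section \ref{sec:e72} computes $e_0(E_7^2)=1$ and $f_2(E_7^2)=2$, both consistent with a genus-2 surface, so neither Theorem \ref{thm:char0} nor Theorem \ref{thm:char2} excludes the rank-14 lattice $E_7^2$; one needs the new mod 4 inequality $f_4(\mathcal{L})\leq\lceil g/2\rceil=1$ of Theorem \ref{thm:char4} together with $f_4(E_7^2)=2$. The paper's actual proof runs Corollary \ref{cor:link} to get $f_4(\mathcal{L})=1$ and then proves Lemma \ref{lemma:f4}, a lengthy case analysis over the possible root lattices $\A_n,\D_n,\E_6,\E_7,\E_8$ and the rootless case, showing that $f_4(\mathcal{L})=1$ forces the reduced part of $\mathcal{L}$ to be $E_8$ or $\Gamma_{12}$. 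Your proposal contains no counterpart to either the mod 4 refinement or this classification. (The sign issue you raise at the end is the easy part: a negative definite non-diagonal filling glued to a positive definite one would contradict Donaldson's Theorem A, as noted immediately after the theorem statement.)
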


If a non-diagonal lattice in this list occurs, then $\langle -1 \rangle^n$ for $n\geqslant 0$ does not: if the former arises from $X_1$ and the latter from $X_2$, both with boundary $Y$, then the closed 4-manifold $X_1\cup \overline{X}_2$ has a non-diagonal form, contradicting Donaldson's Theorem A. An example realizing all the positive forms on the list is $+1$ surgery on the $(2,5)$ torus knot, which is the Brieskorn sphere $-\Sigma(2,5,9)$.

\vspace{0.25cm}

\begin{corollary}\label{cor:2,5}
    If a smooth, compact, oriented and definite 4-manifold with no 2-torsion in its homology has boundary $-\Sigma(2,5,9)$, then its intersection form is equivalent to one of 
    \begin{equation*}
        \langle +1\rangle^{n}\;\; (n\geqslant 1), \qquad E_8\oplus \langle +1\rangle^{n}\;\;(n\geqslant 0), \qquad  \Gamma_{12}\oplus \langle +1\rangle^n \;\; (n\geqslant 0) \label{eq:genus2}
    \end{equation*}
    and all of these possibilities occur.
\end{corollary}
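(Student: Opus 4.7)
The plan is to combine Theorem~\ref{thm:genus2} with explicit realizations, in two steps: derive the upper bound from the main theorem, then exhibit a 4-manifold bounding $-\Sigma(2,5,9)$ for each form in the corollary's list.

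First I would verify the hypothesis of Theorem~\ref{thm:genus2}: $-\Sigma(2,5,9)$ is by construction $+1$ surgery on the $(2,5)$ torus knot $T(2,5)$, and the smooth 4-ball genus of $T(2,5)$ equals its Seifert genus $(2-1)(5-1)/2 = 2$ by Kronheimer--Mrowka's proof of the Milnor conjecture. Taking the $\Z/2$-homology cobordism to be trivial, Theorem~\ref{thm:genus2} then says that any definite 4-manifold $X$ with boundary $-\Sigma(2,5,9)$ and no 2-torsion in $H_*(X;\Z)$ has intersection form equivalent to one of $\langle \pm 1 \rangle^n$, $E_8 \oplus \langle +1 \rangle^n$, or $\Gamma_{12} \oplus \langle +1 \rangle^n$. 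The corollary excludes the negative diagonal forms and the empty form: $\langle -1 \rangle^n$ for $n \geq 1$ is ruled out by the gluing argument given in the discussion after Theorem~\ref{thm:genus2} as soon as I exhibit $-\Sigma(2,5,9)$ as the boundary of a 4-manifold with a non-diagonal form, and the empty form $\langle \pm 1 \rangle^0$ (equivalently, a $\Z$-homology 4-ball bound with no 2-torsion) is excluded because the Fr\o yshov $h$-invariant of $-\Sigma(2,5,9)$ is nonzero, obstructing any such bound.

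For the realizations, the trace of $+1$ surgery on $T(2,5)$ is a positive-definite 4-manifold with form $\langle +1 \rangle$ and boundary $-\Sigma(2,5,9)$, and boundary-summing with $n-1$ copies of $\cp\setminus B^4$ produces $\langle +1 \rangle^n$ for all $n \geq 1$. For $E_8$ and $\Gamma_{12}$, my plan is to build explicit Kirby diagrams for bounded 4-manifolds with these forms: start from the genus-2 Seifert surface of $T(2,5)$ pushed into $B^4$ (the Milnor fiber of the plane curve singularity $y^2 = x^5$), attach 2-handles along suitable curves on the surface, and identify the resulting intersection lattices by handle slides and direct lattice-theoretic computation. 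Additional $\langle +1 \rangle^n$ summands are again obtained by boundary sum with $\cp\setminus B^4$.

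The main obstacle is the explicit $E_8$ and $\Gamma_{12}$ realizations. Unlike the Poincar\'e sphere case, where $-E_8$ appears as the Milnor fiber of $x^2 + y^3 + z^5$, the Milnor fiber of $x^2 + y^5 + z^9$ has rank $32$ and is indefinite, so one must find definite sub-handlebodies of the correct ranks $8$ and $12$ and then verify that the resulting forms are $E_8$ and $\Gamma_{12}$ (for the latter, uniqueness of the indecomposable positive-definite unimodular lattice of rank 12 is a convenient lattice-theoretic handle, reducing the verification to ruling out the two alternative rank-12 forms $E_8\oplus\langle +1\rangle^4$ and $\langle +1\rangle^{12}$).
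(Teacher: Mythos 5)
The restriction half of your argument is correct and is exactly the paper's route: apply Theorem \ref{thm:genus2} (via $g_4(T(2,5))=2$ and the trivial $\Z/2$-homology cobordism), then kill $\langle -1\rangle^n$ for $n\geqslant 0$ by gluing against a non-diagonal positive filling and invoking Donaldson's Theorem A. (Your separate appeal to the $h$-invariant for the $n=0$ case is unnecessary once the gluing argument is in place, and you assert $h(-\Sigma(2,5,9))\neq 0$ without justification, but this is harmless since the gluing argument already covers $n=0$.)

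The gap is in the realizations. For $\Gamma_{12}$ you do not need to hunt for curves on the pushed-in Seifert surface: the canonical positive-definite plumbing bounded by $-\Sigma(2,5,9)$ (Figure \ref{fig:gamma} with $k=3$) has intersection form $\Gamma_{12}$ outright. For $E_8\oplus\langle +1\rangle^n$ with $n\geqslant 1$, the paper's route is again much more concrete than yours: a positive crossing change turns $T(2,3)$ into $T(2,5)$, giving a $\langle +1\rangle$ cobordism from $-\Sigma(2,3,5)$ to $-\Sigma(2,5,9)$, which glued to the $E_8$ plumbing on $-\Sigma(2,3,5)$ realizes $E_8\oplus\langle +1\rangle$. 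The genuinely hard case is the bare $E_8$ with no diagonal summand, and here your proposal does not contain a proof: ``find definite sub-handlebodies of the correct rank inside data coming from the Milnor fiber and verify the form'' is a search strategy, not a construction, and there is no reason it terminates — note that attaching $8$ two-handles to $B^4$ with linking matrix $E_8$ and boundary $-\Sigma(2,5,9)$ requires exhibiting a specific $8$-component link, which is precisely the nontrivial content. The paper itself does not construct this filling; it cites Tange's Kirby-calculus construction and the rational-cuspidal-curve construction of \cite{golla-scaduto}. As written, your proof establishes the list as an upper bound and realizes $\langle +1\rangle^n$, but leaves the occurrence of $E_8$ (and, in the form you propose to obtain it, $\Gamma_{12}$) unproven.
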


\vspace{0.25cm}

The realizations of these lattices are straightforward, except perhaps for the case of $E_8$; see e.g. \cite{golla-scaduto}. A slightly more general statement of Theorem \ref{thm:genus2} follows from Corollary \ref{cor:genus2} below. Theorem \ref{thm:genus2} may be viewed as the next installment of the following, which itself is a kind of successor to Donaldson's Theorem A cited above.

\vspace{0.25cm}

\begin{theorem}\label{thm:genus1} Let $Y$ be an integer homology 3-sphere $\Z/2$-homology cobordant to $+1$ surgery on a knot with smooth 4-ball genus 1. If a smooth, compact, oriented and definite 4-manifold with no 2-torsion in its homology has boundary $Y$, then its intersection form is equivalent to one of
\[
    \langle \pm 1 \rangle^n \qquad \;\;\;\; E_8\oplus \langle + 1 \rangle^n
\]
\end{theorem}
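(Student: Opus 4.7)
My plan is to reduce first to $Y = S^3_{+1}(K)$ and then split according to the sign of the definite form $Q_X$. For the reduction, I would glue $X$ to the given $\Z/2$-homology cobordism $V$ from $Y$ to $S^3_{+1}(K)$. Since $V$ is a $\Z/2$-homology cobordism between integer homology spheres, $V$ has vanishing $b_2$ and no 2-torsion, so $X \cup V$ has boundary $S^3_{+1}(K)$, the same intersection form as $X$, and still satisfies the no 2-torsion hypothesis. We may therefore assume $Y = S^3_{+1}(K)$.

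In the negative definite case, the trace $W_1(K)$ of the $+1$-surgery is positive definite with $\partial W_1(K) = Y$ and intersection form $\langle +1\rangle$. Gluing $X$ to $-W_1(K)$ along $Y$ yields a closed negative definite 4-manifold with intersection form $Q_X \oplus \langle -1\rangle$ and, via Mayer--Vietoris together with $\pi_1(W_1(K)) = 1$ and torsion-free $H_*(W_1(K))$, no 2-torsion. Donaldson's Theorem A then forces $Q_X \oplus \langle -1\rangle$ to be diagonal, and cancellation gives $Q_X \cong \langle -1\rangle^n$.

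In the positive definite case the analogous glued manifold has form $Q_X \oplus \langle -1 \rangle$, which is indefinite, so Donaldson's theorem alone no longer suffices. Here I would invoke an instanton-theoretic inequality, extending Fr\o yshov \cite{froyshov-thesis, froyshov-inequality}, of the form
\[
 h(Q_X) \;\leq\; h(Y),
\]
where $h(Y)$ is the Fr\o yshov invariant of $Y$ and $h(Q_X)$ is a lattice invariant extracted from characteristic vectors (in the spirit of Elkies). In parallel, the 4-ball genus 1 assumption produces an upper bound $h(Y) \leq 1$ via the closed genus 1 surface $\hat\Sigma \subset W_1(K)$ of self-intersection $+1$ obtained by capping the Seifert surface $\Sigma \subset B^4$ with the core of the 2-handle. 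Combining the two inequalities gives $h(Q_X) \leq 1$.

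The final step is lattice-theoretic: a positive definite unimodular lattice $Q$ with $h(Q) \leq 1$ is necessarily $\langle +1\rangle^n$ (when $h(Q) = 0$) or $E_8 \oplus \langle +1\rangle^n$ (when $h(Q) = 1$), by a case analysis of characteristic vectors and the fact that $E_8$ is the unique rank $8$ positive definite even unimodular lattice. The main obstacle will be establishing the sharp upper bound $h(Y) \leq 1$ from the genus 1 geometry in instanton Floer theory; this is exactly the step that must be tightened here relative to the genus 2 setting of Theorem \ref{thm:genus2}, where the weaker bound $h(Y) \leq 2$ allows $\Gamma_{12}$ to slip through the lattice filter.
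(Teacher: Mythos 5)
Your reduction to $Y=S^3_{+1}(K)$ and your handling of the negative definite case via Donaldson's Theorem A both match the paper and are fine. The gap is in the positive definite case, specifically in the final lattice-theoretic step. It is \emph{not} true that a positive definite unimodular lattice $Q$ with characteristic-vector defect $\leqslant 1$ must be $\langle +1\rangle^n$ or $E_8\oplus\langle+1\rangle^n$: by Elkies' theorem there are exactly 14 reduced lattices (Table \ref{tab:table1}) with no characteristic vector of norm less than $\operatorname{rank}-8$, including $\Gamma_{12}=D_{12}$, $E_7^2$, $A_{15}$, \dots, $O_{23}$, and every one of them has defect exactly $1$. So your filter ``$h(Q)\leqslant 1$'' lets all 14 through, not just $E_8$. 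The same failure persists even if you upgrade to the full strength of Fr\o yshov's characteristic-zero inequality (Theorem \ref{thm:char0}): the relevant lattice invariant there is $e_0(\mathcal{L})$, and the paper records that $e_0(\Gamma_{12})=\lceil 2/2\rceil=1$ and $e_0(E_7^2)=1$, equal to $e_0(E_8)$. No characteristic-zero or characteristic-vector argument alone can separate $E_8$ from $\Gamma_{12}$ here; the paper says explicitly that Theorem \ref{thm:char0} cannot prove these results on its own.

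What is actually needed is a finer, torsion-coefficient instanton inequality. The paper's proof applies the mod 2 inequality (Theorem \ref{thm:char2}) to the closed manifold obtained by gluing $X$ to the reversed surgery cobordism and the genus-1 surface capped off in the trace, yielding $f_2(\mathcal{L})\leqslant N_\alpha^2(1)=1$, together with the mod 4 inequality giving $f_4(\mathcal{L})\leqslant 1$; Lemmas \ref{lemma:indecomp} and \ref{lemma:e8} then pin down $E_8$. Alternatively (Section \ref{sec:dinv}), one can use the $d$-invariant to reduce to Elkies' list exactly as you intend, but one must then observe that every lattice on that list other than $E_8$ contains a vector $w$ of square $3$ with $\operatorname{Min}(w+2\mathcal{L})=\{\pm w\}$, hence $m(\mathcal{L})\geqslant 2$, and invoke $f_2(\mathcal{L})\geqslant m(\mathcal{L})$ together with $f_2(\mathcal{L})\leqslant 1$ to eliminate it. Your outline is missing this mod 2 (or mod 4) input entirely, and without it the argument cannot exclude $\Gamma_{12}$ in the genus 1 case.
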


\vspace{0.25cm}

A corollary is a slight improvement of Fr\o yshov's theorem, obtained by applying the result to $+1$ surgery on the $(2,3)$ torus knot, the orientation-reversal of the Poincar\'{e} sphere $\Sigma(2,3,5)$:\\

\begin{corollary}[cf. \cite{froyshov-thesis}]\label{cor:2,3}
    If a smooth, compact, oriented and definite 4-manifold with no 2-torsion in its homology has boundary $-\Sigma(2,3,5)$, then its intersection form is equivalent to one of 
    \[
        \langle +1\rangle^{n}\;\;(n\geqslant 1),\qquad E_8\oplus \langle +1\rangle^{n}\;\;(n\geqslant 0),
    \]
    and all of these possiblities occur.
\end{corollary}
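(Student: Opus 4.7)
The plan is to deduce this corollary from Theorem \ref{thm:genus1} together with a standard gluing argument and explicit realizations. First I would observe that $-\Sigma(2,3,5)$ is $+1$ Dehn surgery on the right-handed $(2,3)$ torus knot, i.e.\ the trefoil, and that this knot has smooth 4-ball genus equal to $1$: the signature bound already gives $g_4\geqslant 1$, and a pushed-in Seifert surface realizes the upper bound. Taking the $\Z/2$-homology cobordism in Theorem \ref{thm:genus1} to be the trivial cylinder, the theorem applies, so any 4-manifold $X$ satisfying the hypotheses of the corollary has intersection form equivalent to $\langle \pm 1 \rangle^n$ or $E_8\oplus \langle +1 \rangle^n$.

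The second step is to eliminate the possibilities $\langle -1 \rangle^n$ for $n\geqslant 0$, that is, negative diagonal forms and the empty lattice. This is exactly the argument given in the paragraph immediately following Theorem \ref{thm:genus2}: once an $E_8$-filling $X_+$ of $-\Sigma(2,3,5)$ is exhibited, any hypothetical $X_-$ with intersection form $\langle -1\rangle^n$ could be glued to $X_+$ along $-\Sigma(2,3,5)$ to produce a closed smooth 4-manifold whose intersection form on the torsion-free part of $H_2$ equals $E_8\oplus \langle -1\rangle^n$, a non-diagonal definite form, contradicting Donaldson's Theorem A. The empty-lattice case is handled identically, since a filling with empty form contributes nothing to the intersection form of the glued 4-manifold.

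Finally I would verify the realizations. The form $\langle +1\rangle^n$ with $n\geqslant 1$ arises from attaching a 2-handle to $B^4$ along the right-handed trefoil with framing $+1$ (whose boundary is $+1$ surgery on the trefoil, namely $-\Sigma(2,3,5)$), followed by $n-1$ interior blow-ups, which are simply connected and 2-torsion-free. For $E_8\oplus \langle +1\rangle^n$ with $n\geqslant 0$, one takes the Milnor fiber of the $E_8$ singularity $\{x^2+y^3+z^5=0\}$ with its orientation reversed: this is a simply connected, positive definite $E_8$-manifold with boundary $-\Sigma(2,3,5)$, and connect-summing with $n$ copies of $\cp$ produces the remaining forms.

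I expect the main obstacle, such as it is, to be the careful orientation check in the $E_8$ realization — namely confirming that the orientation-reversed Milnor fiber realizes $+E_8$ over $-\Sigma(2,3,5)$ rather than over $\Sigma(2,3,5)$ — but this is entirely standard, and is addressed in e.g.\ \cite{golla-scaduto}. Beyond this bookkeeping and the Theorem \ref{thm:genus1} input, there is no analytic content; the entire gauge-theoretic weight of the corollary sits inside that theorem.
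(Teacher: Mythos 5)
Your argument is essentially identical to the paper's proof: apply Theorem \ref{thm:genus1} to $+1$ surgery on the genus-1 trefoil, realize $\langle +1\rangle$ by the surgery 2-handle (plus blow-ups) and $E_8$ by the orientation-reversed $E_8$ plumbing/Milnor fiber, and rule out $\langle -1\rangle^n$ by gluing to the $E_8$ filling and invoking Donaldson's Theorem A. One small correction: the closed manifold obtained by gluing $X_+$ to the orientation reversal of $X_-$ has intersection form $E_8\oplus\langle +1\rangle^n$ (the reversal flips the sign of the form), not $E_8\oplus\langle -1\rangle^n$, which would be indefinite for $n\geqslant 1$ and hence outside the scope of Theorem A; with that fix the contradiction reads correctly.
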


\vspace{0.25cm}

We give more examples in Section \ref{sec:more}. We expect the methods used to provide further applications. A good candidate to consider next is $-\Sigma(3,4,11)$, which is $+1$ surgery on the $(3,4)$ torus knot of genus 3. In \cite{golla-scaduto} we show that this manifold bounds the unimodular lattices $\langle +1\rangle$, $E_8$, $\Gamma_{12}$, $E_7^2$ and $A_{15}$, the last two being the indecomposable positive definite unimodular lattices of ranks 14 and 15, respectively; we exhibit some of these in Section \ref{sec:more}.\\

A straightforward Mayer-Vietoris argument shows that the statements of both Theorems \ref{thm:genus2} and \ref{thm:genus1} hold for 3-manifolds that are not integer homology 3-spheres, as long as the lattice is assumed to be {\emph{unimodular}}. In joint work with Marco Golla \cite{golla-scaduto} we provide analogues of the above results for non-unimodular lattices.\\

Other than Donaldson's Theorem A and Fr\o yshov's work in instanton Floer theory, restrictions on the possible definite lattices bounded by a fixed homology 3-sphere have previously been established using Seiberg-Witten and Heegaard Floer theory. In particular, there is a fundamental inequality for both the Heegaard Floer $d$-invariant of Oszv\'{a}th and Szab\'{o} \cite[Thm.1.11]{os} and Fr\o yshov's Seiberg-Witten correction term \cite[Thm.4]{froyshov-monopole}. A lattice theoretic result of Elkies \cite{elkies-2} implies that if an integer homology 3-sphere has either of these invariants the same as that of the Poincar\'{e} sphere, then there are only 14 possible definite lattices that occur, up to diagonal summands; see Table \ref{tab:table1}. While our proofs of all results stated above depend only on instanton theory, we will see that for Theorem \ref{thm:genus1} these restrictions from other theories can replace some, but not all, of the instanton theoretic input of the argument. The same is true for Theorem \ref{thm:genus2}, as discussed at the end of Section \ref{sec:genus2}.\\

To prove Theorems \ref{thm:genus2} and \ref{thm:genus1}, we provide partial analogues of Fr\o yshov's instanton inequality from \cite{froyshov-inequality} in which the coefficients used are the integers modulo a power of $2$, with an emphasis on the cases of $2$ and $4$. The inequalities provide new lower bounds for the genus of an embedded surface in a smooth closed 4-manifold in terms of data from the intersection form. Part of the input for these inequalities are relations in the instanton Floer cohomology ring of a circle times a surface, taken with the coefficient rings $\Z/2^k$. We only prove the relevant relations for low genus and small $k$, which is more than what is needed for our applications.\\

Apart from the determination of the relations just mentioned, the proofs of the inequalities we use are straightforward adaptations of the characteristic zero case from \cite{froyshov-inequality}, as explained in Section \ref{sec:proofs}. We also digress in Section \ref{sec:oddchar} to discuss analogues of Fr\o yshov's inequality for odd characteristic coefficients. However, these other variations do not appear to be useful.\\

Fr\o yshov has announced in several public lectures over the years the construction of two homology cobordism invariants, denoted $q_2$ and $q_3$, defined using the second and third Stiefel-Whitney classes of the basepoint fibration in the context of mod two instanton Floer theory, in a fashion similar to his construction of the $h$-invariant of \cite{froyshov-equivariant}. We expect that the inequalities studied here are relevant to this framework. We rather indirectly touch upon these matters in Section \ref{sec:alt}, where we replace our first arguments with some using instanton Floer theory for homology 3-spheres.\\

\textbf{Outline.} In Section \ref{sec:latticeterms} we state the inequalities obtained from instanton theory, our main technical tools. The proofs of these, which are adaptations of Fr\o yshov's argument to the settings of $\Z/2^k$ coefficients, are presented in Section \ref{sec:proofs}. In Section \ref{sec:genus1} we prove Theorem \ref{thm:genus1} and Corollary \ref{cor:2,3}. In Section \ref{sec:genus2} we prove Theorem \ref{thm:genus2} and Corollary \ref{cor:2,5}. More examples are presented in Section \ref{sec:more}. In Section \ref{sec:surface} we prove some relations in the instanton cohomology of a circle times a surface.  An alternative proof of Corollary \ref{cor:2,5}, closer in spirit to Fr\o yshov's proof of Corollary \ref{cor:2,3} and emphasizing the role of instanton Floer homology for homology 3-spheres, is presented in Section \ref{sec:alt}. Finally, in Section \ref{sec:e72}, we discuss an example of a rank 14 definite unimodular lattice $E_7^2$ which illustrates the necessity of the mod 4 data used in the proof of Theorem \ref{thm:genus2}.\\

\textbf{Acknowledgments.} The author thanks Kim Fr{\o}yshov for his encouragement and several informative discussions. The work here owes a great debt to his foundational work in instanton homology. Thanks to Motoo Tange for being the first to inform the author that $-\Sigma(2,5,9)$ bounds $E_8$. The author also thanks Marco Golla, Ciprian Manolescu, Matt Stoffregen and Josh Greene for helpful correspondences. 
The author was supported by NSF grant DMS-1503100.

\section{The inequalities}\label{sec:latticeterms}

In this section we state partial analogues of Fr\o yshov's instanton inequality from \cite{froyshov-inequality} when the coefficients used are the integers modulo certain powers of $2$. Our primary focus will be the case of $\Z/4$; we also discuss the case of $\Z/2$, which is most relevant to Sections \ref{sec:alt} and \ref{sec:e72}. In addition, we make one use of the case $\Z/8$. The proofs of the results in this section are presented in Section \ref{sec:proofs}. For context, we also recall Fr\o yshov's inequality of \cite{froyshov-inequality}. The reader interested in the applications may wish to skip this section and refer back when needed.\\

Let $\mathbb{V}_g$ denote the $\mathbb{Z}/4$-graded instanton cohomology of a circle times a surface of genus $g$ equipped with a $U(2)$-bundle having odd determinant line bundle. The 4D cobordism defined by a 2D pair of pants cobordism crossed with the surface induces a map $\mathbb{V}_g\otimes \mathbb{V}_g \to \mathbb{V}_g$ endowing $\mathbb{V}_g$ with the structure of an associative ring with unit. Mu\~{n}oz \cite{munoz} determined a presentation for this ring over $\Q$ which is recursive in the genus, and we will see later that $\mathbb{V}_g$ is torsion-free. There are two distinguished elements in $\mathbb{V}_g$, denoted $\alpha$ and $\beta$, of degrees $2$ and $0$ mod 4, respectively. Define
\begin{equation*}
  N^2_\alpha(g) \;:=\; \min\left\{ n\geqslant 1:\; \alpha^n \equiv 0 \in \overline{\mathbb{V}}_g\otimes \Z/2 \right\}
\end{equation*}
for $g\geqslant 1$, and $N_\alpha^2(0)=0$. Here $\overline{\mathbb{V}}_g$ denotes the quotient of $\mathbb{V}_g$ by relative Donaldson invariants involving $\mu$-classes of loops; see Sections \ref{sec:surface} and \ref{sec:muclasses} for more details. The element $\alpha$(mod 2) in $\mathbb{V}_g\otimes\Z/2$ may be defined using the second Stiefel-Whitney class of the basepoint fibration, while $\beta\in \mathbb{V}_g$ is defined using the first Pontryagin class of the basepoint fibration. Let $k$ be a power of $2$. Define $N_\beta^k(0)=0$, and for $g\geqslant 1$, set
\begin{equation*}
  N^k_\beta(g) \;:=\; \min\left\{ n\geqslant 1:\; \beta^n \equiv 0 \in \overline{\mathbb{V}}_g\otimes \Z/k \right\}.
\end{equation*}
In Section \ref{sec:surface} we will see that $\alpha^2 \equiv \beta \pmod 8$, and it follows, for example, that $2N_\beta^4(g)\geqslant N_\alpha^2(g)$. Further, $\beta^2-64$ is nilpotent in $\mathbb{V}_g$, and so $N_\alpha^2(g)$ is finite, as is $N^k_\beta(g)$ for $k$ a power of $2$ at most $64$. Our primary focus will be on the case $k=4$.\\

Given a definite lattice $\mathcal{L}$ we define a non-negative integer $m(\mathcal{L})$ as follows. For a subset $S\subset \mathcal{L}$ denote by $\text{Min}(S)$ the elements which have minimal absolute norm among elements in $S$. Note that $\text{Min}(S)$ is of even cardinality when it is not $\{0\}$ and when $S$ is closed under negation, for in this case multiplication by $-1$ acts freely. We call $w\in\mathcal{L}$ {\emph{extremal}} if it is of minimal absolute norm in its index two coset, i.e. $w\in \text{Min}(w+2\mathcal{L})$. If $\mathcal{L}=0$, set $m(\mathcal{L})=0$. Otherwise, define
\begin{equation}
  m(\mathcal{L}) \;:=\; \max\left\{ |w^2|-1:\; w\neq 0 \text{ extremal}, \; \tfrac{1}{2}\#\text{Min}(w+2\mathcal{L})\equiv 1 \text{ (mod 2)} \right\}. \label{eq:f}
\end{equation}
It is straightforward to show that $m(\mathcal{L})=0$ for a diagonal lattice. In many examples in the sequel we bound $m(\mathcal{L})$ from below, and in some cases compute it.

\vspace{0.25cm}

\begin{theorem}\label{thm:char4} Let X be a smooth, closed, oriented 4-manifold with no $2$-torsion in its homology and $b_2^+(X)= 1$. Let $\Sigma\subset X$ be a smooth, orientable and connected surface in $X$ of genus $g$ with self-intersection 1. Let $\mathcal{L}\subset  H^2(X;\Z)/{\rm{Tor}}$ be the unimodular negative definite lattice of vectors vanishing on $[\Sigma]$. Then we have the inequality
    \begin{equation}
         N_\beta^4(g)  \; \geqslant \; f_{4} (\mathcal{L}),\label{eq:ineqchar4}
    \end{equation}
where $f_{4}(\mathcal{L})$ is a non-negative integer invariant of the unimodular lattice $\mathcal{L}$ defined below in (\ref{eq:f4}), satisfying $f_4(\mathcal{L})\geqslant \lceil m(\mathcal{L})/2\rceil$, and which vanishes if and only if $\mathcal{L}$ is diagonalizable.
\end{theorem}

\vspace{0.25cm}

As mentioned in the introduction, the proof is an adaptation of the characteristic zero case in \cite{froyshov-inequality}. Replacing $\Z/4$ with other coefficient rings yields similar results, which we comment on below and at various points throughout the article. However, Theorem \ref{thm:char4} is all that is needed to prove Theorem \ref{thm:genus2}.\\

If $X$ is negative definite, the inequality above applies to $X\# \cp$ with the genus zero exceptional sphere; in this case, $\mathcal{L}$ is the lattice of $X$. The vanishing of the left side of the inequality forces $\mathcal{L}$ to be diagonal, implying Donaldson's diagonalization theorem \cite{d-connections} assuming that $X$ has no $2$-torsion in its homology. In fact, $m(\mathcal{L})$, which also vanishes if and only if $\mathcal{L}$ is diagonal (see Prop. \ref{prop:diag}), essentially appears in Fintushel and Stern's proof of Donaldson's theorem \cite{fsdiag}.\\

The effectiveness of the inequality in Theorem \ref{thm:char4} towards our applications comes from the determination of $N_\beta^4(g)$. In Section \ref{sec:surface} we give evidence that the relations $\alpha^g \equiv 0$ (mod 2) and $\beta^{\lceil g/2 \rceil}\equiv 0$ (mod 4) hold in $\mathbb{V}_g$ for all $g$. For our applications, we only need verify this for $g\leqslant 2$. 

\vspace{0.25cm}

\begin{prop}\label{prop:nilppartial}
    For $g\leqslant 128$ we have $N_\alpha^2(g)=g$ and $N_\beta^4(g)=\lceil g/2\rceil$.
\end{prop}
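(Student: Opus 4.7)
The plan is to combine the integral structure of $\mathbb{V}_g$ established in Section \ref{sec:surface} with Mu\~{n}oz's recursive presentation of $\overline{\mathbb{V}}_g \otimes \Q$ from \cite{munoz}, then reduce the computation of $N_\alpha^2(g)$ and $N_\beta^4(g)$ to a finite arithmetic problem for $g \leqslant 128$. Since $\mathbb{V}_g$ is torsion-free, reduction mod 2 and mod 4 is a clean base change from any integral presentation, so lifting Mu\~{n}oz's recursion to $\Z$ produces explicit integral polynomial relations in $\alpha$ and $\beta$ whose mod 2 and mod 4 reductions can be read off directly.

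First I would establish the upper bounds $N_\alpha^2(g) \leqslant g$ and $N_\beta^4(g) \leqslant \lceil g/2 \rceil$. This amounts to exhibiting the identities $\alpha^g \equiv 0 \pmod{2}$ and $\beta^{\lceil g/2\rceil} \equiv 0 \pmod{4}$ in $\overline{\mathbb{V}}_g$. For small $g$ these are produced by hand in Section \ref{sec:surface}; for larger $g$ up to $128$ one instead extracts them inductively from the integral Mu\~{n}oz recursion by tracking the 2-adic valuation of each coefficient of the lower-order monomials. A natural organization is to write the relation in genus $g$ as a polynomial combination of the genus $g-1$ and genus $g-2$ relations, and verify that the coefficient of $\alpha^{g-1}$ (respectively $\beta^{\lceil g/2\rceil - 1}$) and all further sub-leading terms are divisible by 2 (respectively 4).

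Second, for the lower bounds $N_\alpha^2(g) \geqslant g$ and $N_\beta^4(g) \geqslant \lceil g/2\rceil$, I would show that $\alpha^{g-1}$ and $\beta^{\lceil g/2\rceil - 1}$ remain nonzero after reduction. The idea is to pair these classes against a top-degree element of $\overline{\mathbb{V}}_g \otimes \Q$, whose structure is fully determined by \cite{munoz}, and verify that the resulting rational structure constant has 2-adic valuation equal to 0 (resp. exactly 1). Equivalently, one checks that in Mu\~{n}oz's basis over $\Q$, the integral representatives of $\alpha^{g-1}$ and $\beta^{\lceil g/2\rceil - 1}$ are not in $2\mathbb{V}_g$ (resp.\ $4\mathbb{V}_g$); this becomes, once again, a finite-step check along the recursion for each $g$ in the stated range.

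The main obstacle, and the reason the clean statement is cut off at $128$, is arithmetic: the 2-adic valuations of the coefficients appearing in the integral Mu\~{n}oz recursion do not appear to satisfy a uniform closed-form pattern in $g$, so the analysis has to be carried out case by case. This is exactly the ``arithmetic problem'' deferred to a later article and referenced in the introduction, and any structural simplification (for instance a clean description of the 2-adic integral structure of $\overline{\mathbb{V}}_g$) would presumably extend the proposition to all $g$. For the applications in this paper, however, only the $g = 1, 2$ cases are needed, and those can be checked by direct calculation in the low-genus presentation of $\mathbb{V}_g$.
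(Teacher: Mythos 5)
Your treatment of the upper bounds $N_\alpha^2(g)\leqslant g$ and $N_\beta^4(g)\leqslant\lceil g/2\rceil$ is essentially the paper's: one lifts Mu\~{n}oz's recursion for $\zeta_r$ to $\Z$ (legitimate since $\mathbb{V}_g$ is torsion-free, Proposition \ref{prop:torsion}), sets $\gamma=0$ to pass to $\overline{\mathbb{V}}_g$, and tracks 2-adic valuations of coefficients case by case up to $g=128$ by computer. The one ingredient you omit, and which is needed to make the mod 2 and mod 4 statements come out as clean powers of $\alpha$, is Lemma \ref{lemma:mod8}: one first proves $\beta\equiv\alpha^2$ (mod 8) and substitutes $\beta=\alpha^2+8\varepsilon$, and the precise claim (Conjecture \ref{conj:alpha}) is that $(2g-3)!!\,\zeta_g(\alpha,\alpha^2+8\varepsilon,\gamma)/g!$ is integral and congruent to $\pm\alpha^g$ mod 4; the relation $\beta^{\lceil g/2\rceil}\equiv 0$ (mod 4) is then deduced from $\alpha^g\equiv 0$ (mod 4) by substituting back. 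Without this normalization and substitution the raw coefficients of $\zeta_g/g!$ are not integral and the sub-leading terms do not organize themselves as you describe.

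The lower bounds are where your proposal has a genuine gap. You propose to check that $\alpha^{g-1}\notin 2\overline{\mathbb{V}}_g$ and $\beta^{\lceil g/2\rceil-1}\notin 4\overline{\mathbb{V}}_g$ by a pairing computation ``in Mu\~{n}oz's basis over $\Q$,'' carried out along the recursion. But Mu\~{n}oz's presentation determines only $\overline{\mathbb{V}}_g\otimes\Q$; the integral lattice $\mathbb{V}_g\subset\mathbb{V}_g\otimes\Q$ is \emph{not} the subring generated by $\alpha,\beta,\gamma$ (the paper points out, following \cite{ab}, that integrally more generators are required, and that the integral ring structure is substantially more complicated than the rational one). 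Consequently, whether $\alpha^{g-1}$ is divisible by $2$ in $\overline{\mathbb{V}}_g$ cannot be read off from the rational structure constants: an element can be indivisible in the subring generated by $\alpha,\beta$ yet divisible in the full lattice. The paper therefore obtains the lower bounds by an entirely different, topological route: Proposition \ref{prop:gamma} applies the inequalities of Theorems \ref{thm:char2} and \ref{thm:char4} to the Fintushel--Stern manifolds $X(g+1)$ containing a genus-$g$ surface of self-intersection $1$ with complementary lattice $\Gamma_{4g+4}$, for which $m(\Gamma_{4g+4})\geqslant g$ and $f_4(\Gamma_{4g+4})\geqslant\lceil g/2\rceil$; this forces $N_\alpha^2(g)\geqslant g$ and $N_\beta^4(g)\geqslant\lceil g/2\rceil$. (For the mod 2 statement alone there is an alternative algebraic route, but it goes through the explicit computation of $H^\ast(N_g;\Z/2)$ in \cite{ss} together with the fact that $\mathbb{V}_g\otimes\Z/2$ is a filtered deformation of that ring --- not through the rational recursion.) If you want a purely algebraic lower bound, you would need some such integral or mod 2 model of $\overline{\mathbb{V}}_g$ as input; the rational presentation alone does not suffice.
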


\vspace{0.25cm}

We will in fact reduce the verification of this proposition for general $g$ to an elementary arithmetic problem which we do not attempt to solve in this article. The threshold $g=128$ is insignificant, and is the extent to which we have verified the formulas with a computer.\\

A partial analogue of Theorem \ref{thm:char4} with $\beta$ replaced by $\alpha$, and with coefficients $\mathbb{Z}/4$ replaced by $\mathbb{Z}/2$, is obtained as follows. Below we will define a lattice invariant $f_2(\mathcal{L})$ which arises naturally when counting reducibles mod $2$ in instanton moduli spaces cut down by the divisor associated to the second Stiefel--Whitney class of the basepoint fibration. The invariant $f_2(\mathcal{L})$ satisfies
\begin{equation}\label{eq:sandwich}
	2f_4(\mathcal{L}) \geqslant  f_2(\mathcal{L})  \geqslant m(\mathcal{L}).
\end{equation}
Now assume that the hypotheses of Theorem \ref{thm:char4} hold. Then Theorem \ref{thm:char4}, the inequality \eqref{eq:sandwich}, and Proposition \ref{prop:nilppartial} together imply the inequality
\begin{equation}\label{eq:mod2speculation}
	g \geqslant f_2(\mathcal{L})
\end{equation}
if $g$ is even and at most $128$. If $g$ is odd we still have $g+1\geqslant f_2(\mathcal{L})$. However, our computations suggest the possibility that \eqref{eq:mod2speculation} is true for all $g$. In Section \ref{sec:proofs} we explain the issue with directly adapting the argument for Theorem \ref{thm:char4} to this case.\\

While the full generality of \eqref{eq:mod2speculation} is left open, we will see that the invariants $f_2(\mathcal{L})$ and $m(\mathcal{L})$ arise as sometimes more useful invariants than $f_4(\mathcal{L})$ in the setting of instanton homology with mod $2$ coefficients, as is explored in Sections \ref{sec:alt} and \ref{sec:e72}.\\

We now define the lattice terms $f_2(\mathcal{L})$ and $f_4(\mathcal{L})$. Let $\mathcal{L}$ be a definite unimodular lattice. Given $x,y\in\mathcal{L}$ write $x\cdot y\in \Z$ for their inner product, and $x^2$ for $x\cdot x$. For $w\in \mathcal{L}$ write $\mathcal{L}^w \subset \mathcal{L}$ for the sublattice of elements $x\in\mathcal{L}$ satisfying $w\cdot x\equiv 0$ (mod 2). Given $z\in\mathcal{L}$, define a linear form $L_z: \text{Sym}^\ast(\mathcal{L}) \to \Z$ by first letting $L_z(a_1\cdots a_m) =(z \cdot a_1)\cdots (z \cdot a_m)$ where each $a_i\in \mathcal{L}$, and then extending linearly over $\Z$. Next, define 
\begin{equation}
	f_2(\mathcal{L}) \; := \; \max\left\{ |w^2|-m-1: \;\; 2^{-m}\eta \equiv 1 \text{ (mod 2)} \right\} \; \in \Z_{\geqslant 0}\label{eq:f}
\end{equation}
where the maximum is over triples $(w,m,a)$ where $w\in\mathcal{L}$ is nonzero and extremal, $m\in\Z_{\geqslant 0}$, $a\in \text{Sym}^m (\mathcal{L}^w)$, and, as indicated in (\ref{eq:f}) above, $2^{-m}\eta(\mathcal{L},w,a,m)\equiv 1$ (mod 2), where
\begin{equation}
	\eta(\mathcal{L},w,a,m) \; := \; \frac{1}{2^{}}\sum_{z\in \text{Min}(w+2\mathcal{L})}(-1)^{((z+w)/2)^2} L_z(a).\label{eq:eta}
\end{equation}

In (\ref{eq:f}) we use the convention that $\max(\emptyset)=0$. The conditions $a\in \text{Sym}^m (\mathcal{L}^w)$ and $w\neq 0$ imply that $\eta(\mathcal{L},w,a,m)$ is an integer divisible by $2^m$. The signs appearing in $\eta$ do not actually matter for the definition of $f_2(\mathcal{L})$, but do matter for the definitions to follow. When $m=0$ we interpret $L_z(a)=1$; in this case we simply write $\eta(\mathcal{L},w)$. Note that when $\mathcal{L}$ is an \emph{even} lattice the signs appearing in $\eta$ are all positive. We remark that our definition of $\eta$ is essentially that of \cite{froyshov-equivariant} and one half of that in \cite{froyshov-inequality}, except that in those references, only $a=a_0^m$ is used. Note that $\eta(\mathcal{L},w)\equiv 1$ (mod 2) is equivalent to the condition that $\frac{1}{2}\#\text{Min}(w+2\mathcal{L})\equiv 1$ (mod 2), and thus $f_2(\mathcal{L})\geqslant m(\mathcal{L})$. We do not have an example for which $f_2(\mathcal{L})>m(\mathcal{L})$, but we include $f_2(\mathcal{L})$ in our discussions because whatever we can prove for $m(\mathcal{L})$ also holds for $f_2(\mathcal{L})$.\\

Moving on to the lattice term in the mod 4 setting, we define
\begin{equation}
	f_4(\mathcal{L}) \; := \; \max\left\{   \frac{|w^2|-m}{2}: \;\; 2^{-m_0}\eta \not\equiv 0 \text{ (mod 4)} \right\} \; \in \Z_{\geqslant 0}\label{eq:f4}
\end{equation}
where the maximum is over triples $(w,m,a)$ where $w\in\mathcal{L}$ is nonzero and extremal, $m\in\Z_{\geqslant 0}$ with $w^2\equiv m$ (mod 2), $a\in \text{Sym}^{m_0}(\mathcal{L}^w)\otimes \text{Sym}^{m_1} (\mathcal{L})$ with $m_0+m_1=m$, and $2^{-m_0}\eta(\mathcal{L},w,a,m)\not\equiv 0$ (mod 4), as is indicated in (\ref{eq:f4}). As claimed in \eqref{eq:sandwich}, we have
\begin{equation*}
    f_4(\mathcal{L})\geqslant \lceil f_2(\mathcal{L})/2\rceil.
\end{equation*}
This follows directly from the definitions if $f_2(\mathcal{L})=|w^2|-m-1$ for an extremal vector $w$ with $a\in\text{Sym}^m(\mathcal{L}^w)$ and $\eta(\mathcal{L},w,a,m)\equiv 1$ (mod 2) where $w^2-m$ is even. If instead $w^2-m$ is odd, we use that $\eta(\mathcal{L},w,va,1+m)\equiv \eta(\mathcal{L},w,a,m) \equiv 1$ (mod 2) for any vector $v\in \mathcal{L}$ with $v\cdot w$ odd.

\vspace{0.25cm}

\begin{prop}\label{prop:diag}
    Each of $m(\mathcal{L})$, $f_2(\mathcal{L})$, $f_4(\mathcal{L})$ vanish if and only if $\mathcal{L}$ is diagonalizable.
\end{prop}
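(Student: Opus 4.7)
The proof naturally splits into the forward direction (diagonalizability implies all three invariants vanish) and the backward direction (vanishing implies diagonalizability). The inequalities $f_4(\mathcal{L})\geqslant \lceil f_2(\mathcal{L})/2\rceil$ and $f_2(\mathcal{L})\geqslant m(\mathcal{L})$, both noted above, combine to show that the vanishing of either $f_4$ or $f_2$ forces $m$ to vanish, so the backward direction reduces to proving $m(\mathcal{L})=0 \Rightarrow \mathcal{L}$ diagonalizable. Similarly the forward direction reduces to checking $f_4(\langle\pm 1\rangle^n)=0$, though it is cleaner to verify all three directly.

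For the forward direction, let $\mathcal{L}=\langle\pm 1\rangle^n$ with standard orthonormal basis, and take any extremal $w$ with support $I$ of size $k=|w^2|$. The coset $\text{Min}(w+2\mathcal{L})$ is parameterized by $\varepsilon\in\{0,1\}^I$ via independent sign flips on $I$, so $\#\text{Min}=2^k$; this alone gives $m(\mathcal{L})=0$, since $\tfrac{1}{2}\cdot 2^k$ is odd only when $k=1$, in which case $|w^2|-1=0$. For $\eta(\mathcal{L},w,a,m)$ with $a=a_1\cdots a_m$, I would expand $L_z(a)$ as a product over $j$ of linear forms on $\{0,1\}^I$, interchange the sums, and observe that the inner character sum over $\varepsilon$ factors over $i\in I$ into factors $1+(-1)^{1+n_i}$, where $n_i$ counts the appearances of index $i$ among the multi-indices arising from expanding the product. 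This is nonzero only when every $n_i$ is odd, which forces $m\geqslant k$. A positive contribution to $f_2$ or $f_4$ requires $m\leqslant k-2$ (for $f_4$ the parity constraint $w^2\equiv m \pmod 2$ rules out $m=k-1$), contradicting $m\geqslant k$; hence $f_2(\mathcal{L})=f_4(\mathcal{L})=0$.

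For the backward direction, I would induct on the rank. If $\mathcal{L}$ contains a vector $e$ of absolute norm $1$, then $\langle e\rangle$ splits off as a unimodular summand, $\mathcal{L}=\langle e\rangle\oplus e^\perp$, and any nonzero extremal $w'\in e^\perp$ with $\tfrac{1}{2}\#\text{Min}(w'+2e^\perp)$ odd lifts to the same vector $w=w'$ in $\mathcal{L}$, whose minimum coset is unchanged because any modification by $2ce$ strictly increases the absolute norm. Thus $m(e^\perp)\leqslant m(\mathcal{L})=0$, and by induction $e^\perp$, hence $\mathcal{L}$, is diagonal. Otherwise $\mathcal{L}$ has no norm-$1$ vector, and I claim $\mathcal{L}=0$. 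If not, take $w$ of minimum absolute norm $\mu\geqslant 2$: any nonzero $v\in \mathcal{L}$ with $|w+2v|^2=|w|^2$ satisfies $w\cdot v=-v\cdot v$, Cauchy--Schwarz gives $|v^2|\leqslant \mu$, minimality forces $|v^2|=\mu$, and the Cauchy--Schwarz equality case together with primitivity of $w$ (automatic from its minimality) forces $v=-w$. Hence $\text{Min}(w+2\mathcal{L})=\{\pm w\}$, $\tfrac{1}{2}\#\text{Min}=1$ is odd, and $m(\mathcal{L})\geqslant \mu-1\geqslant 1$, contradicting $m(\mathcal{L})=0$. The main obstacle is this no-norm-$1$ Cauchy--Schwarz step; the forward character-sum calculation and the splitting induction are otherwise routine.
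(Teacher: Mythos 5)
Your proof is correct. For the hard direction (vanishing implies diagonalizable) your argument is essentially the paper's: both reduce, via $f_4(\mathcal{L})\geqslant \lceil f_2(\mathcal{L})/2\rceil\geqslant\lceil m(\mathcal{L})/2\rceil$, to showing that a non-diagonalizable $\mathcal{L}$ has $m(\mathcal{L})\geqslant 1$, and both do so by taking a minimal-norm vector $w$ in the reduced part and showing $\text{Min}(w+2\mathcal{L})=\{\pm w\}$. The paper does this in one step, writing $\mathcal{L}=\langle +1\rangle^n\oplus L$ and bounding $(w-v)^2\geqslant 4w^2$ directly; you instead split off norm-one vectors one at a time by induction (correctly checking that $\text{Min}$ of the relevant coset is unaffected by the splitting, since adding $2ce$ strictly increases absolute norm) and then apply Cauchy--Schwarz in the reduced lattice --- note the equality analysis there does not actually need primitivity of $w$, only that $v$ is a real multiple of $w$ of equal norm. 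Where you genuinely add something is the easy direction: the paper merely asserts that the vanishing for $\langle\pm 1\rangle^n$ ``may be proved by direct computation'' or deduced by applying Theorems \ref{thm:char2} and \ref{thm:char4} to $\cp\# k\overline{\cp}$, whereas you carry out the computation. Your character-sum factorization $\prod_{i\in I}\bigl(1+(-1)^{1+n_i}\bigr)$, which forces $\eta(\mathcal{L},w,a,m)=0$ unless every $n_i$ is odd and hence $m\geqslant |w^2|$, is a clean, purely lattice-theoretic way to see that $f_2$ and $f_4$ vanish on diagonal lattices, and it keeps the proposition independent of the gauge-theoretic theorems.
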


\vspace{0.15cm}

\begin{proof}
    Assume for simplicity that $\mathcal{L}$ is positive definite, and write $\mathcal{L}=\langle +1 \rangle^n\oplus L$ where $L$ contains no vectors of square $1$. If $\mathcal{L}$ is non-diagonalizable, then $L\neq 0$. Let $w\in L$ be of minimal nonzero norm in $L$. Suppose $v\in w+2\mathcal{L}$ and $v\neq \pm w$. Without loss of generality suppose $v\cdot w \geqslant 0$. Then $(w-v)^2= w^2 - 2w\cdot v + v^2 \leqslant w^2 + v^2$. On the other hand, $w-v \in 2L-\{0\}$, and so $(w-v)^2 \geqslant 4w^2$ since $w$ is minimal in $L-\{0\}$. We obtain $v^2 \geqslant 3w^2$. We conclude that $v\not\in\text{Min}(w+2\mathcal{L})$ and $\text{Min}(w+2\mathcal{L})=\{w,-w\}$, and thus $m(\mathcal{L})\geqslant w^2-1\geqslant 1$, and them same holds for $f_2(\mathcal{L})$ and $f_4(\mathcal{L})$ by \eqref{eq:sandwich}. The converse may be proved by direct computation, or we can apply Theorem \ref{thm:char4} with $X=\cp\# k\overline{\cp}$ and $\Sigma$ the exceptional sphere in $\cp$ to obtain that $f_2(\mathcal{L})=f_4(\mathcal{L})=m(\mathcal{L})=0$ for the diagonal lattice $\mathcal{L}=\langle +1 \rangle^k$.
\end{proof}

\vspace{0.25cm}

For the proof of Theorem \ref{thm:genus1} we will also make use of one inequality which arises from instanton constructions in the setting of mod $8$ coefficients.

\vspace{0.25cm}

\begin{prop}\label{prop:mod8}
	Define $f_8(\mathcal{L})$ by replacing ``{\emph{mod $4$}}'' in the definition of $f_4(\mathcal{L})$ with ``{\emph{mod $8$}}''. Assume the hypotheses of Theorem \ref{thm:char4}, and that $g=1$. Then $f_8(\mathcal{L})\in \{0,1\}$.
\end{prop}

\vspace{0.25cm}

The proof is similar to that of Theorem \ref{thm:char4}, with the additional input that $N^8_\beta(g)=1$.\\

The lattice terms appearing above should be compared to the analogous term appearing in Fr\o yshov's inequality for the instanton $h$-invariant, which is defined in the setting of $\Q$-coefficients. We now recall his result. In fact, we will state a slightly more general result. We define for a definite unimodular lattice $\mathcal{L}$ the following quantity:
\begin{equation}
	e_0(\mathcal{L}) \; := \; \max\left\{  \left\lceil \frac{|w^2|-m}{4} \right\rceil: \;\; \eta \neq 0 \right\} \; \in \Z_{\geqslant 0}\label{eq:e}
\end{equation}
where the maximum is over triples $(w,m,a)$ where $w\in\mathcal{L}$ is extremal, $m\in\Z_{\geqslant 0}$, $w^2\equiv m$ (mod 2), $a\in \text{Sym}^m(\mathcal{L})$, and $\eta(\mathcal{L},w,a,m)\neq 0$, as is abbreviated in (\ref{eq:e}). From the definitions we have
\begin{equation*}
    e_0(\mathcal{L})\; \geqslant \; \lceil f_4(\mathcal{L})/2 \rceil \; \geqslant\; \lceil f_2(\mathcal{L})/4\rceil.
\end{equation*}
Denote by $h(Y)$ Fr\o yshov's instanton $h$-invariant defined in \cite{froyshov-equivariant}. We next define
\begin{equation*}
  N^0_\beta(g) \;:=\; \min\left\{ n\geqslant 1:\; (\beta^2 - 64)^{n} = 0 \in \overline{\mathbb{V}}_g\otimes \Q \right\}
\end{equation*}
for $g\geqslant 1$ and $N^0_\beta(0)=0$. The computation $N_\beta^0(g)\leqslant \lceil g/2 \rceil$ due to Mu\~{n}oz \cite[Prop.20]{munoz} is used in Fr\o yshov's inequality, and determines the left hand sum in the following.

\vspace{0.25cm}

\begin{theorem}[cf. \cite{froyshov-inequality} Thm.2]\label{thm:char0} Let X be a smooth, compact, oriented 4-manifold with homology 3-sphere boundary $Y$ and $b_2^+(X)=n\geqslant 1$. Let $\Sigma_i\subset X$ for $1\leqslant i \leqslant n$ be smooth, orientable, connected surfaces in $X$ of genus $g_i$ with $\Sigma_i\cdot\Sigma_i=1$ which are pairwise disjoint. Denote by $\mathcal{L}\subset  H^2(X;\Z)/\text{\emph{Tor}}$ the unimodular lattice of vectors vanishing on the classes $[\Sigma_i]$. Then
    \begin{equation}
         h(Y) +\sum_{i=1}^n \lceil g_i/2 \rceil  \; \geqslant \; e_0(\mathcal{L}).\label{eq:ineqchar0}
    \end{equation}
\end{theorem}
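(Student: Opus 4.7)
The plan is to carry out Fr\o yshov's argument from \cite{froyshov-inequality}; the sketch below is essentially a paraphrase. Fix a triple $(w,m,a)$ realizing the maximum in \eqref{eq:e}, and set $e = \lceil (|w^2|-m)/4\rceil$. The goal is to produce a relative Donaldson invariant for $(X,Y)$ whose non-vanishing forces $h(Y) + \sum_i \lceil g_i/2\rceil \geqslant e$.

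First I would choose an $SO(3)$-bundle $P \to X$ with $w_2(P)$ the mod-$2$ reduction of $w$, and an instanton number selected so that, after cutting down the moduli space $\mathcal{M}(P,\alpha)$ of ASD connections on $X$ with cylindrical end on $Y$ (asymptotic to a flat connection $\alpha$) by $\mu(a) \cdot \prod_i \mu(\Sigma_i)^{g_i} \cdot \beta^{N}$ for an appropriate $N$, the formal dimension vanishes. Matching dimensions gives $N$ of order $e - \sum_i \lceil g_i/2\rceil$ plus bounded corrections from the surface contributions.

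Second, I would stretch the neck to analyze the invariant. The contribution from $X$ localizes on reducible connections parametrized by the elements $z$ of the coset $w + 2\mathcal{L}$; under the dimension constraint the contributing reducibles lie in $\text{Min}(w + 2\mathcal{L})$. Each contributes a sign $(-1)^{((z+w)/2)^2}$ (Fr\o yshov's orientation convention), and evaluation of $\mu(a)$ at the reducible indexed by $z$ gives the factor $L_z(a)$; summing yields exactly $\eta(\mathcal{L}, w, a, m)$, nonzero by hypothesis. On the surface side, the Mu\~{n}oz relation $(\beta^2 - 64)^{\lceil g_i/2\rceil} = 0$ in $\overline{\mathbb{V}}_{g_i}\otimes \Q$ bounds by $\lceil g_i/2\rceil$ the number of point-class cut-downs each $\Sigma_i$-neighborhood can absorb non-trivially, producing the $\sum_i \lceil g_i/2\rceil$ summand on the left.

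Third, non-vanishing of the full invariant forces the relevant Floer class at $Y$ to survive in the reduced Floer complex at a grading level at most $h(Y)$, producing the $h(Y)$ summand and the claimed inequality. The main obstacle is the sign and localization analysis: verifying that the reducible contributions assemble exactly into $\eta$ with the signs $(-1)^{((z+w)/2)^2}$, and that the neck-stretching cleanly extracts $h(Y)$ rather than some weaker invariant of $Y$. Both points are handled carefully in \cite{froyshov-inequality}; the adaptation here mainly amounts to observing that the argument only uses $\mathcal{L} = H^2(X;\Z)/\text{Tor}$, so torsion in $H^2(X;\Z)$ plays no role.
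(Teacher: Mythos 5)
Your sketch is a faithful paraphrase of Fr\o yshov's proof of Theorem 2 in \cite{froyshov-inequality}, but the statement here is strictly more general than that theorem, and you neither supply nor identify the new ingredient. Fr\o yshov's Theorem 2 carries the restriction that all but one of the surfaces $\Sigma_i$ have genus 1; the version stated here allows every $g_i$ to be arbitrary. The issue arises exactly at the step you describe as ``the Mu\~noz relation bounds the number of point-class cut-downs each $\Sigma_i$-neighborhood can absorb'': when one analyzes the ends of the cut-down one-manifold as a neck parameter $t_i\to\pm\infty$, gluing theory produces a pairing $\phi_i^\pm\cdot\psi_i^\pm$ of relative invariants for $S^1\times\Sigma_i^\pm$, and these are a priori only Floer \emph{cochains}, not cohomology classes, so the ring relation $(\beta^2-64)^{\lceil g_i/2\rceil}=0$ in $\overline{\mathbb{V}}_{g_i}\otimes\Q$ cannot be applied to them directly. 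The paper's fix is to perturb the Chern--Simons functional on $S^1\times\Sigma_i^\pm$ using a perfect Morse function on the moduli space $N_{g_i}$ of projectively flat connections (Thaddeus \cite{thaddeus}), so that the Floer cochain complex has vanishing differential and $\phi_i^\pm,\psi_i^\pm$ are genuine classes in $\mathbb{V}'_{g_i}$ for every genus. Without this (or Fr\o yshov's genus-1 restriction) your argument does not close.

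You also misidentify where the adaptation lies: you say it ``mainly amounts to observing that torsion in $H^2(X;\Z)$ plays no role.'' That observation is correct but vacuous here --- the reducible count is $\#\mathcal{T}$ times a power of two times $\eta(\mathcal{L},w,a,m)$, which is nonzero over $\Q$ regardless of torsion, and Theorem \ref{thm:char0} accordingly carries no torsion hypothesis (unlike Theorems \ref{thm:char2} and \ref{thm:char4}). Two smaller imprecisions: the surfaces do not enter via cutting down by $\mu(\Sigma_i)^{g_i}$ but via an $n$-parameter family of metrics stretching along links $S^1\times\Sigma_i^\pm$ (after internally blowing up each $\Sigma_i$), with the classes $(\mu(x)_i^2-64)^{n_i}$ localized near those necks; and the reducibles are not found by neck-stretching --- they are cone singularities of $\mathscr{M}_0$ whose links $\mathbb{CP}^d$ contribute the \emph{boundary} points of the one-manifold, while neck-stretching produces its \emph{ends}.
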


\vspace{0.25cm}

We have lifted the restriction in \cite{froyshov-inequality} that all but one of the surfaces have genus 1. This follows from a minor technical improvement of the proof, which uses the existence of a perfect Morse function on the moduli space of projectively flat $U(2)$ connections on a surface with fixed odd determinant. This is explained in Section \ref{sec:proofs}.\\

Each of the lattice terms defined above arises from adapting the proof of Fr\o yshov's inequality; each such adaptation has a choice of coefficient ring, a corresponding relation in the instanton cohomology ring of a circle times a surface, and a possible assumption on the torsion group $\mathcal{T}\subset H^2(X;\Z)$ of the 4-manifold. We summarize the expected scheme for some of the cases above:

\vspace{0.25cm}

\begin{center}
\renewcommand{\arraystretch}{1.5}
\begin{tabular}{ c c c c }
    Lattice term \quad & Coefficients\quad  & Relation \quad & Torsion assumption\\
    \hline
  $e_0(\mathcal{L})$ & $\Q$  & $(\beta^2-64)^{\lceil g/2\rceil} = 0$  & none\\
  $f_2(\mathcal{L})$ & $\Z/2$ & $\alpha^g \equiv 0$ (mod 2) & $2\nmid \#\mathcal{T}$ \\
  $f_4(\mathcal{L})$ & $\Z/4$ & $\beta^{\lceil g/2 \rceil} \equiv 0$ (mod 4) &  $2 \nmid \#\mathcal{T}$ \\
    $\lceil f_2(\mathcal{L})/2 \rceil$ & $\Z/4$ & $\beta^{\lceil g/2 \rceil} \equiv 0$ (mod 4) &  $4 \nmid \#\mathcal{T}$ \\
\end{tabular}

\end{center}

\vspace{0.25cm}

The relations are to be understood within $\overline{\mathbb{V}}_g$, although we expect the mod 2 and mod 4 relations, which as listed are only verified for $g\leqslant 128$ in this paper, to hold in $\mathbb{V}_g$. The first row corresponds to Theorem \ref{thm:char0}, the second row to inequality \eqref{eq:mod2speculation} (which is established for $g$ even and $g\leqslant 128$), and the third row to Theorem \ref{thm:char4}. The fourth row is the result of slightly relaxing the torsion assumption in the proof of Theorem \ref{thm:char4}. However, we will make no use of it and will not mention it further.\\

We have only included in our discussion the variations of Fr\o yshov's inequality we have found useful for our applications. However, the proof of Theorem \ref{thm:char0} is easily adapted to any coefficient ring. We discuss this to some extent in Section \ref{sec:oddchar}.\\

In Section \ref{sec:e72} we show that the indecomposable unimodular positive definite lattice of rank 14 has $f_4(\mathcal{L})=2$, while $e_0(\mathcal{L})=1$ and $f_2(\mathcal{L})=2$. This example shows the necessity of the inequality associated to mod 4 coefficients in proving Theorem \ref{thm:genus2}.

\section{Genus 1 applications}\label{sec:genus1}

In this section we prove Theorem \ref{thm:genus1} and Corollary \ref{cor:2,3} assuming the results of Section \ref{sec:latticeterms}, and using Heegaard Floer $d$-invariants. Next section we will show that these results can be proved without Heegaard Floer theory, using only our instanton obstructions. We begin with a corollary of our inequalities that follows \cite[Cor. 1]{froyshov-inequality}.\\

For a knot $K$ in an integer homology 3-sphere $Y_0$, we define $g_{4,2}(K)$ to be the minimum over all $g\geqslant 0$ such that there exists a $\Z/2$-homology 4-ball $W$ with $\partial W=Y_0$ and an oriented, genus $g$ surface $\Sigma$ smoothly embedded in $W$ with $\partial \Sigma = K$. If no such data exists, we set $g_{4,2}(K)=\infty$. If $K$ is a knot in the 3-sphere, note $g_{4,2}(K)\leqslant g_4(K)$, the latter being the smooth 4-ball genus of $K$.

\vspace{0.25cm}

\begin{corollary}\label{cor:link}
    Let $Y$ be an integer homology 3-sphere resulting from $(-1)$-surgery on a knot $K$ in an integer homology 3-sphere. Suppose $Y$ bounds a smooth, compact, oriented 4-manifold $X$ with no 2-torsion in its homology and negative definite intersection form $\mathcal{L}$. If $g_{4,2}(K)\leqslant 128$, then
    \[
        f_4(\mathcal{L})\leqslant \lceil g_{4,2}(K)/2\rceil.
    \]
Furthermore, if $g_{4,2}(K)=1$, then $f_8(\mathcal{L})\in \{0,1\}$. 
\end{corollary}

\vspace{0.25cm}

To obtain the corollary, let $Z$ be the orientation-reversal of the negative definite surgery cobordism from $Y_0$ to $Y$. Then apply Theorem \ref{thm:char4} to the closed 4-manifold $X\cup_{Y} Z \cup_{Y_0}  W $, which has a surface of self-intersection $1$ formed by capping off the component of a surface $\Sigma\subset W$ bounded by $K$ as above with a disk from the 2-handle of the surgery cobordism $Z$. Proposition \ref{prop:nilppartial} determines the left hand side of (\ref{eq:ineqchar4}) for $g\leqslant 128$, and the inequality for $f_4(\mathcal{L})$ follows. Apply Proposition \ref{prop:mod8} to obtain the last statement regarding $f_8(\mathcal{L})$.\\

We recall some basic notions from the theory of lattices. Let us call a definite lattice {\emph{reduced}} if there are no elements of squared norm $\pm 1$. A {\emph{root}} in a reduced definite lattice $\mathcal{L}$ is an element with square $\pm 2$. A {\emph{root lattice}} is a reduced positive definite lattice generated by its roots. Examples are $\A_n,\Ds_n,\E_6,\E_7$ and $\E_8$, each associated to a Dynkin diagram:
\begin{center}
\begin{tabular}{ll}

\begin{tikzpicture}
	
	\draw (0,0) -- (1.3,0);
	\draw (1.7, 0) -- (2.5,0);
	
	\draw[fill=white] (0,0) circle (.1);
	\draw[fill=white] (.5,0) circle (.1);
	\draw[fill=white] (1,0) circle (.1);
	\draw[fill=white] (2,0) circle (.1);
	\draw[fill=white] (2.5,0) circle (.1);
	
	\node at (1.5,0) {$\cdots$};
	\node at (-1,0) {$\A_n$};
	 
\end{tikzpicture}

&
\qquad

\begin{tikzpicture}

	\draw (0,0) -- (2,0);
	\draw (1,0) -- (1,.5);
	
	\draw[fill=white] (0,0) circle(.1);
	\draw[fill=white] (.5,0) circle(.1);
	\draw[fill=white] (1,0) circle(.1);
	\draw[fill=white] (1,.5) circle(.1);
	\draw[fill=white] (1.5,0) circle(.1);
	\draw[fill=white] (2,0) circle(.1);
	
	\node at (-1,0) {$\E_6$};
	 
\end{tikzpicture}

\\

\begin{tikzpicture}

	\draw (0,0) -- (1.3,0);
	\draw (1.7, 0) -- (2,0);
	\draw (2,0) -- (2.5,-.25);
	\draw (2,0) -- (2.5,.25);
	
	\draw[fill=white] (0,0) circle(.1);
	\draw[fill=white] (.5,0) circle(.1);
	\draw[fill=white] (1,0) circle(.1);
	\draw[fill=white] (2,0) circle(.1);
	\draw[fill=white] (2.5,-.25) circle(.1);
	\draw[fill=white] (2.5,.25) circle(.1);
	
	\node at (1.5,0) {$\cdots$};
	\node at (-1,0) {$\Ds_n$};
	 
\end{tikzpicture}

&
\qquad

\begin{tikzpicture}

	\draw (0,0) -- (2.5,0);
	\draw (1,0) -- (1,.5);
	
	\draw[fill=white] (0,0) circle(.1);
	\draw[fill=white] (.5,0) circle(.1);
	\draw[fill=white] (1,0) circle(.1);
	\draw[fill=white] (1,.5) circle(.1);
	\draw[fill=white] (1.5,0) circle(.1);
	\draw[fill=white] (2,0) circle(.1);
	\draw[fill=white] (2.5,0) circle(.1);
	
	\node at (-1,0) {$\E_7$};
	 
\end{tikzpicture}

\\

&
\qquad

\begin{tikzpicture}

	\draw (0,0) -- (3,0);
	\draw (1,0) -- (1,.5);
	
	\draw[fill=white] (0,0) circle(.1);
	\draw[fill=white] (.5,0) circle(.1);
	\draw[fill=white] (1,0) circle(.1);
	\draw[fill=white] (1,.5) circle(.1);
	\draw[fill=white] (1.5,0) circle(.1);
	\draw[fill=white] (2,0) circle(.1);
	\draw[fill=white] (2.5,0) circle(.1);
	\draw[fill=white] (3,0) circle(.1);
	
	\node at (-1,0) {$\E_8$};
	 
\end{tikzpicture}

\end{tabular}
\end{center}
The root lattice is obtained by taking as basis the vertices, each having square $2$; if two vertices are joined by an edge, their inner product is $-1$, and is otherwise $0$. For $\A_n$ we require $n\geqslant 1$, and for $\Ds_n$, $n\geqslant 4$. In each case, $n$ is the number of vertices, or the rank of the lattice. It is well-known that any positive definite root lattice can be written as a direct sum of these given lattices.\\

To simplify the notation below, we assume henceforth that $\mathcal{L}$ is a positive definite unimodular lattice. Any such lattice can be written as $\mathcal{L}=\langle +1\rangle^n\oplus L$ where $L$ is reduced and $n\geqslant 0$. We write $R\subset L$ for the root lattice generated by the roots of $L$, and also call $R$ the {\emph{root lattice of }}$\mathcal{L}$. In general, $\mathcal{L}$ is not determined by $R$, but it is common in many cases to notate $L$ by the data $R$, cf. \cite[Ch.16]{conwaysloane}. For example, we write $A_{15}$ for the rank 15 unimodular positive definite lattice whose root lattice $R$ is isomorphic to $\A_{15}$. For this reason we have used different fonts for unimodular lattices and root lattices, although $E_8=\E_8$. The presence of an ``$O$'' indicates an empty root lattice; for example, the lattice $O_{23}$, called the shorter Leech lattice, has no roots.

\vspace{0.25cm}

\begin{lemma}\label{lemma:indecomp}
    If $f_4(\mathcal{L})=1$ then the root lattice $R\subset \mathcal{L}$ is indecomposable.
\end{lemma}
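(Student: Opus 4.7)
The plan is to prove the contrapositive: if $R = R_1 \oplus R_2$ is a nontrivial orthogonal decomposition into root lattices, then $f_4(\mathcal{L}) \geq 2$. Pick a root $r_i \in R_i$ for each $i$ and set $w := r_1 + r_2$, so $w^2 = 4$. The triple $(w,m,a) = (w,0,1)$ will witness $f_4(\mathcal{L}) \geq (|w^2|-m)/2 = 2$, provided $w$ is nonzero extremal and $\eta(\mathcal{L},w) \not\equiv 0 \pmod{4}$.

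Extremality follows quickly. For $v = w + 2u \in w + 2\mathcal{L}$ one has $v^2 = 4 + 4(w\cdot u + u^2) \equiv 0 \pmod{4}$, and $v = 0$ would force $w/2 \in \mathcal{L}$ of norm $1$, hence $w/2 \in \langle +1\rangle^n$, contradicting the fact that $w \in L$ is orthogonal to $\langle +1\rangle^n$. So $v^2 \geq 4$ with equality at $v = w$, making $w$ extremal. (Writing $w = 2x$ for any $x \in \mathcal{L}$ is also how one checks $w$ is primitive, which will be needed below.)

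For the $\eta$ computation, parameterize $v \in \text{Min}(w + 2\mathcal{L})$ by $u \in S := \{u \in \mathcal{L} : u^2 + w\cdot u = 0\}$. The substitution $(v+w)/2 = w + u$ combined with $u^2 = -w\cdot u$ simplifies the sign in $\eta$ to $(-1)^{w\cdot u}$, so $\eta(\mathcal{L},w) = \tfrac{1}{2}\sum_{u \in S}(-1)^{w\cdot u}$. The inequalities $u^2 \geq 0$ and $(w+u)^2 = 4 - u^2 \geq 0$ confine $u^2 \in \{0,1,2,3,4\}$. The cases are then: $u^2 = 0$ yields $u = 0$; $u^2 = 1$ is empty because any norm-$1$ vector lies in $\langle +1\rangle^n \perp w$; $u^2 = 2$ makes $u$ a root, which (since root lattices are even, and since norm-$2$ vectors of mixed $L$-plus-$\langle +1\rangle^n$ type or of pure $\langle +1\rangle^n$ type are excluded by $L$ reduced and $w\cdot u \neq 0$) must lie in $R_1$ or $R_2$, and Cauchy--Schwarz applied to $w\cdot u = -2$ pins $u \in \{-r_1, -r_2\}$; $u^2 = 4$ together with $w\cdot u = -4$ forces Cauchy--Schwarz equality, giving $u = -w$ (using primitivity of $w$).

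The main obstacle I anticipate is eliminating $u^2 = 3$, where Cauchy--Schwarz only gives $(w\cdot u)^2 \leq 12$ and so permits $w\cdot u = -3$. To handle it I use the involution $u \mapsto -w - u$ on $S$: a direct check shows this map preserves $S$, and if $u \in S$ has $u^2 = 3$ then $(-w-u)^2 = 4 - 6 + 3 = 1$, producing an element of $S$ of square $1$, contradicting the already-established emptiness of the $u^2 = 1$ case. Hence $|S| = 4$; since the inner products $w\cdot u \in \{0,-2,-2,-4\}$ on $S$ are all even, every sign is $+1$, giving $\eta(\mathcal{L},w) = 2 \not\equiv 0 \pmod{4}$. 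This completes the argument.
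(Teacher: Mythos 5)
Your proof is correct and takes essentially the same route as the paper: both use the witness $w=r_1+r_2$ for orthogonal roots in distinct components of $R$, establish $\text{Min}(w+2\mathcal{L})=\{\pm r_1\pm r_2\}$, and conclude $\eta(\mathcal{L},w)=2\not\equiv 0 \pmod 4$, hence $f_4(\mathcal{L})\geqslant w^2/2=2$. The only difference is bookkeeping in identifying the minimal vectors --- the paper reduces to $\text{Min}(w+2R)$ via a norm inequality using that $L-R$ contains only vectors of square $\geqslant 3$, while you enumerate directly by the norm of $u$ (the involution $u\mapsto -w-u$ ruling out $u^2=3$ is a nice touch) --- but the substance is identical.
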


\vspace{0.15cm}

\begin{proof} Write $\mathcal{L}=\langle +1\rangle^n\oplus L$ as above, so that $R\subset L$. Suppose $w\in R$ is extremal in $R$ and $w^2=4$. We first claim that $\text{Min}(w+2\mathcal{L})=\text{Min}(w+2R)$. Let $v\in \text{Min}(w+2\mathcal{L})$ with $v\not\in R$, and suppose without loss of generality that $w\cdot v\geqslant 0$. Then $(w-v)^2 = 4 - 2w\cdot v + v^2 \leqslant 4+ v^2$. On the other hand, $w-v\in 2(L-R)$ implies $(w-v)^2 \geqslant 4\cdot 3 = 12$, since $L-R$ has vectors only of square $\geqslant 3$. Thus $v^2 \geqslant 8$, contradicting the assumption that $v$ is extremal. This proves the claim.\\

Now suppose $R$ is decomposable, i.e. $R=R_1\oplus R_2$. Then there are $u\in R_1$ and $v\in R_2$ both of square $2$. Set $w=u+v\in R$, which has $w^2=4$. Then $\text{Min}(w+2\mathcal{L})=\{\pm u \pm v\}$ contains 4 elements, and $\eta(\mathcal{L},w)=2\not\equiv 0$ (mod 4). Thus $f_4(\mathcal{L})\geqslant w^2/2  =2$.
\end{proof}
\vspace{0.25cm}

The following lemma is not needed for what follows, but serves as a warmup for the next section. Furthermore, it will be used in Section \ref{sec:alt} to give an alternate proof of Corollary \ref{cor:2,3}.

\vspace{0.25cm}

\begin{lemma}\label{lemma:e8}
    If $m(\mathcal{L})=1$ and $R\subset\mathcal{L}$ is indecomposable, then $\mathcal{L}= E_8\oplus \langle +1 \rangle^n$ for some $n\geqslant 0$.
\end{lemma}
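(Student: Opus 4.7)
The plan is in two stages: first, show that the root lattice $R$ of $\mathcal{L}$ must be exactly $\E_8$; second, deduce that once $R=\E_8$, the orthogonal complement of the $E_8$-summand in $\mathcal{L}$ is diagonal.

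For the first stage, since $m(\mathcal{L})\geqslant 1$ the lattice $\mathcal{L}$ contains a root, so $R\neq 0$. Suppose for contradiction that $R$ is indecomposable and not of type $\E_8$. For each candidate $R$ that actually arises as the root system of a reduced unimodular lattice $L$, I would exhibit a nonzero extremal $w\in\mathcal{L}$ with $|w|^{2}\geqslant 3$ and $\tfrac{1}{2}\#\text{Min}(w+2\mathcal{L})$ odd, forcing $m(\mathcal{L})\geqslant 2$. The cleanest example is $R=\D_{12}$, arising in $\Gamma_{12}=D_{12}^{+}$: the glue vector $w=\tfrac{1}{2}(1,\dots,1)$ has $|w|^{2}=3$, and a direct coset computation (using that vectors of $\D_{12}$ have integer coordinates and those of the glue coset have half-integer coordinates) gives $\text{Min}(w+2\mathcal{L})=\{\pm w\}$, yielding the desired contradiction. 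Analogous glue vectors of square $\geqslant 3$ handle $R=\A_{15}$ (in $A_{15}$) and the other indecomposable types that appear in reduced unimodular lattices. The small indecomposable types $\A_{1},\A_{2},\E_{6},\E_{7}$ cannot appear as the full root system of a reduced unimodular lattice without there being a vector $v$ of square $\geqslant 3$ in the orthogonal complement $R^{\perp}\cap L$; for such $v$, a Cauchy--Schwarz argument as in the proof of Proposition~\ref{prop:diag} yields $\text{Min}(v+2\mathcal{L})=\{\pm v\}$ and the same conclusion $m(\mathcal{L})\geqslant 2$.

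For the second stage, $R=\E_{8}$ together with the unimodularity of $E_8$ gives an orthogonal splitting $\mathcal{L}=E_{8}\oplus N$ with $N$ unimodular and rootless. Write $N=\langle +1\rangle^{n}\oplus M$ with $M$ reduced, so every nonzero vector of $M$ has square $\geqslant 3$. If $M\neq 0$, let $v\in M$ have minimum square $k\geqslant 3$, and consider any $v'\in\text{Min}(v+2\mathcal{L})$. Writing $v'=v_{E}+v_{1}+v_{M}$ according to the splitting and using $v'-v\in 2\mathcal{L}$, we see that $v_E\in 2E_8$ has square $0$ or $\geqslant 8$, that $v_1\in 2\langle +1\rangle^{n}$ has square $0$ or $\geqslant 4$, and that $v_{M}=v+2u_{M}$ for some $u_M\in M$. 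Combined with $|v'|^{2}=k$, a Cauchy--Schwarz analysis (as in Proposition~\ref{prop:diag}, with additional norm bookkeeping when $k$ is large enough to accommodate nonzero $v_E$ or $v_1$) forces $v_{E}=v_{1}=0$ and $u_{M}=-v$, so $v'=\pm v$ and $\text{Min}(v+2\mathcal{L})=\{\pm v\}$. Hence $m(\mathcal{L})\geqslant k-1\geqslant 2$, contradicting the hypothesis, so $M=0$ and $\mathcal{L}=E_8\oplus\langle +1\rangle^{n}$.

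The main obstacle is the case analysis in the first stage. The most natural candidate $w$---a sum of two orthogonal roots in $R$, as in the proof of Lemma~\ref{lemma:indecomp}---only produces the even half-count $\tfrac{1}{2}\#\text{Min}=2$, and for $R$ of type $\D_n$ the additional square-$4$ vectors beyond those coming from an $\A_3$ subsystem tend to keep naive counts even. The key idea is to look outside $R$ using the glue structure of $\mathcal{L}/R$, which furnishes short vectors whose cosets contain only $\pm w$ and whose half-counts are odd.
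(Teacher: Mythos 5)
Your second stage is essentially sound: the norm bookkeeping across the splitting $E_8\oplus\langle +1\rangle^n\oplus M$ does force $\text{Min}(v+2\mathcal{L})=\{\pm v\}$ for $v$ of minimal nonzero norm in $M$, giving $m(\mathcal{L})\geqslant 2$. The gap is in your first stage, in two places. First, the reduction to a checkable list of pairs $(R,L)$ is never made: for each indecomposable type you must handle \emph{every} reduced unimodular lattice $L$ whose root system is exactly that type, in every rank, and you give no mechanism for doing so (and you cannot invoke Elkies' classification here without importing the $d$-invariant input this lemma is meant to avoid). Second, your workhorse claim --- that a vector $v$ of square $\geqslant 3$ in $R^\perp\cap L$ automatically has $\text{Min}(v+2\mathcal{L})=\{\pm v\}$ --- is false as stated. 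The Cauchy--Schwarz argument of Proposition~\ref{prop:diag} requires $v$ to have minimal nonzero norm in the reduced part, which fails as soon as $L$ contains roots; already for $v^2=4$ nothing rules out $u\in L$ with $u^2=3$ and $v\cdot u=-3$, in which case $(v+2u)^2=4$ and the coset acquires extra minimal vectors. The claim does hold for $v^2=3$, but a square-$3$ vector need not exist: $\Gamma_{16}$ is even with indecomposable root system $\D_{16}$, and its shortest glue vector has square $4$. So your sketch does not cover, for instance, even lattices with root system $\D_n$, nor lattices in which $R$ has non-full rank and $R^\perp\cap L$ has minimum $\geqslant 4$.

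The missing idea is the paper's mod-$2$ argument. Consider $\pi: R\otimes\Z/2\to L\otimes\Z/2$ induced by inclusion. If $\pi$ is not onto, choose $w\in L$ of minimal norm with $[w]\notin\text{im}(\pi)$; a short two-case argument (according to whether $[u]$ lies in $\text{im}(\pi)$, for $v=w+2u$ extremal) shows $\text{Min}(w+2\mathcal{L})=\{\pm w\}$, and $w^2\geqslant 3$ since $w$ is not a root, whence $m(\mathcal{L})\geqslant 2$. Therefore $\pi$ is an isomorphism, which forces $\text{rank}(R)=\text{rank}(L)$ and $\det(R)$ odd. For indecomposable $R$ this leaves only $0$, $\E_6$, $\E_8$ and $\A_n$ with $n$ even --- in particular every $\D_n$ and $\E_7$ is eliminated at a stroke, with no case analysis over ambient lattices --- and the survivors are dispatched by the fact that there are no non-diagonal unimodular definite lattices of rank $<8$ together with one explicit square-$4$ vector in $\A_n$. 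It is exactly this device of taking $w$ of minimal norm \emph{outside a proper $\F_2$-subspace of $L\otimes\Z/2$}, rather than minimal in an orthogonal complement, that makes $\text{Min}(w+2\mathcal{L})=\{\pm w\}$ provable.
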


\vspace{0.15cm}

\begin{proof} 
We claim the map $\pi:R\otimes \Z/2\to L\otimes \Z/2$ induced by inclusion is an isomorphism. (This is essentially the proof of \cite[Lemma 4.3]{froyshov-thesis}.) Suppose it is not. Choose $w$ of minimal norm such that $[w]$ is not in the image of $\pi$. In particular, $w$ is extremal. Now suppose $v=w+2u$ is extremal with $v\cdot w \geqslant 0$ and $v\neq \pm w$. If $[u]\not\in \text{im}(\pi)$ then $2w^2\geqslant w^2-2w\cdot v + v^2 =(w-v)^2=4u^2 \geqslant 4w^2$, the last inequality by minimality of $w$. This is a contradiction, and so $[u]\in\text{im}(\pi)$. In particular, $[w\pm u]\not\in\text{im}(\pi)$. Then $(w\pm u)^2\geqslant w^2$ implies $2|w\cdot u|\leqslant u^2$. But $w^2=v^2=w^2+4w\cdot u+4u^2$ implies $|w\cdot u|=u^2$, whence $u=0$. It follows that $\text{Min}(w+2\mathcal{L})=\{w,-w\}$. Then $m(\mathcal{L})\geqslant w^2-1\geqslant 2$, since $w$ is not a root, contradicting our hypothesis on $m(\mathcal{L})$.\\

Thus $\pi$ is an isomorphism. In particular, $\text{rank}(R)=\text{rank}(L)$ and $\det(R)$ is odd. If $R$ is indecomposable, the latter condition implies that $R$ is either zero, $\mathsf{E}_6$, $\mathsf{E}_8$ or $\mathsf{A}_n$ for $n\geqslant 2$ even. That $m(\mathcal{L})=1$ when $L=R=E_8$ follows from direct computation, or by applying Corollary \ref{cor:link} to $+1$ surgery on the $(2,3)$ torus knot, which bounds $E_8$.\\

If $R$ is zero, so is $L$, since the ranks are equal. But then $\mathcal{L}$ is diagonal, contradicting $m(\mathcal{L})=1$. Next, suppose $R=\mathsf{A}_n$. A standard model of $\mathsf{A}_n$ is the sublattice of $\Z^{n+1}$ spanned by vectors whose coordinates add up to zero. Suppose $n\geqslant 3$, and let $w=(1,1,-1,-1,0,\ldots,0)\in\mathsf{A}_n$. Then $w$ is extremal in $\mathsf{A}_n$ with square 4, and $\text{Min}(w+2\mathcal{L})=\text{Min}(w+2\mathsf{A}_{n})$ consists of the $6$ vectors obtained from $w$ by permuting the two signs. Thus $\frac{1}{2}\#\text{Min}(w+2\mathcal{L})=3$, implying $m(\mathcal{L})\geqslant w^2-1= 3$. Finally, the cases $\mathsf{E}_6$ and $\mathsf{A}_2$ are ruled out by $\text{rank}(R)=\text{rank}(L)$; it is well-known that there are no unimodular, non-diagonal definite lattices of rank $<8$. 
\end{proof}

\vspace{0.25cm}

We next recall the fundamental inequality for the Heegaard Floer $d$-invariant of Oszv\'{a}th and Szab\'{o} \cite[Thm. 1.11]{os}. This states that if $Y$ is an integer homology 3-sphere, and $X$ is a smooth, negative definite 4-manifold bounded by $Y$, then for any characteristic vector $\xi\in H^2(X;\Z)/\text{Tor}$,
\begin{equation}
    d(Y) \; \geqslant \; \frac{1}{4}\left(b_2(X)-|\xi^2|\right).\label{eq:dinv}
\end{equation}
Recall that a characteristic vector $\xi$ is an element that satisfies $\xi \cdot x \equiv x^2$ (mod 2) for every $x$ in the lattice. It is classically known that the square of any characteristic vector is modulo 8 the rank of the lattice. Elkies showed in \cite{elkies-2} that, up to adding diagonal summands $\langle +1\rangle^n$, there are a finite number of positive definite unimodular lattices with no characteristic vectors of squared norm less than $n-8$, where $n$ is the rank of the lattice. There are in fact 14:
\begin{table}[h!]
  \centering
  \caption{Elkies' list}
\vspace{.1cm}
  \label{tab:table1}
\renewcommand{\arraystretch}{1.2}
  \begin{tabular}{ccccccccccccc}
    \hline
    $n$ & 8 & 12 & 14 & 15 & 16 & 17 & 18  & 19 & 20 & 21 & 22 & 23\\
    \hline
    & $E_8$ & $D_{12}$ & $E_7^2$ & $A_{15}$ & $D_8^2$ & $A_{11}E_6$ & $D_6^3$, $A_9^2$ & $A_7^2 D_5$ & $D_4^5$, $A_5^4$ & $A^7_3$ & $A_1^{22}$ & $O_{23}$  \\ 
    \hline
  \end{tabular}
\end{table}
Thus by (\ref{eq:dinv}), if a non-diagonal definite lattice is bounded by $Y$ with $d(Y)=-2$, as is the case for the orientation-reversal of the Poincar\'{e} homology 3-sphere, it must be one of these 14 lattices, possibly upon adding $\langle + 1 \rangle^n$. We remark that Seiberg-Witten theory can also be used make this reduction, as Fr\o yshov's monopole invariant (rescaled) also satisfies (\ref{eq:dinv}), see \cite[Thm. 4]{froyshov-monopole}. It is known that if $Y$ is $+1$-surgery on a knot of slice genus 1 we have $d(Y)\in\{0,-2\}$, see (\ref{eq:dgenus}). According to Elkies \cite{elkies-1}, if $d(Y)=0$, the only possible definite lattices that $Y$ can bound are diagonal.\\

We obtain the following, which, along with the observation that the statement is $\Z/2$-homology cobordism invariant (see Section \ref{sec:more}), implies Theorem \ref{thm:genus1}.

\vspace{0.25cm}

\begin{corollary}\label{cor:genus1}
Let $Y$ be an integer homology 3-sphere resulting from $(+1)$ surgery on a knot $K$ in an integer homology 3-sphere with $g_{4,2}(K) = 1$. If $X$ is a smooth, compact, oriented and definite 4-manifold bounded by $Y$ with non-diagonal lattice $\mathcal{L}$ and no 2-torsion in its homology, then $\mathcal{L}=\langle +1 \rangle^n\oplus E_8$ for some $n\geqslant 0$.
\end{corollary}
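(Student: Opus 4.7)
The plan is to handle the two signs of $\mathcal{L}$ separately: for positive definite $X$ I will combine Corollary \ref{cor:link} with Lemmas \ref{lemma:indecomp} and \ref{lemma:e8} to identify $\mathcal{L}$; for negative definite $X$ I will rule out the non-diagonal possibility using Donaldson's theorem applied to an auxiliary positive definite filling of $Y$.

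In the positive definite case, since Corollary \ref{cor:link} is stated for $(-1)$-surgery and a negative definite filling, I first pass to orientation reversal. Then $-Y$ is $(-1)$-surgery on the mirror link $\bar L \subset -Y_0$, and $g_{4,2}(\bar L) = g_{4,2}(L) = 1$ by mirroring the $\Z/2$-homology ball together with its genus-$1$ surface. Since $-X$ is negative definite with $\partial(-X) = -Y$, Corollary \ref{cor:link} yields $f_2(-\mathcal{L}), f_4(-\mathcal{L}) \leqslant 1$, and since $f_2$ and $f_4$ depend only on $|w^2|$, the same bounds hold for $\mathcal{L}$. Non-diagonality of $\mathcal{L}$ combined with Proposition \ref{prop:diag} gives $m(\mathcal{L}) \geqslant 1$, and then $f_2(\mathcal{L}) \geqslant m(\mathcal{L})$ together with $f_4(\mathcal{L}) \geqslant \lceil m(\mathcal{L})/2 \rceil$ forces $m(\mathcal{L}) = f_2(\mathcal{L}) = f_4(\mathcal{L}) = 1$. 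Lemma \ref{lemma:indecomp} then makes the root sublattice $R \subset \mathcal{L}$ indecomposable, and Lemma \ref{lemma:e8} identifies $\mathcal{L} \cong E_8 \oplus \langle +1 \rangle^n$.

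For the remaining case where $X$ is negative definite, I would use the smooth positive definite filling $\tilde M = W \cup_{Y_0} C$ of $Y$ produced by the genus-$1$ data, where $C$ is the $(+1)$-surgery cobordism and $W$ is the $\Z/2$-homology ball. The closed 4-manifold $(-X) \cup_Y \tilde M$ is smooth and positive definite with intersection form $-\mathcal{L} \oplus \mathcal{L}_C$, orthogonal because $Y$ is a rational homology sphere and $W$ contributes no rational $H^2$. Donaldson's Theorem A then forces this form to be $\Z^N$, and a short induction peeling off standard basis vectors shows that any unimodular positive definite summand of $\Z^N$ is itself diagonal; hence $-\mathcal{L}$ is diagonal, contradicting non-diagonality. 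The main obstacle I anticipate is bookkeeping: verifying that the 2-torsion hypothesis propagates through the orientation reversal as needed for Corollary \ref{cor:link}, and that the intersection form of the closed manifold in the last step splits cleanly as $-\mathcal{L} \oplus \mathcal{L}_C$ despite possible $\Z$-torsion contributed by $W$.
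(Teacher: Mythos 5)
Your argument is correct and is essentially the paper's own proof: apply Corollary \ref{cor:link} (after the orientation reversal turning $(+1)$-surgery on $L$ into $(-1)$-surgery on the mirror) to get $f_2(\mathcal{L})=f_4(\mathcal{L})=m(\mathcal{L})=1$, then invoke Lemmas \ref{lemma:indecomp} and \ref{lemma:e8}. Your explicit treatment of the negative definite case via Donaldson's Theorem A is the same device the paper uses (in the remark following Theorem \ref{thm:genus2} and in the proof of Corollary \ref{cor:2,3}) but leaves implicit here, so it is a welcome clarification rather than a different route.
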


\vspace{0.15cm}

\begin{proof}
	From the above remarks regarding $d$-invariants and Elkies' result, the reduced part $L$ of $\mathcal{L}$ is among the 14 lattices in Table \ref{tab:table1}. By Corollary \ref{cor:link} we have $f_4(\mathcal{L})\leqslant 1$. We may assume $f_4(\mathcal{L})=1$, for otherwise $\mathcal{L}$ is diagonal. By Lemma \ref{lemma:indecomp}, $L$ must be one of $E_8$, $D_{12}=\Gamma_{12}$, $A_{15}$, or $O_{23}$. We must rule out the last 3 possibilities.\\

Suppse $L=A_{15}$. As in Lemma \ref{lemma:e8}, we take $w=(1,1,-1,-1,0,\ldots,0)\in \mathsf{A}_{15}$, which is extremal and has $w^2=4$, with $\eta(\mathcal{L},w)=3\not\equiv 0 \pmod 4$. Thus $f_4(\mathcal{L})\geqslant w^2/2=2$, ruling this possibility out.\\

Suppose $L=O_{23}$. Minimal vectors in $O_{23}$ have square $3$. Take any $w\in  O_{23}$ with $w^2=4$. Such vectors exist by inspecting the theta series of $O_{23}$, given in (7) of \cite[p.443]{conwaysloane}. Then $w$ is extremal and $\text{Min}(w+2\mathcal{L})=\{w,-w\}$, so again we are led to $f_4(\mathcal{L})\geqslant w^2/2 = 2$, eliminating $O_{23}$.\\

Finally, consider the case $L=D_{12}$. Here $f_4(\mathcal{L})=1$ (see Proposition \ref{prop:gamma}), and we use instead the constraint from Corollary \ref{cor:link} that $f_8(\mathcal{L})=1$. To this end we take $w=(1,1,1,1,0,\ldots,0)\in \mathsf{D}_{12}$ as our extremal vector. Then $\text{Min}(w+\mathcal{L})=\text{Min}(w+\mathsf{D}_{12})$ consists of the vectors of the form $(\pm 1, \pm 1, \pm 1, \pm 1, 0,\ldots, 0)$ where the number of signs is even. Thus $\eta(\mathcal{L},w)=\frac{1}{2}\# \text{Min}(w+\mathcal{L}) = \frac{1}{2}8=4\not\equiv 0 \pmod 8$. Thus $f_8(\mathcal{L})\geqslant w^2/2=2$, which rules out $D_{12}$ and completes the proof.
\end{proof}

\vspace{0.25cm}

\begin{proof}[Proof of Corollary \ref{cor:2,3}] The manifold $-\Sigma(2,3,5)$ is $+1$ surgery on the $(2,3)$ torus knot of genus 1. By Theorem \ref{thm:genus1} it remains to realize the listed lattices. The corresponding surgery cobordism provides the form $\langle +1 \rangle$, and $-\Sigma(2,3,5)$ bounds a plumbed manifold with lattice $E_8$. After connect summing with copies of $\cp$ we obtain from these $\langle +1 \rangle^{n+1}$ and $\langle +1 \rangle^n \oplus E_8$ for $n\geqslant 0$. Finally, $\langle -1 \rangle^n$ cannot occur; for if it did, gluing the orientation reversed 4-manifold to the $E_8$ plumbing would yield a non-diagonal definite lattice $E_8\oplus \langle + 1\rangle^n$, contradicting Donaldson's theorem.
\end{proof}

\vspace{0.25cm}

As mentioned in the introduction, Corollary \ref{cor:2,3} is a slight improvement on the main result of Fr\o yshov's PhD thesis \cite{froyshov-thesis}. Although the proof above used some Heegaard Floer theory, we will remove this dependency in the next section. In Section \ref{sec:alt} we provide another proof of Corollary \ref{cor:2,3} which is closer to Fr\o yshov's proof.

\section{Genus 2 applications}\label{sec:genus2}

In this section we prove Theorem \ref{thm:genus2}. We continue our notation of lattices from Section \ref{sec:genus1}. We begin with a family of examples for later reference. Using notation of \cite{froyshov-inequality} set
\begin{equation}
    \Gamma_{4k} \; = \; \left\{ (x_1,\ldots,x_{4k}) \in \Z^{4k}\cup \left(v+\Z^{4k}\right) : \; \sum x_i \equiv 0 \text{ (mod 2)}\right\}\label{eq:gamma}
\end{equation}
where $v=(\tfrac{1}{2},\ldots,\tfrac{1}{2})\in \R^{4k}$. We remark that $\Gamma_{4}$ is diagonalizable, and $\Gamma_{8}=E_8$. The lattice $\Gamma_{4k}$ is even precisely when $k$ is even.  We note that $\Gamma_{12}$ is the same as $D_{12}$ from Table \ref{tab:table1}, the latter notation indicating that the root lattice of $\Gamma_{12}$ is $\mathsf{D}_{12}$. The lattice $\Gamma_{4k}$ is isomorphic to the intersection form of the positive definite plumbing with boundary the orientation-reversed Brieskorn sphere $-\Sigma(2,2k-1,4k-3)$:
\begin{figure}[h!]
\begin{center}
\begin{tikzpicture}
	\draw (0,0) -- (5,0);
	\draw (6,0) -- (7,0);
	\draw (2,0) -- (2,1);
	\draw[fill=black] (0,0) circle(.1);
	\draw[fill=black] (1,0) circle(.1);
	\draw[fill=black] (2,0) circle(.1);
	\draw[fill=black] (2,1) circle(.1);
	\draw[fill=black] (3,0) circle(.1);
	\draw[fill=black] (4,0) circle(.1);
	\draw[fill=black] (5,0) circle(.1);
	\draw[fill=black] (6,0) circle(.1);
	\draw[fill=black] (7,0) circle(.1);
	\node at (0,-.5) {$k$};
	\node at (1,-.5) {$2$};
	\node at (2,-.5) {$2$};
	\node at (3,-.5) {$2$};
	\node at (2.5,1) {$2$};
	\node at (4,-.5) {$2$};
	\node at (5.5,0) {$\cdots$};
	\node at (7,-.5) {$2$};
\end{tikzpicture}
\end{center}
\caption{}\label{fig:gamma}
\end{figure}
Via (\ref{eq:gamma}), the node $k$ corresponds to the vector $( \frac{1}{2},\ldots,\frac{1}{2})$, while the other nodes correspond to $(1,1,0\ldots,0)$ and $(1,-1,0,\ldots,0), \ldots,(0,\ldots,1,-1,0)$. Replacing $(\frac{1}{2},\ldots,\frac{1}{2})$ by $(0,\ldots,0,1,-1)$ in this collection yields the root lattice $\mathsf{D}_{4k}\subset \Gamma_{4k}$.

\vspace{0.25cm}

\begin{prop}\label{prop:gamma}
$   m(\Gamma_{4g+4}) \geqslant g$ and $ N_\beta^4(g)\geqslant f_4(\Gamma_{4g+4})\geqslant \lceil g/2 \rceil$.
\end{prop}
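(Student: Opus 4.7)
The plan is to combine a topological construction with a lattice calculation using the half-sum vector $v=(\tfrac{1}{2},\ldots,\tfrac{1}{2})\in\Gamma_{4g+4}$. Recall that the Brieskorn sphere $-\Sigma(2,2g+1,4g+1)$ is the boundary both of the positive definite $\Gamma_{4g+4}$-plumbing $P$ of Figure \ref{fig:gamma} and of the $+1$-surgery trace $Z$ on the $(2,2g+1)$ torus knot $K$, whose smooth 4-ball genus is $g$. I would choose a smoothly embedded, oriented, genus-$g$ surface $\Sigma\subset B^4$ with $\partial\Sigma=K$, and cap it off with the co-core of the $2$-handle of $Z$ to produce a closed genus-$g$ surface $\widehat{\Sigma}\subset N:=B^4\cup_{S^3}Z$ of self-intersection $+1$. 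Gluing $-P$ to $N$ along the common boundary yields a closed, simply connected $4$-manifold $X$ with torsion-free $H^2(X;\Z)$ and intersection form $-\Gamma_{4g+4}\oplus\langle +1\rangle$, the $\langle +1\rangle$ summand being generated by $[\widehat{\Sigma}]$.

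Applying Theorem \ref{thm:char2} to $X$ with the single surface $\widehat{\Sigma}$ gives $N_\alpha^2(g)\geqslant f_2(-\Gamma_{4g+4})\geqslant m(-\Gamma_{4g+4})$, and Theorem \ref{thm:char4} gives $N_\beta^4(g)\geqslant f_4(-\Gamma_{4g+4})$. Since $m$, $f_2$ and $f_4$ are manifestly invariant under sign reversal of the lattice, these become the first inequalities of the proposition. It therefore suffices to establish the purely lattice-theoretic bounds $m(\Gamma_{4g+4})\geqslant g$ and $f_4(\Gamma_{4g+4})\geqslant\lceil g/2\rceil$, and in both cases I would use $w=v$, which has $w^2=g+1$.

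Writing out $2\Gamma_{4g+4}$ as the set of vectors in $\Z^{4g+4}$ with either all even or all odd integer coordinates and total sum divisible by $4$, one sees that the coset $v+2\Gamma_{4g+4}$ splits into half-integer vectors whose coordinates lie either entirely in $\tfrac{1}{2}+2\Z$ or entirely in $-\tfrac{1}{2}+2\Z$, and in each piece a vector of norm $g+1$ must have every coordinate equal to $\pm\tfrac{1}{2}$, forcing it to equal $+v$ or $-v$ respectively. Hence $\mathrm{Min}(v+2\Gamma_{4g+4})=\{\pm v\}$, so $\tfrac{1}{2}\#\mathrm{Min}=1$ is odd and $m(\Gamma_{4g+4})\geqslant v^2-1=g$. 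For $f_4$, I split on the parity of $g$: if $g$ is odd, take $m=0$ and $a=1$, yielding $\eta=\tfrac{1}{2}((-1)^{g+1}+1)=1\not\equiv 0\pmod{4}$; if $g$ is even, take $m=m_1=1$ and $a=(1,1,0,\ldots,0)\in\Gamma_{4g+4}$, so that $v\cdot a=1$ and $\eta=-(v\cdot a)=-1\not\equiv 0\pmod{4}$. In either case $(w^2-m)/2=\lceil g/2\rceil$, completing the bound.

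There is no serious obstacle in this argument beyond identifying $v$ as the correct extremal test vector; natural alternatives such as roots of $\mathsf{D}_{4g+4}$ or short characteristic vectors give $\tfrac{1}{2}\#\mathrm{Min}$ even and therefore do not detect $m$ or $f_4$ at all.
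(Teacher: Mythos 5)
Your proposal is correct and is essentially the paper's proof: the same closed $4$-manifold (the paper obtains $W\cup N'$ from Fintushel--Stern's decomposition of $\cp\#(4g+5)\overline{\cp}$ rather than by gluing the reversed plumbing to the surgery trace directly), the same extremal vector $w=(\tfrac{1}{2},\ldots,\tfrac{1}{2})$ with $\mathrm{Min}(w+2\mathcal{L})=\{\pm w\}$, and the same application of Theorems \ref{thm:char2} and \ref{thm:char4}; your only deviations are supplying the $\eta$-computation for $f_4$ explicitly where the paper invokes $f_4\geqslant\lceil m/2\rceil$, which is fine. One trivial slip: the slice surface is capped off with the \emph{core}, not the co-core, of the $2$-handle.
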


\vspace{0.15cm}

\begin{proof} It is shown in \cite[\S 2]{fs-can} that $R(k)=\cp\# (4k+1)\overline{\cp}$ can be decomposed as $W\cup N$, where $W$ is the negative definite plumbing of $\Sigma(2,2k-1,4k-3)$ with intersection form $-\Gamma_{4k}$ and $N$ is obtained from attaching to the 4-ball a $0$-framed 2-handle along the $(2,2k-1)$ torus knot and a $(-1)$-framed 2-handle along a meridian of the torus knot. Blowing down the meridian 2-handle yields $X(k)$ such that $R(k)=X(k)\#\overline{\cp}$, with a decomposition $W\cup N'$ where $N'$ is obtained by attaching only a $(+1)$-framed 2-handle to the torus knot. Since the $(2,2k-1)$ torus knot has genus $k-1$, the 2-handle can be capped off to form a surface $\Sigma(k)\subset X(k)$ of genus $k-1$. The lattice of vectors vanishing on $[\Sigma(k)]$ is isomorphic to $-\Gamma_{4k}$.\\

The vector $w=(\frac{1}{2},\ldots,\frac{1}{2})\in \mathcal{L}:=\Gamma_{4g+4}$ is extremal with $w^2=g+1$ and $\text{Min}(w+2\mathcal{L})=\{w,-w\}$. Thus $ m(\mathcal{L}) \geqslant w^2-1 = g$. It also follows that $f_4(\mathcal{L})\geqslant \lceil m(\mathcal{L})/2 \rceil \geqslant \lceil g/2 \rceil$. Now given $g$ we take as our 4-manifold $X=X(g+1)$ with genus $g$ surface $\Sigma=\Sigma(g+1)$. Then the left side of (\ref{eq:ineqchar4}) is $N_\beta^4(g)$, and the result follows.
\end{proof}

\vspace{0.25cm}

This should be compared to \cite[Prop.1]{froyshov-inequality}. There it is shown that $e_0(\Gamma_{4g+4})= \lceil g/2 \rceil$. Thus the above family of 4-manifolds with surface achieve sharpness in Fr\o yshov's inequality of Theorem \ref{thm:char0}. Proposition \ref{prop:nilppartial} shows that the same family achieves sharpness in the inequality of Theorem \ref{thm:char4} for low $g$, and we expect this to be true for all $g$. If inequality \eqref{eq:mod2speculation} in the context of mod 2 coefficients were to hold in general, then this family would achieve sharpness there as well. We remark that the same 4-manifolds are used by Behrens and Golla \cite{bg} in the Heegaard Floer context.\\

We now move on to the main line of argument for Theorem \ref{thm:genus2}. Recall that for the proof of Theorem \ref{thm:genus1}, we used Lemma \ref{lemma:indecomp}, which says $f_4(\mathcal{L})=1$ implies the root lattice of $\mathcal{L}$ is indecomposable. The key algebraic input towards the proof of Theorem \ref{thm:genus2} is the following upgrade.

\vspace{0.25cm}

\begin{lemma}\label{lemma:f4}
    If $f_4(\mathcal{L})=1$ then $L$ is one of $E_8$ or $\Gamma_{12}$.
\end{lemma}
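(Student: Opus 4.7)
The plan is to combine the inequality $f_4(\mathcal{L})\geqslant\lceil m(\mathcal{L})/2\rceil$ with the structural results of Lemmas \ref{lemma:indecomp} and \ref{lemma:e8}. The inequality forces $m(\mathcal{L})\in\{1,2\}$. If $m(\mathcal{L})=1$ then $L=E_8$ by Lemma \ref{lemma:e8}, so the essential case is $m(\mathcal{L})=2$. Under this assumption, Lemma \ref{lemma:indecomp} says the root lattice $R\subset L$ is indecomposable, hence one of $\mathsf{A}_n$, $\mathsf{D}_n$, $\mathsf{E}_6$, $\mathsf{E}_7$, $\mathsf{E}_8$.

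First I would adapt the minimality argument of Lemma \ref{lemma:e8} to analyze the map $\pi\colon R\otimes\Z/2\to L\otimes\Z/2$. For any vector $w$ of minimal norm with $[w]\notin\text{im}(\pi)$, the same line of reasoning shows $w$ is extremal with $\text{Min}(w+2\mathcal{L})=\{\pm w\}$, so $m(\mathcal{L})\geqslant w^2-1$. Since $L$ is reduced and all roots lie in $R$, this pins $w^2=3$. Thus every class of $L$ outside $\text{im}(\pi)$ admits a norm-$3$ representative modulo $2\mathcal{L}$.

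Next I would run through the indecomposable root lattices. $\mathsf{A}_n$ with $n\geqslant 3$ is killed by the vector $(1,1,-1,-1,0,\ldots,0)$, which forces $m(\mathcal{L})\geqslant 3$. The cases $\mathsf{A}_{\leqslant 2}$, $\mathsf{E}_6$, $\mathsf{E}_7$ are dispatched by the absence of non-diagonal positive definite unimodular lattices of rank $<8$ combined with the observation that the only non-diagonal rank-$8$ candidate, $E_8$, has root lattice $\mathsf{E}_8$; higher-rank candidates for these $R$ are excluded by combining the norm-$3$ constraint on cokernel representatives with the classification of small unimodular lattices. For $R=\mathsf{E}_8$, indecomposability of $R$ together with the absence of norm-$1$ vectors forces $L=E_8$ itself, giving $m(\mathcal{L})=1$, contradicting $m=2$.

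This leaves $R=\mathsf{D}_n$. Among the reduced unimodular overlattices of $\mathsf{D}_n$, those whose root lattice is still $\mathsf{D}_n$ are precisely the lattices $\Gamma_n$ with $n\equiv 0\pmod 4$ and $n\geqslant 12$: the only other index-$2$ overlattice is the diagonal $\Z^n$, and for $n=8$ one gets $\Gamma_8=E_8$, whose root lattice is $\mathsf{E}_8$, not $\mathsf{D}_8$. Proposition \ref{prop:gamma} then yields $m(\Gamma_n)\geqslant n/4-1$, so the constraint $m(\mathcal{L})=2$ forces $n=12$ and $L=\Gamma_{12}$. The main obstacle I anticipate is cleanly ruling out supplementary rank in $L$ beyond $R=\mathsf{D}_n$: for a vector $u$ in $\mathsf{D}_n^\perp\cap L$, the norm-$3$ constraint on cokernel representatives combined with the unimodularity of $L$ (which forces $u^2$ to be a perfect square when $\mathsf{D}_n\oplus\Z u$ has finite index in $L$) produces arithmetic incompatibilities that dispatch each possibility, but this requires careful bookkeeping with the discriminant form of $\mathsf{D}_n$.
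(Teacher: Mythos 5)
Your opening reduction is where the argument goes wrong: from $f_4(\mathcal{L})=1$ you retain only $m(\mathcal{L})\leqslant 2$ plus indecomposability of $R$, but that discards exactly the mod~4 information the proof needs. The quantity $m(\mathcal{L})$ only sees extremal vectors with $\tfrac{1}{2}\#\text{Min}(w+2\mathcal{L})$ \emph{odd}, whereas the paper's case analysis repeatedly rules out lattices using vectors for which this count is even but $\eta\equiv 2\pmod 4$, or using $\eta(\mathcal{L},w,a,m)$ with a nontrivial argument $a\in\text{Sym}^m$ --- data that $m(\mathcal{L})$ by definition never uses. Concretely: for $R=\mathsf{E}_7$ the key vector has $\tfrac{1}{2}\#\text{Min}=6$, so it contributes nothing to $m(\mathcal{L})$, yet $\eta=6\not\equiv 0\pmod 4$ forces $f_4\geqslant 2$; the same phenomenon occurs for $\mathsf{E}_6$, for the subcase of $\mathsf{A}_1,\mathsf{A}_2$ where the norm-3 vector is orthogonal to $R$ (there one needs $\eta(\mathcal{L},w+r,w,1)=-6$), for most subcases of $\mathsf{D}_n$ when $R$ does not have full rank in $L$, and for the rootless case (where the paper computes $\eta(\mathcal{L},x,xx,2)=6$). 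So even if the statement ``$m(\mathcal{L})=2$ and $R$ indecomposable imply $L=\Gamma_{12}$'' were true, it cannot be proved by the $m$-based bookkeeping you propose, and you give no argument that it is true.

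Three further concrete gaps. First, you omit the case $R=0$ entirely; a rootless reduced $L$ is consistent with $R$ ``indecomposable'' and is handled as a separate (genuinely mod 4) case in the paper. Second, your dismissal of $\mathsf{A}_{\leqslant 2},\mathsf{E}_6,\mathsf{E}_7$ via ``the classification of small unimodular lattices'' is unjustified: nothing in your argument bounds $\text{rank}(L)$, and the classification is only complete in low rank, so it cannot be invoked. (The correct substitute is an explicit $\eta$ computation for each root system, as in the paper, valid in arbitrary rank.) Third, the step you flag as ``the main obstacle'' --- the case $R=\mathsf{D}_n$ with $\text{rank}(L)>n$ --- is in fact the bulk of the proof: one must analyze how minimal-norm coset representatives outside $\text{im}(\pi)$ pair with the roots, produce square-4 extremal vectors from pairs of such norm-3 vectors, and compute $\#\text{Min}/\pm$ modulo 4 in each configuration. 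The ``discriminant form bookkeeping'' you sketch does not engage with this, since $L$ need not contain $\mathsf{D}_n\oplus(\mathsf{D}_n^\perp\cap L)$ with small corank and the complement can itself be a complicated rootless lattice. The parts of your plan that do work --- $m=1$ giving $E_8$ via Lemma \ref{lemma:e8}, the exclusion of $\mathsf{A}_{n\geqslant 3}$ by the vector $(1,1,-1,-1,0,\ldots,0)$, and the full-rank $\mathsf{D}_n$ case giving $\Gamma_{12}$ --- coincide with the easy portions of the paper's proof.
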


\vspace{0.15cm}

\begin{proof}
    From Lemma \ref{lemma:indecomp} we know $R$ is indecomposable, and hence one of $\A_n$, $\Ds_n$, $\E_6$, $\E_7$, $\E_8$ or zero. We will again use that $w\in R$ with $w^2=4$ has $\text{Min}(w+2\mathcal{L})=\text{Min}(w+2R)$, as shown in the proof of Lemma \ref{lemma:indecomp}. All extremal vectors $w$ chosen below have the property that the elements in $\text{Min}(w+2\mathcal{L})$ have the same signs in the expression for $\eta$ when $m=0$.\\
    
    Suppose $R=\mathsf{E}_7$. A standard model for $\mathsf{E}_7$ is the sublattice of $E_8=\Gamma_8$ consisting of vectors whose coordinates add to zero. Let $w=(1,1,-1,-1,0,0,0,0)$. Then $w$ is extremal in $\mathsf{E}_7$ of square 4, and $\text{Min}(w+2\mathcal{L})=\text{Min}(w+2\mathsf{E}_{7})$ consists of the 12 vectors obtained by permuting the signs of $w$ and those of $(0,0,0,0,1,1,-1,-1)$. Thus $\eta(\mathcal{L},w)=6 \not\equiv 0$ (mod 4), and $f_4(\mathcal{L})\geqslant w^2/2= 2$.\\
    
    Suppose $R=\mathsf{E}_6$. A standard model for $\mathsf{E}_6$ is the sublattice of $E_8=\Gamma_8$ consisting of vectors whose last three coordinates are equal. Consider $w=(1,1,1,1,0,0,0,0)\in\mathsf{E}_6$, extremal and of square 4. Then $\text{Min}(w+2\mathcal{L})=\text{Min}(w+2\mathsf{E}_{6})$ consists of the 8 vectors $(\pm 1,\pm 1,\pm 1,\pm 1,0,0,0,0)$ with an even number of signs, as well as the 2 vectors $\pm(0,0,0,0,1,1,1,1)$. Thus $\eta(\mathcal{L},w)=(8+2)/2=5 \not\equiv 0$ (mod 4), and $f_4(\mathcal{L})\geqslant w^2/2= 2$.\\
    
    Suppose $R=\mathsf{A}_n$, $n\geqslant 3$. As in the proof of Lemma \ref{lemma:e8}, take $w$ to be the vector given by $(1,1,-1,-1,0,\ldots,0)\in\mathsf{A}_n$, for which $w+2\mathsf{A}_n$ has 6 extremal vectors. Then $\eta(\mathcal{L},w)=3 \not\equiv 0 $ (mod 4), and $f_4(\mathcal{L})\geqslant w^2/2=2$.\\
    
    Suppose $R=\mathsf{A}_2$. Let $\pi:R\otimes \Z/2\to L\otimes \Z/2$ be the map induced by inclusion. This map cannot be onto, since any unimodular lattice of rank $2$ is diagonal. Choose $w\in L$ of minimal norm such that $[w]\not\in\text{im}(\pi)$. We showed in the proof of Lemma \ref{lemma:e8} that $\text{Min}(w+2\mathcal{L})=\{w,-w\}$. Since $w\not \in R$, $w^2\geqslant 3$. If $w^2\geqslant 4$ then $f_4(\mathcal{L})\geqslant \lfloor w^2/2 \rfloor \geqslant 2$. So suppose $w^2=3$. Further suppose $w\perp R$. Then $w + r$ is extremal of square 5 and $\text{Min}(w+r+2\mathcal{L})=\{\pm w \pm r\}$. We compute $\eta(\mathcal{L},w+r,w,1) =-2w^2=-6 \not \equiv 0 \text{ (mod 4)}$. It follows that $f_4(\mathcal{L})\geqslant((w+r)^2-1)/2 =2$. Now instead suppose $w$ is not orthogonal to $R$. From $5\pm 2w\cdot r = (w\pm r)^2 \geqslant 0$ and the assumption that $L$ has no vectors of square $1$ we obtain $|w\cdot r |\leqslant 1$ for each root $r$. Let $r_1,r_2,r_3$ be roots satisfying $r_1+r_2+r_3=0$, so that $\{\pm r_1, \pm r_2, \pm r_3\}$ is the set of all roots. The condition $|w\cdot r|\leqslant 1$ implies, after possibly relabeling, that $w\cdot r_1 = 0$, $w\cdot r_2 = 1$ and $w\cdot r_3 = -1$. Then $w+r_1$ is an extremal vector of square 5, $\text{Min}(w+r_1+2\mathcal{L})=\{\pm w \pm r_1, \pm(w+r_1+2r_3)\}$, and $\eta(\mathcal{L},w+r_1,w,1) =-7 \not \equiv 0 \text{ (mod 4)}$, again implying $f_4(\mathcal{L})\geqslant 2$.\\
    
    Suppose $R=\mathsf{A}_1$. Again, $\pi$ is not onto, and its cokernel has rank at least 2, since no unimodular lattice of rank $\leqslant 3$ has root lattice $\mathsf{A}_1$. Again choose $w$ of minimal norm such that $[w]\not\in\text{im}(\pi)$. If $w^2\geqslant 4$, we are done; so suppose $w^2=3$. Let $r$ be the unique root in $\mathsf{A}_1$ up to sign. If $w\cdot r=0$, then as in the case for $\mathsf{A}_2$ we can use $w+r$ to conclude $f_4(\mathcal{L})\geqslant 2$. So assume $w\cdot r \neq 0$. As before, $|w\cdot r|\leqslant 1$, so in fact $|w\cdot r|=  1$. Let $v$ be of minimal norm such that $[v]\not\in \text{im}(\pi)+[w]$. Then the same argument as in the proof of Lemma \ref{lemma:e8} shows $\text{Min}(v+2\mathcal{L})=\{v,-v\}$. If $v^2\geqslant 4$, we are done; so suppose $v^2=3$. If $v\cdot r=0$, then take $v+r$ as in the case of $\mathsf{A}_2$. Suppose instead $v\cdot r\neq 0$. As with $w$, we have $|v\cdot r|\leqslant 1$, so $|v\cdot r|=1$. Since $[v\pm w]\not\in \text{im}(\pi)+[w]$, by minimality of $v$ we have $(w\pm v)^2 \geqslant v^2$, from which it follows that $|w\cdot v|\leqslant 1$. If $w\cdot v=0$, then for some choice of signs, $w\pm r\pm v$ has square 4; if $w\cdot v = \pm 1$, then one of $v \pm w$ has square 4. In either case we obtain a vector of square 4, and take this as our extremal vector to obtain $f_4(\mathcal{L})\geqslant 2$.\\
    
    Next, suppose $R=\mathsf{D}_n$ for some $n\geqslant 4$. Suppose $\mathsf{D}_n$ has full rank within $L$, i.e. the map $\iota:\mathsf{D}_n\otimes \R \to L\otimes \R$ induced by inclusion is an isomorphism. The only full rank embeddings of $\mathsf{D}_{n}$ into a non-diagonal unimodular lattice $L$ are those inside $\Gamma_{4n}$ with $n\geqslant 2$ (see e.g. \cite[Sec.1.4]{ebeling}), and we have computed $f_4(\Gamma_{4n})\geqslant\lfloor n/2\rfloor$. If $f_4(\mathcal{L})=1$ then either $n=2$, in which case $L=E_8$, contradicting the assumption that $R=\mathsf{D}_8$, or $n=3$, in which case $L=\Gamma_{12}$. Thus we may assume that $\iota$ is not onto. It follows also that $\pi$ is not onto, since $n=\text{rank} (R) < \text{rank} (L)$. We will see that the arguments below generalize those for the cases of $\mathsf{A}_1$ and $\mathsf{A}_2$ given above.\\
    
    We begin as in the case for $\mathsf{A}_2$. Let $w\in L$ be of minimal norm such that $[w]\not\in \text{im}(\pi)$. If $w^2\geqslant 4$ we are done, as argued in the above cases, and so we may assume $w^2=3$. We may also assume $w\not\in\text{im}(\iota)$. Indeed, consider the map $L\to (L/\mathsf{D}_n)/\text{Tor}$. The codomain here is a free abelian group of rank equal to $\text{rank}(L)-n>0$. The argument in Lemma \ref{lemma:e8} shows that for a given proper subspace $S\subset L\otimes\Z/2$, any $w\in L$ of minimal norm among vectors such that $[w]\not\in S$ has $\text{Min}(w+2\mathcal{L})= \{w,-w\}$; in Lemma \ref{lemma:e8}, $S=\text{im}(\pi)$. In particular, we may choose $S$ to be the kernel of $p:L\otimes \Z/2\to (L/\mathsf{D}_n)/\text{Tor}\otimes\Z/2$. By construction, $w\not\in\text{im}(\iota)$.\\
    
    Choose a root $r\in L$ such that $w\cdot r=0$, following the argument as in the case of $\mathsf{A}_2$. Then $w+r$ is extremal of square $5$. Let $v\in \text{Min}(w+r+2\mathcal{L})$. Assume $v\neq \pm (w+r)$ and $v\cdot (w+r) \geqslant 0$. Write $v-w-r=2u$ where $u\in L$. Then $0\neq 4u^2 = (v-w-r)^2 \leqslant (w+r)^2 + v^2 =10$ implies $u$ is a root. Recall for any root $u$ that from the condition $(w\pm u)^2\geqslant 0$ we have $|w\cdot u| \leqslant 1$, and $|r\cdot u|\leqslant 1$ if $u\neq \pm r$ holds. Then we have
    \[
        5 \; = \; v^2 \; = \; (w+r+2u)^2 \; = \; 13 + 4(w\cdot u + r\cdot u)
    \]
implies either $w\cdot u = r\cdot u = -1$ or $u=-r$. Let $N$ be the set of roots $u$ such that $w\cdot u = r\cdot u = -1$. We conclude $\text{Min}(w+r+2\mathcal{L}) = \{\pm w \pm r\} \cup \{\pm (w+r+2u): u\in N\}$. Let $a\in \mathcal{L}$. We compute
    \begin{equation}
        \eta(\mathcal{L},w+r,a,1) \; = \; -(2 + |N|)w\cdot a - |N|r\cdot a - 2\sum_{u\in N}a\cdot u.\label{eq:etadn}
    \end{equation}
If we set $a=w$, using $w^2=3$, $w\cdot r = 0$ and the definition of $N$, from (\ref{eq:etadn}) we compute
    \begin{equation}
        \eta(\mathcal{L},w+r,w,1) \; = \; -6-|N|.\label{eq:etadnw}
    \end{equation}
If (\ref{eq:etadnw}) is nonzero modulo 4,  then $f_4(\mathcal{L})\geqslant ((w+r)^2-1)/2 = 2$ and we are done. So henceforth assume $|N|\equiv 2$ (mod 4).\\
    
    We represent $\mathsf{D}_n$ as the sublattice of $\mathbb{Z}^n$ of vectors whose coordinates sum to zero modulo $2$. Henceforth we identify the vectors in this representation of $\mathsf{D}_n$ with those in the root lattice of $L$. We may suppose that $r=(1,1,0,\ldots,0)=e_1+e_2$, since the automorphism group of $\mathsf{D}_n$ acts transitively on roots. Here we write $e_1,\ldots, e_n$ for the standard basis vectors of $\mathbb{Z}^n$. Then the vectors
    \begin{equation}
        r_{h,i}^\pm \; := \; -e_h \pm  e_i, \qquad h\in \{1,2\}, \;\; 3\leqslant i \leqslant n\label{eq:rij}
    \end{equation}
make up the set of roots $u$ such that $r\cdot u=-1$. For a fixed $i$ we have the two relations
    \begin{align}
        r + r_{1,i}^+ + r_{2,i}^- \; & = \; 0\label{eq:rootrel1}\\
        r + r_{1,i}^- + r_{2,i}^+ \; & = \; 0\label{eq:rootrel2}
    \end{align}
    Pairing (\ref{eq:rootrel1}) with $w$, we see that either $w\cdot r_{1,i}^+=w\cdot r_{2,i}^-=0$, or $w\cdot r_{1,i}^+=-w\cdot r_{2,i}^-=\pm 1$. Similarly for (\ref{eq:rootrel2}). Thus $N_i  := N\cap \{r_{1,i}^+,r_{1,i}^-,r_{2,i}^+,r_{2,i}^-\}$ has $0$, $1$ or $2$ elements. Furthermore, $N=\cup_{i=3}^n N_i$.\\

    Now let $I\subset\{3,\ldots,n\}$ with $|I|$ even. Then there exists $a\in \mathcal{L}$ such that
    \begin{equation}
        \eta(\mathcal{L},w+r,a,1) \; \equiv \; 2\sum_{i\in I} |N_i| \;\;\;\; (\text{mod  4}).\label{eq:nnn}
    \end{equation}
To see this, let $a$ be the vector corresponding to $(a_1,\ldots,a_n)\in\mathsf{D}_n$ which has $a_i=1$ if $i\in I$ and $a_i=0$ otherwise, and then compute (\ref{eq:nnn}) using (\ref{eq:etadn}). From (\ref{eq:nnn}) we may assume that either (I) $|N_i|=1$ for all $i$ or (II) $|N_i|\in\{0,2\}$ for all $i$. Indeed, if $|N_{j}|=1$ and $|N_k|\in\{0,2\}$ for some $j,k$ then setting $I=\{j,k\}$ in (\ref{eq:nnn}) yields $\eta(\mathcal{L},w+r,a,1)\equiv 2 \not \equiv 0$ (mod 4).\\

    Case (I). Suppose $|N_i|=1$ for all $3\leqslant  i \leqslant n$. Then $|N|=\sum |N_i| = n-2$. Having assumed $|N|\equiv 2$ (mod 4), we conclude $n\equiv 0$ (mod 4). Set $r_1:=e_1-e_2\in \mathsf{D}_n$. Since $\smash{r_1 + r_{1,i}^+ = r_{2,i}^+}$, and $|N_i|=1$ implies one of $\smash{r_{1,i}^+}$ or $\smash{r_{2,i}^+}$ is orthogonal to $w$ and the other has inner product $\pm 1$ with $w$, we obtain $|w\cdot r_1|=1$. In a similar fashion, for each $3\leqslant i\leqslant n$ let $\smash{s_i,t_i\in\{r_{1,i}^+,r_{1,i}^-\}}$ be such that $|w\cdot s_i|=1$ and $w\cdot t_i=0$. For $2\leqslant i \leqslant n/2$ set $r_i:=s_{2i-1}-t_{2i}$. This is a vector in $\mathsf{D}_n$ whose $(2i-1)^\text{th}$ and $2i^\text{th}$ entries are $\pm 1$, with all other coordinates zero. Then $r_1,\ldots,r_{n/2}$ are orthogonal roots all satisfying $|w\cdot r_i|=1$. Since $w\not\in \text{im}(\iota)$, its length is strictly greater than that of its projection onto the span of the subspace in $\mathsf{D}_n\otimes\R$ generated by the $r_i$:
    \begin{equation}
        3 \; = \; w^2  \; > \; \sum_{i=1}^{n/2} \frac{|w\cdot r_i|^2}{r_i^2} \; = \; n/4.\label{eq:proj}
    \end{equation}
Recalling $n\geqslant 4$ and $n\equiv 0$ (mod 4), we must have $n\in\{4,8\}$, i.e. $R\in\{\mathsf{D}_4,\mathsf{D}_8\}$. Before considering these two cases separately, we determine one more constraint. Suppose $\smash{N_j=\{r_{1,j}^+\}}$ and $\smash{N_k = \{r_{2,k}^+\}}$ for some $j\neq k$; the superscripts here are not important. Then $\smash{u=r_{1,j}^+-r_{1,k}^-}$ is a root for which $u\cdot w = -2$, a contradiction. Thus $\smash{N_i = \{r_{h,i}^{\sigma_i}\}}$ for each $i$, for some uniform $h\in\{1,2\}$, and each $\sigma_i\in\{\pm\}$. We conclude that after perhaps reflecting some coordinates in the range $3\leqslant i\leqslant n$ and permuting the first two coordinates in our representation of $\mathsf{D}_n$ we have $\smash{N=\{r_{1,3}^+,\ldots,r_{1,n}^+\}}$.\\
    
    Now suppose $R=\mathsf{D}_4$. Setting $w_1=w$, we choose $w_2,\ldots,w_k$ of minimal norm such that $[w_i]\not\in \text{ker}(p)+\sum_{j<i} [w_j]$. We may suppose each $w_i^2=3$, or else we are done. Our previous arguments show $|w_i\cdot w_j|\leqslant 1$ for $i\neq j$ and $|w_i\cdot u|\leqslant 1$ for all roots $u$. We only need to do this for $k=3$, which is possible because there are no definite unimodular lattices of rank $<4+3$ with root lattice $\mathsf{D}_4$; the first non-diagonal definite unimodular lattice, by rank, is $E_8$. By our assumption from the previous paragraph, $N=\{r_{1,3}^+,r_{1,4}^+\}$. Define the dual lattice of $\mathsf{D}_n$ to be $\mathsf{D}_n^\ast=\{x\in\mathsf{D}_n\otimes\R: \; x\cdot y \in \Z,\; \forall y\in\mathsf{D}_n\}$, and let
    \[
        L \longrightarrow \mathsf{D}_n^\ast, \qquad w\longmapsto \overline{w}
    \]
denote projection. The values $w\cdot u$ for all roots $u\in \mathsf{D}_4$ are determined and given by column (i) in Table \ref{table:d4}, which lists one root for each pair $\{u,-u\}\in \text{Roots}(\mathsf{D}_4)/\pm$. In particular, we see that $\overline{w}=(\frac{1}{2},-\frac{1}{2},-\frac{1}{2},-\frac{1}{2})\in\mathsf{D}_4^\ast$. Note $w$ is orthogonal to exactly half the roots in $\mathsf{D}_4$. We may also assume case (I) for $w_2$ and $w_3$, each with respect to some orthogonal root. (If either is case (II), move to case (II).) Then, just as was established for $w$, each of $w_2,w_3$ is orthogonal to half the roots of $\mathsf{D}_4$. Thus two of $w_1,w_2,w_3$ are orthogonal to a common root. Without loss of generality, suppose these two vectors are $w=w_1$ and $v\in\{w_2,w_3\}$, and that the orthogonal root is $r$. Recalling $|N|\equiv 2$ (mod 4), formula (\ref{eq:etadn}) yields
    \begin{equation}
        \eta(\mathcal{L},w+r,v,1) \; \equiv \; 2(r_{1,3}^+ + r_{1,4}^+)\cdot v \;\; (\text{mod 4}).\label{eq:etarootorth}
    \end{equation}
    Thus we may assume that $v$ is either orthogonal to $N$ or pairs non-trivially to $\pm 1$ with both of its vectors. Combining this with the constraints for $v$ previously determined for $w$, the pairings of $v$ with the roots of $\mathsf{D}_4$ must be given by one of columns (i)-(iv) in Table (\ref{table:d4}). In particular, $\overline{v}=\pm \overline{w}$ or $\overline{v}=\pm (\frac{1}{2},-\frac{1}{2},\frac{1}{2},\frac{1}{2})$. The case of $\mathsf{D}_4$ will now be completed by constructing an extremal vector $x$ of square 4 such that $\eta(\mathcal{L},x)\not\equiv 0$ (mod 4), following the cases of $\mathsf{A}_1$ and $\mathsf{A}_2$ above. There are two cases to consider: $w\cdot v=0$ and $w\cdot v=\pm 1$.\\
    
\begin{figure}[t]
\centering{
\begin{tabular}{lrrrr}
$\text{Roots}(\mathsf{D}_4)/\pm$  & (i) & (ii)  & (iii) & (iv) \\
\hline
$(\phantom{-}1,\phantom{-}1,\phantom{-}0,\phantom{-}0)$ & $0$ & $0$ & $0$ & $0$ \\
$(\phantom{-}1,-1,\phantom{-}0,\phantom{-}0)$ & $1$ & $-1$ & $1$ & $-1$ \\
$(-1,\phantom{-}0,\phantom{-}1,\phantom{-}0)$  & $-1$ & $1$ & $0$ & $0$ \\
$(-1,\phantom{-}0,-1,\phantom{-}0)$ & $0$ &  $0$ & $-1$ & $1$ \\
$(-1,\phantom{-}0,\phantom{-}0,\phantom{-}1)$ & $-1$ & $1$ & $0$ & $0$ \\
$(-1,\phantom{-}0,\phantom{-}0,-1)$ & $0$ & $0$ & $-1$ & $1$ \\
$(\phantom{-}0,-1,-1,\phantom{-}0)$  & $1$ & $-1$ & $0$  & $0$ \\
$(\phantom{-}0,-1,\phantom{-}1,\phantom{-}0)$  & $0$ & $0$ & $1$ & $-1$ \\
$(\phantom{-}0,-1,\phantom{-}0,-1)$ & $1$ & $-1$ & $0$ & $0$ \\
$(\phantom{-}0,-1,\phantom{-}0,\phantom{-}1)$ & $0$ & $0$ & $1$ & $-1$ \\
$(\phantom{-}0,\phantom{-}0,\phantom{-}1,\phantom{-}1)$ & $-1$ & $1$ & $1$ & $-1$ \\
$(\phantom{-}0,\phantom{-}0,\phantom{-}1,-1)$ & $0$ & $0$ & $0$ & $0$ \\
\end{tabular}
}
\captionof{table}[]{}\label{table:d4}
\end{figure}
    
    First, suppose $w\cdot v =0$. Upon possibly replacing $v$ with $-v$, the pairings of $v$ with $\mathsf{D}_4$ are given by either (i) or (iii) in Table \ref{table:d4}. Then $x=w+v+s$ is of square $4$, where $s=-r_1=e_2-e_1$. As usual, if $x+2u$ is extremal, then $u$ is a root, and $(x+2u)^2=4$ implies $u\cdot x=-2$, and thus
    \begin{equation}
        \text{Min}(x+2\mathcal{L}) \; = \; \{\pm x\}\cup\{\pm(x+2u): \; u\cdot (w+v+s)=-2,\; u^2=2, \; u\in L\}.\label{eq:mind41}
    \end{equation}
    Now each of $y\in \{w,v,s\}$ has $|y\cdot u|\leqslant 1$ for any root $u\neq \pm r_1$, and so $u$ in (\ref{eq:mind41}) must be orthogonal to one of the three, and have pairing $-1$ with the other two. If $v$ has pairings given by (i) in Table \ref{table:d4}, then the set of such $u$ is given by $\{e_3+e_4\}$. Thus $\text{Min}(x+2\mathcal{L})/\pm$ has 2 elements, implying $f_4(\mathcal{L})\geqslant 2$. If instead $v$ corresponds to column (iii) in Table (\ref{table:d4}), then there are no solutions to $u\cdot (w+v+s)=-2$, and $\text{Min}(x+2\mathcal{L})/\pm$ has 1 element, again implying $f_4(\mathcal{L})\geqslant 2$.\\
    
    Next, suppose $|w\cdot v|=1$. Upon possibly replacing $v$ with $-v$ we may suppose $w\cdot v = -1$. Then $x=w+v$ is extremal of square 4. Further, if $w+v+2u$ is extremal, then $(w+v+2u)^2=4$ implies $u\cdot (w+v)=-2$. Since for $y\in\{w,v\}$ and any root $u$ we have $|y\cdot u|\leqslant 1$, it follows that
    \begin{equation}
        \text{Min}(x+2\mathcal{L}) \; = \; \{\pm x\}\cup\{\pm(x+2u): \; u\cdot w = u\cdot v=-1,\; u^2=2,\; u\in L\}.\label{eq:mind42}
    \end{equation}
    If $v$ has pairings with $\mathsf{D}_4$ the same as that of $w$, in (i) of Table \ref{table:d4}, then there are 6 such roots $u$; for (ii) there are zero; and for (iii) and (iv) there is 1. Thus $\text{Min}(x+2\mathcal{L})/\pm$ has either 7, 1 or 2 elements, all nonzero modulo 4. Thus $f_4(\mathcal{L})\geqslant 2$. This completes the case of $\mathsf{D}_4$ within case (I).\\
    
    Now suppose $R=\mathsf{D}_8$. In this case we have assumed $\smash{N=\{r_{1,3}^+,r_{1,4}^+,r_{1,5}^+,r_{1,6}^+,r_{1,7}^+,r_{1,8}^+\}}$. This implies in particular that $\overline{w}=(\frac{1}{2},-\frac{1}{2},-\frac{1}{2},-\frac{1}{2},-\frac{1}{2},-\frac{1}{2},-\frac{1}{2},-\frac{1}{2})\in \mathsf{D}_8^\ast$. As in the case of $\mathsf{D}_4$ we can find $v\in L$ of minimal squared norm, which we may assume is $3$, such that $[v]\not\in \text{ker}(p)+[w]$ and $v\cdot r=0$. Indeed, to adapt the above argument, where $v\in\{w_2,w_3\}$, we only need to note that there are no unimodular definite lattices of rank $<8+3$ with root system $\mathsf{D}_8$; this is well-known, and is verified, for example, by \cite[Table 16.7]{conwaysloane}. The analogue of (\ref{eq:etarootorth}) here is
    \begin{equation}
        \eta(\mathcal{L},w+r,v,1) \; \equiv 2\sum_{i=3}^8 r_{1,i}^+\cdot v \;\; (\text{mod 4})\label{eq:etanearend}
    \end{equation}
    The constraint that (\ref{eq:etanearend}) is zero modulo 4, along with the constraints previously determined for $w$, imply, after possibly an automorphism of our representation of $\mathsf{D}_8$ permuting and reflecting coordinates, that $\overline{w}=v_1$ and $\overline{v}\in\{\pm v_1,\pm v_2,\pm v_3,\pm v_4\}$ where
    \begin{align*}
        v_1 & =  (+\tfrac{1}{2},-\tfrac{1}{2},-\tfrac{1}{2},-\tfrac{1}{2},-\tfrac{1}{2},-\tfrac{1}{2},-\tfrac{1}{2},-\tfrac{1}{2})\\
        v_2 & =  (+\tfrac{1}{2},-\tfrac{1}{2},-\tfrac{1}{2},-\tfrac{1}{2},-\tfrac{1}{2},-\tfrac{1}{2},+\tfrac{1}{2},+\tfrac{1}{2})\\
        v_3 & =  (+\tfrac{1}{2},-\tfrac{1}{2},-\tfrac{1}{2},-\tfrac{1}{2},+\tfrac{1}{2},+\tfrac{1}{2},+\tfrac{1}{2},+\tfrac{1}{2})\\
        v_4 & =  (+\tfrac{1}{2},-\tfrac{1}{2},+\tfrac{1}{2},+\tfrac{1}{2},+\tfrac{1}{2},+\tfrac{1}{2},+\tfrac{1}{2},+\tfrac{1}{2})
    \end{align*}
    \noindent Observe that $w-\overline{w}\in L\otimes \R$ has square 1, and is orthogonal to $\text{im}(\iota)$. Projecting $v$ onto the subspace spanned by $\text{im}(\iota)$ and $w-\overline{w}$ we obtain the following:
    \begin{equation}
        3 \; = \; v^2 \; > \; \left(v\cdot (w-\overline{w})\right)^2 + \overline{v}^2 \; = \; \left(v\cdot w-\overline{v}\cdot \overline{w}\right)^2 + 2.\label{eq:lastineqihope}
    \end{equation}
    First suppose $w\cdot v=0$. Then (\ref{eq:lastineqihope}) implies $\overline{w}\cdot \overline{v}=0$. We must have $\overline{v}=\pm v_3$. Upon possibly replacing $v$ by $-v$ we may assume $\overline{v}=v_3$. Then $x=w+v+s$ with $s=-r_1$ is extremal of square 4. The only root $u\in\mathsf{D}_8$ satisfying $u\cdot(w+v+s)=-2$ is $e_3+e_4$, and by (\ref{eq:mind41}) we have that $\text{Min}(x+2\mathcal{L})/\pm$ is of cardinality 2, implying $\eta(\mathcal{L},x)\equiv 2 \not\equiv 0$ (mod 4) and $f_4(\mathcal{L})\geqslant 2$.\\
    
    Now suppose $|w\cdot v|=1$. Upon possibly replacing $v$ with $-v$ we may assume $w\cdot v =-1$. Then (\ref{eq:lastineqihope}) implies $\overline{v}=-v_2$ or $\overline{v}=v_4$. When $\overline{w}=v_1$ and $\overline{v}=-v_2$ the only root $u\in\mathsf{D}_8$ satisfying $u\cdot w = u\cdot v = -1$ is $e_7+e_8$. When $\overline{w}=v_1$ and $\overline{v}=v_4$, the only such root is $-e_1+e_2$. In either case, (\ref{eq:mind42}) implies $\text{Min}(w+v+2\mathcal{L})/\pm$ has 2 elements, and thus $\eta(\mathcal{L},w+v)\equiv 2 \not\equiv 0$ (mod 4) and $f_4(\mathcal{L})\geqslant 2$. This completes the case of $\mathsf{D}_8$ within case (I), and of case (I) entirely.\\
    
    Case (II). Suppose $|N_i|\in\{0,2\}$ for $3\leqslant i \leqslant n$. Let $I_w\subset\{3,\ldots,n\}$ be the set of $i$ such that $|N_i|=2$. Since $|N|= 2|I_w|\equiv  2$ (mod 4), $I_w$ is nonempty. Recall $r_1=e_1-e_2$. Let $i\in I_w$. As $w$ pairs non-trivially with all of $\smash{r_{1,i}^+,r_{1,i}^-,r_{2,i}^+,r_{2,i}^-}$, and $\smash{r_1 + r_{1,i}^+ = r_{2,i}^+}$, we have $w\cdot r_1=0$. Then $w\cdot r=w\cdot r_1=0$ implies $w\cdot e_1=w\cdot e_2=0$, the latter computation holding in $L\otimes \R$. As $e_i = e_1+r_{1,i}^+$, we have $|w\cdot e_i|=1$ for $i\in I_w$ within $L\otimes\R$. Because $w\not\in\text{im}(\iota)$ we have
    \begin{equation}
        3 \; = \; w^2 \; > \; \overline{w}^2 \;= \; \sum_{i\in I_w} |w\cdot e_i|^2 \; = \; |I_w|.\label{eq:almostthere}
    \end{equation}
    With the constraint that $|I_w|$ is odd, this implies $|I_w|=1$. Without loss of generality we may assume $N=N_3$. After an automorphism of our representation of $\mathsf{D}_n$ we may assume $N=\{r_{1,3}^+,r_{2,3}^+\}$. In particular, $\overline{w}=-e_3=(0,0,-1,0,\ldots,0)\in\mathsf{D}_n^\ast$.\\
    
    Next, we claim $L\otimes\Z/2\neq \text{im}(\pi)+[w]$. Suppose to the contrary equality holds here. Then, since by assumption $\text{rank}(L)>n$, as follows from $\iota$ having kernel, we must have $\text{rank}(L)=n+1$, and that $\pi$ is injective. Note, however, that $[2e_1]\neq 0 \in \mathsf{D}_n\otimes\Z/2$ and $\pi([2e_1])$ pairs trivially with $\text{im}(\pi)+[w]$, contradicting the non-degeneracy of the pairing on $L\otimes \Z/2$, the latter of which follows from the unimodularity of $L$. This verifies the claim. Thus we may choose a vector $v$ of minimal norm such that $[v]\not\in\text{im}(\pi)+[w]$, which, as usual, we may suppose has $v^2=3$. We assume $v$ has the type of case (II) as well; otherwise move to the paragraph following case (II).\\
    
    Now $v$ satisfies (\ref{eq:almostthere}) with the provision that strict inequality may not hold, and with $|I_v|$ on the right side defined using some root orthogonal to $v$ in place of $r$. The inequality is not necessarily strict because we have not claimed $v\not\in\text{im}(\iota)$. We conclude $|I_v|\in \{1,3\}$. If $|I_v|=3$, then $v=\overline{v}$. Furthermore, after an automorphism of $\mathsf{D}_n$, we may suppose $\overline{v}=e_i-e_j-e_k$ for some distinct $i,j,k$, similar to the determination $\overline{w}=-e_3$ above. But then $v+e_j+e_k = \overline{v}+e_j+e_k$ is a vector of square 1 in $L$, a contradiction. Thus we may assume $|I_v|=1$.\\

    Let $R_w$ be the number of roots orthogonal to $w$. Then $\overline{w}=-e_3$ implies $R_w=2(n-1)(n-2)$. Similarly, since $|I_v|=1$, we have $R_v=R_w$. If $n\geqslant 5$, then $R_w+R_v = 4(n-1)(n-2)> 2n(n-1)$, the total number of roots in $\mathsf{D}_n$, so that $w$ and $v$ must share a common orthogonal root. If $n=4$ we may argue as in case (I), using that there are no unimodular lattices with root lattice $\mathsf{D}_n$ of rank less than $3+4$, to sequentially choose $w_2,w_3$ and then choose $v\in\{w_2,w_3\}$. Thus without loss of generality, $v$ and $w$ are both orthogonal to a common root, which we may suppose is $r$.\\
    
    It follows then that $\overline{v}=\pm e_i$ for some $3 \leqslant i \leqslant n$. First suppose $i\neq 3$. Without loss of generality we may assume $\overline{v}=-e_4$. Our minimality assumption on $v$ implies $|w\cdot v|\leqslant 1$. Suppose $w\cdot v=0$. Consider the extremal vector $x=w+v+s$ of squared norm 4, where $s=e_3+e_4$. There are no roots $u$ satisfying $u\cdot (w+v+s)=-2$, and so (\ref{eq:mind41}) implies $\text{Min}(x+2\mathcal{L})/\pm$ has 1 element, whence $f_4(\mathcal{L})\geqslant x^2/2= 2$. If instead $|w\cdot v|=1$, then consider $x=w\pm v$, with the sign chosen so that $x$ is extremal of square 4. There is only one root $u$ such that $u\cdot w= u\cdot v=-1$, and so (\ref{eq:mind42}) implies $\text{Min}(x+2\mathcal{L})/\pm$ has 2 elements, whence $f_4(\mathcal{L})\geqslant 2$.\\
    
    Now suppose $\overline{v}=\pm e_3$. Consider the case $w\cdot v=0$. Upon perhaps replacing $v$ by $-v$ we may assume $\overline{v}=-e_3$. Then $x=w+v+s$, with $s$ as before, is extremal of square 4. The only root $u$ satisfying $u\cdot (w+v+s)=-2$ is $e_3-e_4$, so (\ref{eq:mind41}) implies $\text{Min}(x+2\mathcal{L})/\pm$ has 2 elements, whence $f_4(\mathcal{L})\geqslant x^2/2= 2$. Now consider $\overline{v}=\pm e_3$ and $|w\cdot v|=1$. Upon perhaps replacing $v$ with $-v$ we may suppose $w\cdot v=-1$. Then $x=w+v$ is an extremal vector of square 4. The roots $u$ satisfying $u\cdot v = u\cdot w=-1$ are (i) none, if $\overline{v}=+e_3$, or (ii) $e_3\pm e_i$ for $i\neq3$, if $\overline{v}=-e_3$, of which there are $2(n-1)$ many. Then (\ref{eq:mind42}) implies $\text{Min}(x+2\mathcal{L})/\pm$ has either (i) 1 element or (ii) $1+ 2(n-1)$ elements, both of which are odd numbers, and hence imply $f_4(\mathcal{L})\geqslant 2$. This completes case (II).\\

In the above treatment we assumed the vectors of square $3$ used were all in either case (I) or case (II). Suppose we encounter at least one of each type. Then by the argument in case (I), $n\in \{4,8\}$. As there are no unimodular definite lattices with root lattice $\mathsf{D}_4$ or $\mathsf{D}_8$ of rank $<16$ other than $E_8$ and $\Gamma_{12}$, we can sequentially choose square $3$ extremal vectors so that we have $3$ such vectors in either case (I) or case (II), which is enough to make either of the above arguments go through. This completes the case $\mathsf{D}_n$ ($n\geqslant 4$) entirely.\\
    
    Finally, suppose $L$ has no roots. Let $w\in L$ be of minimal nonzero norm. Then $w^2\geqslant 3$ and by the usual argument $\text{Min}(w+2\mathcal{L})=\{w,-w\}$. If $w^2\geqslant 4$ then $f_4(\mathcal{L})\geqslant \lceil f_2(\mathcal{L})/2\rceil \geqslant 2$. So suppose $w^2=3$. Let $v\in L$ be of minimal norm such that $[v]\not \in \{0, [w]\}\subset L\otimes \Z/2$. Then $v$ is extremal and $\text{Min}(v+2L)=\{v,-v\}$ as in the proof of Lemma \ref{lemma:e8}. If $v^2\geqslant 4$ we are done. So suppose $v^2=3$.\\
    
    For $s,t\in L$ of square 3 and $s\cdot t\leqslant 0$ we have $(s +t)^2=6 + 2s\cdot t\leqslant  6 $. Because $L$ has no vectors of square 2, the vector $s+t$ has square $4$ or $6$. In the former case, $\text{Min}(s+t+2\mathcal{L})=\{\pm(s+t)\}$ and $f_4(\mathcal{L})\geqslant 2$. So we may assume $(s+t)^2=6$, or equivalently $s\cdot t=0$. In particular, we may assume that any two vectors $s,t\in L$ of square 3 with $s\neq \pm t$ are orthogonal.\\
    
    Now consider $x=w+v$. This is extremal of square $6$. If $z=x+2u\in \text{Min}(x+2\mathcal{L})$ and $z\neq \pm x$, $z\cdot x\geqslant 0$, then $0\neq 4u^2 = (x-z)^2 \leqslant 12$ implies $u^2=3$, and $6=(x+2u)^2$ implies $u\cdot w+u\cdot v=-3$. By the assumption made at the end of the previous paragraph, we must have $u=-w$ or $u=-v$. Thus $\text{Min}(x+2\mathcal{L})=\{\pm w\pm v\}$. We then compute
    \begin{align*}
        \eta(\mathcal{L},x,wv,2) \; &= \; (-1)^{\left(\frac{x+w+v}{2}\right)^2}(w\cdot(w+v))(v\cdot(w+v)) \\
	& \qquad  + (-1)^{\left(\frac{x+w-v}{2}\right)^2}(w\cdot(w-v))(v\cdot(w-v)) \\
	\; & = \; 9+9 = 18 \not\equiv 0 \;\; \text{(mod 4)}.
    \end{align*}
    Thus $f_4(\mathcal{L})\geqslant (x^2-2)/2=2$. This completes the case of $L$ having no roots, and, having completed all cases, concludes the proof of the lemma.
    \end{proof}

    \vspace{0.25cm}

We obtain the following, which implies Theorem \ref{thm:genus2}.

\vspace{0.25cm}

\begin{corollary}\label{cor:genus2}
Suppose $Y$ is an integer homology 3-sphere which is $(+1)$ surgery on a knot $K$ in an integer homology 3-sphere with $g_{4,2}(K)=2$. If $X$ is a smooth, compact, oriented and definite 4-manifold bounded by $Y$ with non-diagonal lattice $\mathcal{L}$ and no 2-torsion, then the reduced part of $\mathcal{L}$ is isomorphic to either $E_8$ or $\Gamma_{12}$.
\end{corollary}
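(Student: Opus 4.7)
The plan is to mirror the proof of Corollary \ref{cor:genus1} from the genus~$1$ setting, now invoking the refined algebraic input of Lemma \ref{lemma:f4} specific to the genus~$2$ case. Essentially all of the work has been front-loaded into that lemma; the corollary itself should come out by squeezing $f_4(\mathcal{L})$ between a lower bound from non-diagonality and an upper bound from the $4$-ball genus hypothesis.

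First I would apply Corollary \ref{cor:link} to $X$, reversing orientation of $X$ (and of the surgery and $\mathbb{Z}/2$-homology ball cobordisms) if $X$ happens to be positive definite, so that the negative-definite hypothesis of the corollary is met while the lattice invariant $f_4$, which depends only on the underlying lattice up to sign, is unchanged. Since $g_{4,2}(L)=2\leqslant 128$, the hypothesis of Proposition \ref{prop:nilppartial} is satisfied, and Corollary \ref{cor:link} delivers
\[
f_4(\mathcal{L}) \;\leqslant\; \lceil g_{4,2}(L)/2\rceil \;=\; 1.
\]
Second, the non-diagonality hypothesis combined with Proposition \ref{prop:diag} yields $f_4(\mathcal{L})\geqslant 1$. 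Together these force the exact equality $f_4(\mathcal{L})=1$, at which point Lemma \ref{lemma:f4} applies verbatim and identifies the reduced part of $\mathcal{L}$ as either $E_8$ or $\Gamma_{12}$, as claimed.

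The main obstacle has already been settled by Lemma \ref{lemma:f4}, whose proof is a lengthy case analysis over the indecomposable root systems $\A_n$, $\D_n$, $\E_6$, $\E_7$, $\E_8$ and the rootless case; given that lemma, the present corollary is essentially immediate. The only genuine bookkeeping point is to verify that the orientation-reversal used above does not disturb the $\mathbb{Z}/2$-homology cobordism/surgery setup on which Corollary \ref{cor:link} rests, and that the no-$2$-torsion hypothesis on $X$ together with the $\mathbb{Z}/2$-homology ball bounding $L$ ensures the torsion hypothesis of Theorem \ref{thm:char4} for the closed $4$-manifold that is actually fed into the inequality.
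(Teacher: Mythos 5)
Your proposal is correct and follows essentially the same route as the paper: Corollary \ref{cor:link} gives $f_4(\mathcal{L})\leqslant\lceil g_{4,2}(L)/2\rceil=1$, non-diagonality forces $f_4(\mathcal{L})=1$, and Lemma \ref{lemma:f4} then identifies the reduced part. The extra remarks about orientation reversal and the torsion hypothesis are sound bookkeeping that the paper leaves implicit.
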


\vspace{0.15cm}

\begin{proof}
    Corollary \ref{cor:link} implies $f_4(\mathcal{L})\leqslant 1$. Since $\mathcal{L}$ is not diagonal, $f_4(\mathcal{L})=1$. By Lemma \ref{lemma:f4}, the reduced part of $\mathcal{L}$ must be one of $E_8$ or $\Gamma_{12}$.
\end{proof}

\vspace{0.15cm}

\begin{figure}[t]
\centering
\includegraphics[scale=1]{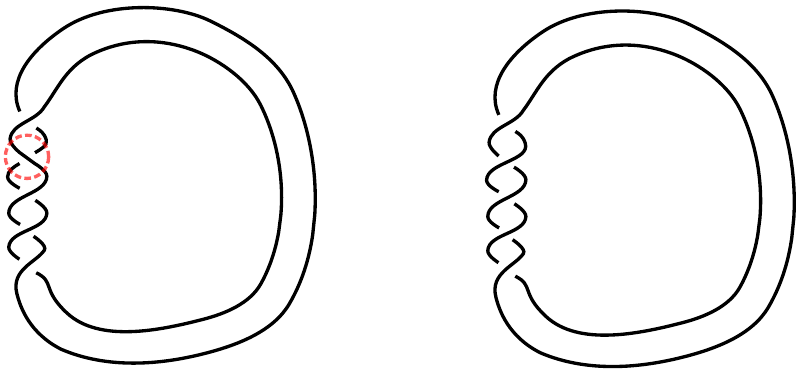}
\caption{{\small{The (2,3) torus knot, depicted on the left, is transformed into the (2,5) torus knot, on the right, by changing the encircled negative crossing to a positive one.}}}\label{fig:2325}
\end{figure}

\begin{proof}[Proof of Corollary \ref{cor:2,5}] The manifold $-\Sigma(2,5,9)$ is $+1$ surgery on the $(2,5)$ torus knot of genus 2. The corresponding surgery cobordism provides the form $\langle + 1 \rangle$. As we saw at the start of this section, the canonical positive definite plumbing bounded by $-\Sigma(2,5,9)$ is isomorphic to $\Gamma_{12}$. Next, we observe from Figure \ref{fig:2325} that the $(2,5)$ torus knot is obtained from the $(2,3)$ torus knot by a positive crossing change. This induces a cobordism from +1 surgery on the latter to that of the former with intersection form $\langle + 1 \rangle$. (This is a technique used extensively in \cite{cg}.) Attaching to this cobordism the $E_8$ plumbing bounded by $-\Sigma(2,3,5)$ yields $E_8 \oplus \langle + 1 \rangle $. Finally, connect summing these three examples with copies of $\cp$ yields all lattices listed in Theorem \ref{thm:genus2}, except for $E_8$.\\

The author is aware of two constructions realizing $E_8$. The first has been communicated to the author by Motoo Tange and uses Kirby calculus. The second appears in the author's work with Golla \cite{golla-scaduto}, and uses the topology of rational cuspidal curves.
\end{proof}

The proof of Theorem \ref{thm:genus2} only uses input from instanton theory and some algebra. It is now clear that Heegaard Floer $d$-invariants were not necessary to prove Theorem \ref{thm:genus1}: starting from Theorem \ref{thm:genus2}, the extra input is our computation $f_8(\Gamma_{12})\geqslant 2$ from the previous section, and the constraint for $f_8(\mathcal{L})$ given in Corollary \ref{cor:link}.\\

On the other hand, some of the work in proving Theorem \ref{thm:genus2} may be supplemented by the $d$-invariant, as done in the previous section for Theorem \ref{thm:genus1}. Combining work of Ni and Wu \cite[Prop.1.6]{niwu} and Rasmussen \cite[Thm.2.3]{rasmussen} gives the inequalities
\begin{equation}
    0 \; \leqslant \; -d(Y)/2 \; \leqslant \; \lceil g_4(K)/2 \rceil \label{eq:dgenus}
\end{equation}
where $Y$ is $+1$ surgery on the knot $K$. If $g_4(K)=2$, then as in the case $g_4(K)=1$, the only possible non-diagonal definite lattices that can occur, up to diagonal summands, are the 14 listed in Table \ref{tab:table1}. Using Corollary \ref{cor:link} and Lemma \ref{lemma:indecomp}, of those 14 only $E_8$, $D_{12}=\Gamma_{12}$, $A_{15}$ and $O_{23}$ can occur. As already listed in the proof of Theorem \ref{thm:genus1} of the previous section, we have $f_4(A_{15}), f_4(O_{23})\geqslant 2$, both of which are special cases of the computations in the proof of Lemma \ref{lemma:f4}.

\section{More examples}\label{sec:more}

The question of which unimodular definite lattices arise from smooth 4-manifolds with no 2-torsion in their homology bounded by a fixed homology 3-sphere $Y$ only depends on the $\Z/2$ homology cobordism class of $Y$. It is natural to wonder whether we can find linearly independent elements in the $\Z/2$ homology cobordism group $\Theta_{\Z/2}^3$ all of which bound the same set of definite unimodular lattices. If one restricts to homology cobordism classes that only bound diagonal lattices, one needs only examine the infinitely generated kernels of the invariants $d$ and $h$, for example.\\

We may then consider classes that bound the same lattices as the Poincar\'{e} sphere. For this, recall that Furuta \cite{furuta} and Fintushel and Stern \cite{fsi} used instantons to show that the family $[\Sigma(2,3,6k-1)]$ for $k\geqslant 1$ is an infinite linearly independent set in $\Theta_{\Z/2}^3$. The manifold $-\Sigma(2,3,6k-1)$ is $+1$ surgery on a genus 1 twist knot with $2k-1$ half twists. However, not all of these classes can bound the same lattices as $[\Sigma(2,3,5)]$. Indeed, the Rochlin invariant of $-\Sigma(2,3,6k-1)$ is congruent to $k$ (mod 2), so the lattice $E_8$ cannot occur when $k$ is even. In fact, here is an example where the list of lattices is the same as that of the Poincar\'{e} sphere except for $E_8$:

\vspace{.25cm}

\begin{corollary}\label{cor:2,3,11}
    If a smooth, compact, oriented and definite 4-manifold with no 2-torsion in its homology has boundary $-\Sigma(2,3,11)$, then its intersection form is equivalent to one of 
    \[
        \langle +1\rangle^{n}\;\;(n\geqslant 1),\qquad E_8\oplus \langle +1\rangle^{n}\;\;(n\geqslant 1),
    \]
    and all of these possiblities occur.
\end{corollary}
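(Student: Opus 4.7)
The plan is to combine Theorem \ref{thm:genus1} with the Rochlin invariant obstruction highlighted in the paragraph preceding the statement, together with explicit realizations of the two listed families. Since $-\Sigma(2,3,11)$ is $+1$ surgery on a genus-$1$ twist knot, Theorem \ref{thm:genus1} reduces the possible intersection forms of a smooth, compact, oriented, definite $4$-manifold $X$ with no $2$-torsion in its homology and boundary $-\Sigma(2,3,11)$ to $\langle \pm 1\rangle^n$ and $E_8\oplus\langle +1\rangle^n$ with $n\geqslant 0$.

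The first task is to rule out the intersection form $E_8$ on the nose (the $n=0$ case of $E_8\oplus\langle +1\rangle^n$) via a Rochlin argument. The Brieskorn homology sphere $-\Sigma(2,3,11)$ has Rochlin invariant $\mu\equiv 0$ (mod 2); this is the $k=2$ instance of $\mu(\Sigma(2,3,6k-1))\equiv k$ (mod 2) cited just above. If $X$ had intersection form $E_8$ and no $2$-torsion in its homology, then since $E_8$ is even the characterization $\langle w_2(X),[\Sigma]\rangle \equiv \Sigma\cdot\Sigma$ (mod 2) together with surjectivity of $H_2(X;\Z)\to H_2(X;\Z/2)$ (from the no-$2$-torsion hypothesis) forces $w_2(X)=0$, so $X$ is spin. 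Then $\sigma(X)=+8$ yields $\mu(\partial X)\equiv 1$ (mod 2), a contradiction.

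For the realizations, $\langle +1\rangle^n$ with $n\geqslant 1$ comes from the positive-definite trace cobordism of $+1$ surgery on the relevant twist knot, connect-summed with $n-1$ copies of $\cp$. To realize $E_8\oplus\langle +1\rangle^n$ with $n\geqslant 1$, I would mimic the proof of Corollary \ref{cor:2,5}: starting from the canonical $E_8$ plumbing bounded by $-\Sigma(2,3,5)$, attach a surgery cobordism induced by a single positive crossing change relating the $(2,3)$ torus knot (on which $+1$ surgery yields $-\Sigma(2,3,5)$) to the genus-$1$ twist knot with three half twists (on which $+1$ surgery yields $-\Sigma(2,3,11)$). By the Cochran--Gompf technique used in Figure \ref{fig:2325}, such a cobordism has intersection form $\langle +1\rangle$, so the total $4$-manifold bounds $E_8\oplus\langle +1\rangle$, and further $\cp$ connected sums produce the rest of the family. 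The step most likely to be delicate is verifying this concrete crossing change, namely producing compatible planar diagrams of the $(2,3)$ torus knot and the genus-$1$ twist knot in question that differ by a single positive crossing.

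Finally, $\langle -1\rangle^n$ with $n\geqslant 1$ is excluded by Donaldson's diagonalization theorem applied to the closed $4$-manifold obtained by gluing any such hypothetical example (with orientation reversed) to the $E_8\oplus\langle +1\rangle$ realization just constructed: the resulting intersection form is $E_8\oplus\langle +1\rangle^{n+1}$, which is non-diagonal and thus forbidden.
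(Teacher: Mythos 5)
Your proposal is correct and matches the paper's argument: Theorem \ref{thm:genus1} gives the candidate list, the Rochlin invariant (which is $0$ for $-\Sigma(2,3,11)$, the $k=2$ case of $\mu(\Sigma(2,3,6k-1))\equiv k$) rules out a spin filling with form $E_8$ itself, and Donaldson's theorem excludes $\langle -1\rangle^n$ (one should take $n\geqslant 0$ here so as to also exclude the trivial form) once $E_8\oplus\langle +1\rangle$ is realized. For that realization the paper's primary route is the canonical positive definite plumbing of $-\Sigma(2,3,11)$, whose rank-$9$ unimodular lattice contains $E_8$ spanned by the weight-$2$ nodes and is therefore $E_8\oplus\langle +1\rangle$; your crossing-change cobordism is precisely the alternative the paper also gives, with the sign of the crossing change being the one detail to pin down, as you note.
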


\vspace{.25cm}

There are two ways to see that $-\Sigma(2,3,11)$ bounds the lattice $E_8\oplus \langle +1 \rangle$. For one, its canonical positive definite plumbing graph is given as follows:
\begin{center}
\begin{tikzpicture}
	\draw (0,0) -- (7,0);
	\draw (2,0) -- (2,1);
	\draw[fill=black] (0,0) circle(.1);
	\draw[fill=black] (1,0) circle(.1);
	\draw[fill=black] (2,0) circle(.1);
	\draw[fill=black] (2,1) circle(.1);
	\draw[fill=black] (3,0) circle(.1);
	\draw[fill=black] (4,0) circle(.1);
	\draw[fill=black] (5,0) circle(.1);
	\draw[fill=black] (6,0) circle(.1);
	\draw[fill=black] (7,0) circle(.1);
	\node at (7,.4) {$3$};
\end{tikzpicture}
\end{center}
The unmarked nodes represent vectors of square $2$, and together form a sublattice isomorphic to $E_8$; thus the lattice must be isomorphic to $E_8\oplus \langle +1\rangle$. Alternatively, we note that the twist knot with $3$ half twists is obtained from the $(2,3)$ torus knot by a changing a positive crossing to a negative crossing, and argue as in the proof of Corollary \ref{cor:genus2}. We note that both arguments generalize to show that $-\Sigma(2,3,6k-1)$ bounds $E_8\oplus \langle +1\rangle^{k-1}$.\\

One might hope for examples of $-\Sigma(2,3,6k-1)$ bounding $E_8$ when $k$ is odd other than $k=1$. The determination of all such $k$ seems to be an open problem, but has been studied by Tange, who shows  \cite[Thm. 1.7]{tange} that this is the case for $k=3,5,\ldots,23,25$ and $k=29$.

\vspace{.25cm}

\begin{corollary}\label{cor:2,3,12n+5}
    The linearly independent elements $[\Sigma(2,3,12n+5)]\in \Theta_{\Z/2}^3$ for $ 0 \leqslant n\leqslant 12$, $n=14$ bound the same definite lattices arising from smooth 4-manifolds with no 2-torsion.
\end{corollary}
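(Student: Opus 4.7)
The plan is to combine Theorem~\ref{thm:genus1} with Tange's construction of $E_8$ fillings and the $\Z/2$-homology cobordism invariance of the bounding-lattice set noted at the start of this section. For each $n$ in the specified range, set $k = 2n+1 \in \{1, 3, 5, \ldots, 25, 29\}$, so $-\Sigma(2,3,12n+5) = -\Sigma(2,3,6k-1)$ is $+1$ surgery on a genus 1 twist knot. Theorem~\ref{thm:genus1} then restricts the definite lattices it bounds (with no 2-torsion in homology) to be among $\langle \pm 1 \rangle^m$ and $E_8 \oplus \langle +1 \rangle^m$.

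Next, I would realize all lattices on this list except $\langle -1 \rangle^m$. The surgery cobordism from $S^3$ yields $\langle +1 \rangle$, and connect sums with $\cp$ produce $\langle +1 \rangle^m$ for $m \geqslant 1$. Tange's theorem \cite[Thm.~1.7]{tange} provides an $E_8$ filling of $-\Sigma(2,3,6k-1)$ for each $k$ in our list (the case $k = 1$ is classical), which after connect sums with $\cp$ gives $E_8 \oplus \langle +1 \rangle^m$ for all $m \geqslant 0$. To exclude $\langle -1 \rangle^m$, I would glue a hypothetical $\langle -1 \rangle^m$ filling to an $E_8$ filling: the resulting closed positive definite 4-manifold would have non-diagonal form $E_8 \oplus \langle +1 \rangle^m$, contradicting Donaldson's diagonalization theorem.

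This determines the common set of definite unimodular lattices bounded by each $-\Sigma(2,3,12n+5)$, and by orientation reversal the same is true for each $\Sigma(2,3,12n+5)$ (with all lattices negated). Linear independence in $\Theta_{\Z/2}^3$ is inherited from the result of Furuta \cite{furuta} and Fintushel--Stern \cite{fsi} that $\{[\Sigma(2,3,6k-1)] : k \geqslant 1\}$ is linearly independent, cited in the preceding discussion.

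The only nontrivial ingredient is Tange's realization of $E_8$; the restriction $0 \leqslant n \leqslant 12$ together with $n = 14$ is precisely the set of $n$ for which Tange has constructed an $E_8$ filling of $-\Sigma(2,3,12n+5)$, and the expected obstacle to extending the corollary to all odd $k$ is a matter of producing such fillings for the remaining values of $k$ (e.g., $k = 27$ and $k \geqslant 31$).
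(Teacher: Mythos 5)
Your proposal is correct and follows essentially the same route the paper takes: restrict the possible lattices via Theorem \ref{thm:genus1} (each $-\Sigma(2,3,12n+5)$ being $+1$ surgery on a genus 1 twist knot with $k=2n+1$), realize $\langle +1\rangle^m$ by the surgery cobordism and $E_8\oplus\langle +1\rangle^m$ by Tange's fillings for exactly the listed values of $k$, rule out $\langle -1\rangle^m$ by Donaldson's theorem, and inherit linear independence from Furuta and Fintushel--Stern. Your closing remark correctly identifies why the list stops where it does.
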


\vspace{.25cm}

Tange has informed the author that this list may be enlarged to include $n=13,15$. Yet another example that bounds the same set of lattices as the Poincar\'{e} sphere $-\Sigma(2,3,5)$ is the Brieskorn sphere $-\Sigma(3,4,7)$, whose positive definite plumbing graph has associated lattice isomorphic to $E_8$, and which is $+1$ surgery on the knot $10_{132}$ of smooth 4-ball genus 1.\\

\begin{figure}[t]
\centering
\includegraphics[scale=.8]{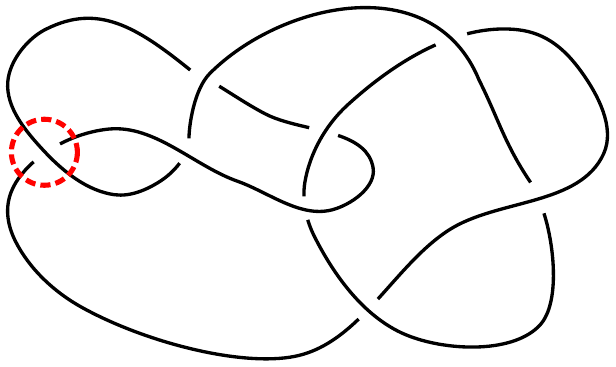}
\caption{{\small{The (3,4) torus knot, also known as $8_{19}$ in Rolfsen notation, is transformed into the (2,5) torus knot by changing the encircled positive crossing to a negative crossing.}}}\label{fig:34}
\end{figure}

In the introduction it was mentioned that $-\Sigma(3,4,11)$, obtained from $+1$ surgery on the $(3,4)$ torus knot of genus 3, is a natural candidate to consider beyond Theorem \ref{thm:genus2}. Here the Heegaard Floer $d$-invariant is $-2$, so the only possible non-diagonal reduced definite lattices that can occur are those in Table \ref{tab:table1}. We expect most of these lattices are ruled out by our obstructions. We show in \cite{golla-scaduto} that the lattices $\langle +1 \rangle$, $E_8$, $\Gamma_{12}$, $E_{7}^2$ and $A_{15}$ occur. As the proofs there use the topology of rational cuspidal curves, here we only explain how to realize $\langle +1 \rangle$, $E_8\oplus \langle + 1\rangle$, $\Gamma_{12}\oplus \langle +1 \rangle$ and $A_{15}$.\\

First, $\langle + 1\rangle$ is realized by the surgery cobordism as in all previous examples. Next, the rank 15 lattice $A_{15}$ arises as the lattice of the positive definite plumbing bounded by $-\Sigma(3,4,11)$, given by:
\begin{center}
\begin{tikzpicture}
	\draw (0,0) -- (13,0);
	\draw (3,0) -- (3,1);
	\draw[fill=black] (0,0) circle(.1);
	\draw[fill=black] (1,0) circle(.1);
	\draw[fill=black] (2,0) circle(.1);
	\draw[fill=black] (3,0) circle(.1);
	\draw[fill=black] (4,0) circle(.1);
	\draw[fill=black] (5,0) circle(.1);
	\draw[fill=black] (6,0) circle(.1);
	\draw[fill=black] (7,0) circle(.1);
	\draw[fill=black] (8,0) circle(.1);
	\draw[fill=black] (9,0) circle(.1);
	\draw[fill=black] (10,0) circle(.1);
	\draw[fill=black] (11,0) circle(.1);
	\draw[fill=black] (12,0) circle(.1);
	\draw[fill=black] (13,0) circle(.1);
	\draw[fill=black] (3,1) circle(.1);
	\node at (3.45,1.25) {$3$};
\end{tikzpicture}
\end{center}
The unmarked nodes have weight $2$. Indeed, viewing $\A_{15}$ as the subset of $\Z^{16}$ consisting of vectors whose coordinates sum to zero, the lattice $A_{15}$ may be defined as
\[
    A_{15} \; = \; \A_{15} \cup (g+\A_{15}) \cup (2g+\A_{15}) \cup (3g+\A_{15})
\]
where $g=\frac{1}{4}(-1^{12},3^{4})\in\A_{15}^\ast$ and superscripts denote repeated entries. Then the top weight 3 node in the plumbing graph represents $g$, and the other nodes are the 14 roots of $\A_{15}$ of the form $(0,\ldots,0,1,-1,0,\ldots,0)$ with the far left entry equal to zero. Finally, the lattices $E_8\oplus \langle +1 \rangle$ and $\Gamma_{12}\oplus \langle +1\rangle$ occur because there is a 2-handle cobordism from $-\Sigma(2,5,9)$ to $-\Sigma(3,4,11)$ with intersection form $\langle + 1 \rangle$. Indeed, the (3,4) torus knot is transformed into the $(2,5)$ torus knot by changing a positive crossing as in Figure \ref{fig:34}, as similarly done in the proof of Corollary \ref{cor:2,3}.\\

Finally, consider again the family $-\Sigma(2,2k-1,4k-3)$, obtained from $+1$ surgery on the family of $(2,k)$ torus knots. The initial cases $k=2$ and $k=3$ provided our main examples for Theorems \ref{thm:genus2} and \ref{thm:genus1}. The methods in this article alone seem unable to treat the general case. However, we know that the definite lattices given by
\begin{equation}
   \langle + 1 \rangle, \qquad \Gamma_{4(k-i)}\oplus \langle +1 \rangle^{i} \quad ( 0 \leqslant i \leqslant k)\label{eq:gammalist}
\end{equation}
and their sums with $\langle +1\rangle^n$ are bounded by $-\Sigma(2,2k-1,4k-3)$; the first is the surgery cobordism, and the rest follow from the fact that the $(2,k-i)$ torus knot is obtained from the $(2,k)$ torus knot by changing $i$ positive crossings. These are certainly not all the possible lattices: because $-\Sigma(2,5,9)$ bounds $\Gamma_8=E_8$, for $k\geqslant 3$ the 3-manifold $-\Sigma(2,2k-1,4k-3)$ bounds $\Gamma_8\oplus \langle +1\rangle^{k-3}$. Even if we ignore the issue of diagonal summands, the list is not complete. For example, it is shown in \cite[Prop.4.14]{golla-scaduto} that $-\Sigma(2,7,13)$ bounds the lattice $A_{15}$.

\section{Relations for a circle times a surface}\label{sec:surface}

In this section we discuss the relations that appear in the table of Section \ref{sec:latticeterms} and in the proof of Theorem \ref{thm:char4}. In particular, we prove Proposition \ref{prop:nilppartial}, and reduce the verification of the general relations $\alpha^g \equiv 0$ (mod 2) and $\beta^{\lceil g/2 \rceil} \equiv 0 $ (mod 4) to a concrete arithmetic problem.\\

We define $\mathbb{V}_g$ to be the instanton homology, with integer coefficients, of a circle times a surface $\Sigma$ of genus $g$ with a $U(2)$-bundle that has second Stiefel-Whitney class Poincar\'{e} dual to the circle factor. More precisely, $\mathbb{V}_g$ is the $\Z/4$-graded group of Mu\~{n}oz, which is the quotient of the $\Z/8$-graded group $\mathbb{V}'_g$ by an involution $\tau$; see the discussion in \cite[\S 10]{froyshov-inequality}. Each of these is endowed with a ring structure using the maps induced by pairs of pants cobordisms times $\Sigma$. There is a map
\begin{equation}
    \Psi: \text{Sym}^\ast\left( H_0(\Sigma;\Z)\oplus H_2(\Sigma;\Z) \right) \otimes \Lambda^\ast\left( H_1(\Sigma;\Z) \right) \longrightarrow \mathbb{V}_g\label{eq:relsigma}
\end{equation}
which defines relative Donaldson invariants for the 4-manifold $\Sigma\times D^2$ with suitable bundle data. Let $x\in H_0(\Sigma;\Z)$ be the point class and $\{\gamma_i\}_{i=1}^{2g}$ be a symplectic basis of $H_1(\Sigma;\Z)$ such that $\gamma_i\cdot \gamma_{i+g}=1$. The mapping class group of $\Sigma$ acts on $\mathbb{V}_g$, and the three elements
\begin{equation}
    \alpha = 2\Psi([\Sigma]),\qquad \beta=-4\Psi(x), \qquad \gamma=-\sum_{i=1}^g \Psi(\gamma_i\gamma_{i+g})\label{eq:gens}
\end{equation}
generate the invariant part over the rationals; Mu\~{n}oz gives a presentation which is recursive in the genus \cite[\S 4]{munoz}. Our definition of $\gamma$ is one half of that from loc. cit.; see Section \ref{sec:muclasses} for this justification. The ring $\mathbb{V}'_g$ has similarly defined elements, which we denote by $\alpha',\beta',\gamma'$, of respective $\Z/8$-gradings 2, 4, 6. The involution $\tau$ acting on $\mathbb{V}_g'$ is a module homomorphism and shifts gradings by 4 (mod 8); the equivalence classes of $\alpha',\beta',\gamma'$ in $\mathbb{V}_g$ are of course $\alpha,\beta,\gamma$.

\vspace{.25cm}

\begin{lemma}\label{lemma:rels}
    Suppose a polynomial $r(\alpha,\beta,\gamma)$ is a relation in $\mathbb{V}_g$. If the corresponding polynomial $r(\alpha',\beta',\gamma')$ in $\mathbb{V}_g'$ is of homogeneous $\Z/8$-grading, then it is a relation in $\mathbb{V}_g'$.
\end{lemma}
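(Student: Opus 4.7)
The plan is to exploit two structural facts built into the setup: the projection $\pi : \mathbb{V}_g' \to \mathbb{V}_g$ is a ring homomorphism sending $\alpha' \mapsto \alpha$, $\beta' \mapsto \beta$, $\gamma' \mapsto \gamma$ with kernel equal to $\operatorname{Im}(1 - \tau)$, and $\tau$ is an involution that shifts $\Z/8$-grading by $4$. Applying $\pi$ to $r(\alpha', \beta', \gamma') \in \mathbb{V}_g'$ yields $r(\alpha, \beta, \gamma) = 0$, so there exists $y \in \mathbb{V}_g'$ with
\[
    r(\alpha', \beta', \gamma') \;=\; y - \tau(y).
\]
The goal is to show that the $\Z/8$-homogeneity of the left-hand side forces this expression to vanish.

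The key step is a short piece of graded bookkeeping. Decompose $y$ into its $\Z/8$-homogeneous components $y = \sum_{i=0}^{7} y_i$. Because $\tau$ raises $\Z/8$-grading by $4$, the degree-$j$ part of $y - \tau(y)$ equals $y_j - \tau(y_{j-4})$, with subscripts read modulo $8$. Let $d$ be the $\Z/8$-grading of $r(\alpha', \beta', \gamma')$. Since $r(\alpha', \beta', \gamma')$ has no component in degree $d+4$, comparing degree-$(d+4)$ terms on both sides gives
\[
    y_{d+4} \;=\; \tau(y_d).
\]
Substituting this into the degree-$d$ identity $y_d - \tau(y_{d+4}) = r(\alpha', \beta', \gamma')$ and invoking $\tau^2 = \operatorname{id}$ yields
\[
    r(\alpha', \beta', \gamma') \;=\; y_d - \tau(\tau(y_d)) \;=\; 0,
\]
as desired.

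I do not anticipate a serious obstacle: once the relation $\ker(\pi) = \operatorname{Im}(1 - \tau)$ and the grading-shift property of $\tau$ are accepted (both recorded in the preceding paragraph of the paper), the argument is essentially linear algebra on the pair of degrees $\{d, d+4\}$ that $\tau$ interchanges. The one point that must be kept in mind is that $\Z/8$-homogeneity is strictly stronger than $\Z/4$-homogeneity, since $\alpha'$ and $\gamma'$ lie in different $\Z/8$-degrees despite projecting to elements of the same $\Z/4$-degree; the conclusion of the lemma genuinely uses this finer homogeneity assumption.
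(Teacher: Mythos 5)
Your proposal is correct and follows essentially the same route as the paper: write $r(\alpha',\beta',\gamma')=(1-\tau)\phi$ using that the kernel of the quotient map is $\operatorname{Im}(1-\tau)$, then use the degree-$4$ shift of $\tau$ together with $\Z/8$-homogeneity to force the expression to vanish. Your version spells out the grading bookkeeping on the pair of degrees $\{d,d+4\}$ that the paper leaves implicit, and correctly concludes $r(\alpha',\beta',\gamma')=0$ rather than the stronger (and unnecessary) claim $\phi=0$.
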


\vspace{.15cm}

\begin{proof}
    If the quotient polynomial $r(\alpha,\beta,\gamma)$ is a relation, $r(\alpha',\beta',\gamma')=(1-\tau)\phi$ for some $\phi$ within $\mathbb{V}_g'$. Since $\tau$ is of degree 4, and $r(\alpha',\beta',\gamma')$ has homogeneous $\Z/8$-grading, $\phi=0\in \mathbb{V}_g'$.
\end{proof}

\vspace{.25cm}

When proving our inequalities, we will need to use relations in $\mathbb{V}_g'$. Lemma \ref{lemma:rels} says that so long as they are homogeneously $\Z/8$-graded in $\mathbb{V}_g'$ it suffices to show the corresponding relations in $\mathbb{V}_g$. This is the case for the relations we consider, and henceforth we restrict our attention to $\mathbb{V}_g$.\\

Let $N_g$ be the moduli space of projectively flat connections on a $U(2)$-bundle with fixed odd determinant over a surface of genus $g$. Mu\~{n}oz's work shows that $\mathbb{V}_g\otimes\C$ is isomorphic to $H^\ast(N_g;\C)$, and in fact the ring structure of the former is a deformation of the latter. More precisely, the product in $\mathbb{V}_g$ is equal to the cup product in $H^\ast(N_g;\C)$ up to lower order terms of equal mod 4 gradings. Furthermore, the isomorphism is well-defined over the rationals, so we may replace $\C$ by $\Q$. There is also a Morse-Bott spectral sequence, due to Fukaya \cite{fukaya}, starting at $H^\ast(N_g;\Z)$ and converging to $\mathbb{V}_g$. Since $H^\ast(N_g;\Z)$ is torsion-free, as proven by Atiyah and Bott \cite[Thm. 9.10]{ab}, and the spectral sequence collapses over $\Q$, it must collapse for all coefficient fields. Thus we obtain

\vspace{.25cm}

\begin{prop}
    $\mathbb{V}_g$ is torsion-free.\label{prop:torsion}
\end{prop}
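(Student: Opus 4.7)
The plan is to exploit the three ingredients already assembled in this section: Mu\~{n}oz's rational isomorphism $\mathbb{V}_g\otimes\Q\cong H^*(N_g;\Q)$, Atiyah and Bott's theorem that $H^*(N_g;\Z)$ is torsion-free, and Fukaya's Morse-Bott spectral sequence from $H^*(N_g)$ to $\mathbb{V}_g$. My strategy is a rank-comparison argument, carried out one prime at a time.

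First I would set $r:=\text{rank}_{\Z}H^*(N_g;\Z)$. By Atiyah-Bott and the universal coefficient theorem, $\dim_{k}H^*(N_g;k)=r$ for every field $k$. Mu\~{n}oz's isomorphism then gives $\dim_{\Q}(\mathbb{V}_g\otimes\Q)=r$, so the free rank of the finitely generated abelian group $\mathbb{V}_g$ is exactly $r$. It also shows that Fukaya's spectral sequence collapses at $E_2$ over $\Q$, since the total dimensions of the $E_2$- and $E_\infty$-pages agree.

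Next, fix a prime $p$ and run the spectral sequence with $\Z/p$ coefficients. Because $H^*(N_g;\Z)$ is torsion-free, the universal coefficient theorem gives an $E_2$-page $H^*(N_g;\Z/p)$ of total dimension $r$, and the spectral sequence converges to (the associated graded of) $\mathbb{V}_g\otimes\Z/p$. Successive pages only shrink, so $\dim_{\Z/p}(\mathbb{V}_g\otimes\Z/p)\leqslant r$. On the other hand, applying the universal coefficient theorem to $\mathbb{V}_g$ itself gives $\dim_{\Z/p}(\mathbb{V}_g\otimes\Z/p)\geqslant r$, with equality if and only if $\mathbb{V}_g$ has no $p$-torsion. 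The two inequalities force equality, and as $p$ was arbitrary, $\mathbb{V}_g$ is torsion-free.

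The only real obstacle is the technical point that Fukaya's spectral sequence must be set up at the chain level in a way that is compatible with change of coefficients, so that its mod $p$ incarnation really has $E_2$-page $H^*(N_g;\Z/p)$ and converges to (the associated graded of) $\mathbb{V}_g\otimes\Z/p$. Granting this standard compatibility, the dimension count above is immediate.
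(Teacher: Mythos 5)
Your argument is exactly the one the paper uses, just written out in more detail: the paper likewise combines the torsion-freeness of $H^\ast(N_g;\Z)$ (Atiyah--Bott), the rational collapse of Fukaya's spectral sequence forced by Mu\~noz's computation, and the resulting rank count over each $\Z/p$ to conclude collapse over every field and hence absence of torsion. The proposal is correct and takes essentially the same route.
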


\vspace{.25cm}

However, the ring structure of $\mathbb{V}_g$ is substantially more complicated than that of $\mathbb{V}_g\otimes\Q$, since $\alpha,\beta,\gamma$ do no generate the invariant part of $\mathbb{V}_g$. This is already true for $H^\ast(N_g;\Z)$, which requires more generators than does $H^\ast(N_g;\Q)$, see \cite[\S 9]{ab}. Nonetheless, the relations of interest can be extracted from Mu\~{n}oz's presentation, which we now recall: set $\zeta_0=1$, and recursively define
\begin{equation*}
    \zeta_{r+1}=\alpha\zeta_r+r^2(\beta+(-1)^r8)\zeta_{r-1}+4r(r-1)\gamma\zeta_{r-2}.
\end{equation*}
Each $\zeta_r=\zeta_r(\alpha,\beta,\gamma)$ is a polynomial in the variables $\alpha,\beta,\gamma$ with integer coefficients. Then the ideal $(\zeta_g,\zeta_{g+1},\zeta_{g+2})$ is a complete set of relations for the part of $\mathbb{V}_g\otimes\Q$ invariant under the mapping class group action, see \cite[Thm.16, Prop.20]{munoz}.

\vspace{.25cm}

\begin{lemma}\label{lemma:mod8}
    $\beta\equiv \alpha^2$ {\emph{(mod 8)}}.
\end{lemma}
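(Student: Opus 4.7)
The strategy is to rewrite the claim as a mod-$2$ identity among $\mu$-classes and verify it separately in low and high genus.

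Substituting $\alpha = 2\Psi([\Sigma])$ and $\beta = -4\Psi(x)$ from \eqref{eq:gens} gives
\[
 \beta - \alpha^{2} \;=\; -4\bigl(\Psi(x) + \Psi([\Sigma])^{2}\bigr) \quad\text{in } \mathbb{V}_{g},
\]
so by torsion-freeness of $\mathbb{V}_{g}$ (Proposition~\ref{prop:torsion}) the claim is equivalent to
\[
 \Psi(x) \;\equiv\; \Psi([\Sigma])^{2} \pmod 2 \quad\text{in } \mathbb{V}_{g}.
\]
For $g \leq 2$ this is immediate from Mu\~noz's presentation: since $\zeta_{2} = \alpha^{2} + \beta - 8$ lies in the relation ideal, $\alpha^{2} + \beta = 8$, and because $2\alpha^{2} = 8\Psi([\Sigma])^{2}$ is divisible by $8$ we conclude $\beta \equiv -\alpha^{2} \equiv \alpha^{2} \pmod 8$.

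For $g \geq 3$, $\zeta_{2}$ is no longer a relation in $\mathbb{V}_{g}$ by degree considerations relative to the generators $\zeta_{g},\zeta_{g+1},\zeta_{g+2}$, so a characteristic-class argument is needed. The plan is to interpret $\Psi(x)$ and $\Psi([\Sigma])$ as slant products (with $x$ and $[\Sigma]$) of an integer class representing $\tfrac14 p_{1}(\mathbb{P})$ for the universal $SO(3)$-bundle $\mathbb{P}$ over $\mathcal{B}^{\ast} \times \Sigma$, and to apply the classical mod-$2$ identity $p_{1}(E) \equiv w_{2}(E)^{2} \pmod 2$ to $\mathbb{P}$. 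A K\"unneth decomposition of $w_{2}(\mathbb{P})$ together with the vanishing of $\mathrm{Sq}^{1}$ on $H^{1}(\Sigma; \mathbb{Z}/2)$ (because $\Sigma$ is orientable) should match $p_{1}(\mathbb{P})/x$ mod $2$ with $(w_{2}(\mathbb{P})/[\Sigma])^{2}$, yielding $\Psi(x) \equiv \Psi([\Sigma])^{2} \pmod 2$ in $H^{\ast}(N_{g}; \mathbb{Z}/2)$. Lifting to $\mathbb{V}_{g} \otimes \mathbb{Z}/2$ is automatic by collapse of the Fukaya spectral sequence, which follows from Proposition~\ref{prop:torsion}.

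The main obstacle will be the normalisation in the high-genus step: because $w_{2}(\mathbb{P})$ is generally non-zero, $\tfrac14 p_{1}(\mathbb{P})$ is not integral on the nose, so one must either restrict attention to slant products that are automatically $4$-divisible, or invoke the stronger integer Wu formula $p_{1} \equiv \mathcal{P}(w_{2}) \pmod 4$ involving the Pontryagin square and extract its mod-$2$ consequence. Either way, the bookkeeping of K\"unneth terms and their interaction with the mapping-class-group action on $\mathbb{V}_{g}$ requires care.
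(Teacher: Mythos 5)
Your reduction of the claim to $\Psi(x)\equiv\Psi([\Sigma])^2\pmod 2$ (using torsion-freeness) and your treatment of $g\leqslant 2$ via the relation $\zeta_2=\alpha^2+\beta-8$ are both fine. But the case $g\geqslant 3$ is only a plan, and its central step fails for a quantitative reason you partially acknowledge but do not resolve. Since $\alpha=2\Psi([\Sigma])$ and $\beta=-4\Psi(x)$, both $\alpha^2$ and $\beta$ are divisible by $4$, so the entire content of the lemma is a mod $8$ statement about $p_1(\mathbb{E})$. The Wu-type identity $p_1\equiv w_2^2\pmod 2$ only controls $p_1$ mod $2$, and the Pontryagin-square refinement only controls it mod $4$; neither can produce the mod $8$ congruence you need. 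The paper instead gets the mod $8$ information in $H^4(N_g;\Z)$ from an integrality statement of index-theoretic origin: the class $\binom{g-1}{2}\alpha^2+(2g-1)\tfrac{\alpha^2-\beta}{8}$ is integral because it is the second Chern class of the push-forward of a universal bundle (cited from \cite{ss}); multiplying by $8$ and by a mod $8$ inverse of $2g-1$ gives $\beta\equiv\alpha^2\pmod 8$ in $H^4(N_g;\Z)$. No characteristic-class identity of the type you propose will substitute for this.

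There is a second gap: your claim that ``lifting to $\mathbb{V}_g\otimes\Z/2$ is automatic by collapse of the Fukaya spectral sequence.'' Collapse gives an additive identification, but the ring structure of $\mathbb{V}_g$ is only a \emph{deformation} of the cup product on $H^\ast(N_g;\Z)$, so a multiplicative relation in $H^4(N_g;\Z)$ transfers to $\mathbb{V}_g$ only up to lower-order terms. Because the deformation respects the mod $4$ grading, the correction is a constant $c$, i.e.\ one gets $\alpha^2-\beta+c\equiv 0\pmod 8$ in $\mathbb{V}_g$; the paper then pins down $c\equiv 0\pmod 8$ by applying the handle-contraction map $r_{g-1}:\mathbb{V}_g\to\mathbb{V}_1$ and using $\alpha=0$, $\beta=8$ in $\mathbb{V}_1$. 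Your argument needs an analogous step to control the deformation term, and as written it has none.
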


\vspace{.15cm}

\begin{proof}
   The corresponding relation holds in $H^4(N_g;\Z)$. Indeed, the degree 4 element
   \[
         {g-1 \choose 2}\alpha^2 + (2g-1)\frac{\alpha^2-\beta}{8}
   \]
   is integral, see \cite[Eq.(7), Prop.2.4]{ss}; it is the second Chern class of the push-forward of a universal bundle. Multiplying both sides by $8$, and a mod 8 inverse for $(2g-1)$, yields $\beta \equiv \alpha^2$ (mod 8) in the ring $H^4(N_g;\Z)$. Since the product in $\mathbb{V}_g$ is a deformation of the product in $H^\ast(N_g;\Z)$ respecting mod 4 gradings, within $\mathbb{V}_g$ we have $\alpha^2-\beta+c\equiv 0$ (mod 8), where $c$ is some constant. There is a map $r_k:\mathbb{V}_g\to\mathbb{V}_{g-k}$ induced by a cobordism which contracts $k$ handles, cf. \cite[Lemma 9]{munoz}. For $g\geqslant 1$, we have $0\equiv r_{g-1}(\alpha^2-\beta+c)\equiv c$ (mod 8) since in $\mathbb{V}_1$ the relations $\alpha=0$ and $\beta=8$ follow from $\zeta_1$ and $\zeta_2$. Thus $c\equiv 0$ (mod 8) and the relation follows.
\end{proof}

\vspace{.25cm}

This lemma allows us to write $\beta=\alpha^2+8\varepsilon$ for some element $\varepsilon\in\mathbb{V}_g$. Define the double factorial $n!!=n(n-2)(n-4)\cdots 1$ for $n>0$ odd. We propose the following.

\vspace{.25cm}

\begin{conjecture}\label{conj:alpha}
    $(2g-3)!!\zeta_g(\alpha,\alpha^2+8\varepsilon,\gamma)/g!$ is a polynomial in $\alpha,\varepsilon, \gamma$ with integer coefficients. Furthermore, the reduction of this polynomial mod 4 is congruent to $\pm\alpha^g$.
\end{conjecture}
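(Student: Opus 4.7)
The plan is to work directly with the substitution $\beta = \alpha^2 + 8\varepsilon$ (justified by Lemma \ref{lemma:mod8}) and the induced recursion
\[
P_{r+1} = \alpha P_r + r^2\alpha^2 P_{r-1} + 8r^2(\varepsilon+(-1)^r) P_{r-1} + 4r(r-1)\gamma P_{r-2}
\]
for $P_r := \zeta_r(\alpha, \alpha^2 + 8\varepsilon, \gamma) \in \Z[\alpha,\varepsilon,\gamma]$, with $P_0 = 1$ and $P_1 = \alpha$. Unrolling this recursion, $P_g$ becomes a sum over ordered compositions $g = s_1 + \cdots + s_m$ with $s_j \in \{1,2,3\}$: setting $k_j = s_1 + \cdots + s_j$, each size-$1$ step contributes a factor $\alpha$, each size-$2$ step contributes $(k_j-1)^2(\alpha^2 + 8\varepsilon + (-1)^{k_j-1}\cdot 8)$, and each size-$3$ step contributes $4(k_j-1)(k_j-2)\gamma$. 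This explicit formula is the workhorse of the argument.

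Using the classical identity $(2g-3)!!/g! = C_{g-1}/2^{g-1}$, where $C_n$ is the $n$th Catalan number, together with the computation $v_2(C_{g-1}) = s_2(g) - 1$, the conjecture is equivalent to the pair of statements: every coefficient of $P_g$ has $2$-adic valuation at least $d_g := g - s_2(g) = v_2(g!)$ (integrality), and every coefficient except that of $\alpha^g$ has $2$-adic valuation at least $d_g + 2$ (the mod-$4$ vanishing). Notably, there is no odd-primary condition to check, since the $C_{g-1}$ factor is already integral.

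The leading coefficient $[\alpha^g]P_g$ equals $\sum_c \prod_{s_j = 2}(k_j - 1)^2$, the sum running over compositions of $g$ into $1$'s and $2$'s. This combinatorial sum satisfies the recursion $T_{g+1} = T_g + g^2 T_{g-1}$ with $T_0 = T_1 = 1$, which is solved by $T_g = g!$. Hence $[\alpha^g]\tilde\zeta_g = (2g-3)!!$, and a direct count of the odd factors of $(2g-3)!!$ that are $\equiv 3 \pmod 4$ yields $(2g-3)!! \equiv (-1)^{\lfloor (g-1)/2\rfloor} \pmod 4$, which identifies the sign asserted by the conjecture.

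The main obstacle, as I see it, is verifying the uniform $2$-adic lower bound $v_2(c) \geq d_g + 2$ for every non-leading coefficient $c$ of $P_g$: this is precisely the elementary arithmetic problem that the paper defers. Every size-$3$ step in the composition expansion carries a factor of $4$, and every size-$2$ step that chooses $8\varepsilon$ or $\pm 8$ rather than $\alpha^2$ carries a factor of $8$, so each individual non-leading term already has some extra $2$-divisibility; however, the remaining factors $(k_j-1)^2$ and $4(k_j-1)(k_j-2)$ have erratic $2$-adic valuations, and the exact bound $d_g + 2$ appears only after summing over compositions. A natural attack is to group compositions by the combinatorial placement of their size-$2$ and size-$3$ steps, and then identify the resulting partial sums with closed expressions of central-factorial, Jacobi, or hypergeometric type, whose $2$-adic valuations can be controlled by Kummer-style carry arguments. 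Granting such identities, the mod-$4$ content of the conjecture follows at once: all non-leading coefficients of $\tilde\zeta_g$ vanish modulo $4$, while $[\alpha^g]\tilde\zeta_g = (2g-3)!! \equiv (-1)^{\lfloor(g-1)/2\rfloor} \pmod 4$.
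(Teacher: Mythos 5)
The statement you are addressing is stated in the paper only as a conjecture: the paper reduces it to an elementary $2$-adic divisibility problem about the coefficients of $\zeta_g(\alpha,\alpha^2+8\varepsilon,\gamma)$ and then verifies that problem by computer for $g\leqslant 128$ (which is how Proposition \ref{prop:nilppartial} is established); no general proof is given. Your reduction is correct and in fact refines the paper's. The composition expansion of the recursion is right; the identity $(2g-3)!!/g!=C_{g-1}/2^{g-1}$, together with $v_2(C_{g-1})=s_2(g)-1$ and $v_2(g!)=g-s_2(g)$ (where $s_2$ is the binary digit sum), correctly converts both the integrality claim and the mod $4$ claim into pure $2$-adic lower bounds on the coefficients of $P_g=\zeta_g(\alpha,\alpha^2+8\varepsilon,\gamma)$, with no odd-primary condition left over; and your computation $[\alpha^g]P_g=g!$ via $T_{g+1}=T_g+g^2T_{g-1}$ is correct, so the normalized leading coefficient is exactly $(2g-3)!!$, which is odd and $\equiv(-1)^{\lfloor(g-1)/2\rfloor}\pmod 4$. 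This pins down the sign left ambiguous in the statement of Conjecture \ref{conj:alpha} and checks against the table of polynomials in Section \ref{sec:surface}.

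However, the proposal is not a proof. The decisive step --- that every non-leading coefficient $c$ of $P_g$ satisfies $v_2(c)\geqslant v_2(g!)+2$ --- is precisely the arithmetic problem the paper defers, and you do not solve it: you describe ``a natural attack'' via grouping compositions and identifying the partial sums with central-factorial or hypergeometric expressions, and then conclude ``granting such identities.'' No such identities are stated, let alone proved, and the difficulty is genuine: as you note yourself, the factors $(k_j-1)^2$ and $4(k_j-1)(k_j-2)$ have individually erratic $2$-adic valuations, and the required uniform bound emerges only after summation over all compositions, so a term-by-term estimate cannot suffice. Until that bound is established, what you have is a correct and somewhat sharpened reformulation of the conjecture (equivalent to the criterion stated at the end of Section \ref{sec:surface}), together with a complete treatment of the leading term and the sign --- but not a proof of the conjecture.
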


\vspace{.25cm}

The verification of this conjecture implies the relations $\alpha^g\equiv 0$ (mod 2) and $\beta^{\lceil g/2 \rceil} \equiv 0$ (mod 4) within $\mathbb{V}_g$. Indeed, the polynomial in the conjecture is a relation in $\mathbb{V}_g$, since according to Mu\~{n}oz it is a relation in $\mathbb{V}_g\otimes \Q$, and $\mathbb{V}_g$ is torsion-free. Its reduction modulo 4 implies the relation $\alpha^g \equiv 0$ (mod 4), which by Lemma \ref{lemma:mod8} implies the two desired relations.\\

In fact, for some of our applications in mind, less is required. For example, we consider what is required to prove the conjectural inequality obtained from Theorem \ref{thm:char4} by replacing the left hand side with $ \lceil g/2 \rceil$. Define $N_\beta^4(g)'$ to be the nilpotency degree of $\beta$ in the ring $(\overline{\mathbb{V}}_g/\text{Tor})\otimes \Z/4$. The argument given for Theorem \ref{thm:char4} in the next section is easily seen to work for $N^4_\beta(g)'$ in place of $N_\beta^4(g)$. Of course $N^4_\beta(g)'\leqslant N_\beta^4(g)$, although they are likely equal. The point of using this alternative definition is as follows. We may set $\gamma=0$ in the recursive equation to define $\zeta'_{r+1}=\zeta'_r + r^2(\beta+(-1)^r8)\zeta'_{r-1}$ with $\zeta'_0=1$. Then $\xi_r=\zeta'_r(\alpha,\alpha^2+8\varepsilon)$ is a polynomial only in $\alpha$ and $\varepsilon$. As we only are concerned with relations mod 4, to prove that $N_\beta^4(g)'=\lceil g/2 \rceil$ it suffices to show that the rational coefficients of $\xi_r/r!$ all have reduced fraction forms with odd denominators, and have numerators divisible by 4, except for the coefficient in front of $\alpha^g$, which should be odd.

\vspace{.25cm}

\begin{proof}[Proof of Proposition \ref{prop:nilppartial}] The first few instances of Conjecture \ref{conj:alpha} are verified by hand, and we verify the rest of the cases $g\leqslant 128$ by computer. The first 8 polynomials defined in Conjecture \ref{conj:alpha} are given in Table 3 for illustration.
\end{proof}

\vspace{.25cm}

Finally, we remark that $\alpha^g \equiv 0$ (mod 2) is a relation in the ring $H^\ast(N_g;\Z/2)$ by work of the author with M. Stoffregen \cite{ss}. The above scheme suggests an alternative route to proving this relation. Indeed, the ring $H^\ast(N_g;\Q)$ has its own recursive presentation, which inspired the work of Mu\~{n}oz; in the recursive definition of $\zeta_r$ above, simply remove the term $(-1)^r8$. Then Conjecture \ref{conj:alpha} may be formulated with these modified polynomials. In particular, we suspect that the relation $\alpha^g\equiv 0$ (mod 4) also holds in $H^\ast(N_g;\Z/4)$.\\

In \cite{ss} it is also proven that $\alpha^{g-1}\not\equiv 0$ (mod 2) within $H^\ast(N_g;\Z/2)$. This implies $N_\alpha^2(g)\geqslant g$, which aligns with the first part of Lemma \ref{prop:gamma} and inequality \eqref{eq:mod2speculation}. Indeed, since $\mathbb{V}_g\otimes \Z/2$ is a deformation of the ring $H^\ast(N_g;\Z/2)$, the deformations being of lower degree but homogeneous mod 4, then because $\alpha^{g-1}$ is nonzero in the latter, it must also be so in the former.

\begin{figure}[t]
\begin{center}
Table 3\\
{\small{
{\renewcommand{\arraystretch}{1.5}
\begin{tabular}{p{.3cm}|p{12cm}}
$g$ & $(2g-3)!!\zeta_g(\alpha,\alpha^2+8\varepsilon,\gamma)/g!$\\
\hline
$ 1 $ & $ \alpha $\\
$ 2 $ & $ \alpha^{2} + 4 \varepsilon - 4 $\\
$ 3 $ & $ 3 \alpha^{3} + 20 \alpha \varepsilon + 12 \alpha + 4 \gamma $\\
$ 4 $ & $ 15 \alpha^{4} + 160 \alpha^{2} \varepsilon - 120 \alpha^{2} + 20 \alpha \gamma + 360 \varepsilon^{2} - 720 \varepsilon + 360 $\\
$ 5 $ & $ 105 \alpha^{5} + 1456 \alpha^{3} \varepsilon + 840 \alpha^{3} + 224 \alpha^{2} \gamma + 4984 \alpha \varepsilon^{2} + 6160 \alpha \varepsilon + 1232 \gamma \varepsilon + 3192 \alpha + 560 \gamma $\\
$ 6 $ & $ 945 \alpha^{6} + 16884 \alpha^{4} \varepsilon - 11340 \alpha^{4} + 2016 \alpha^{3} \gamma + 93576 \alpha^{2} \varepsilon^{2} - 146160 \alpha^{2} \varepsilon + 14448 \alpha \gamma \varepsilon + 151200 \varepsilon^{3} + 74088 \alpha^{2} - 5040 \alpha \gamma + 840 \gamma^{2} - 453600 \varepsilon^{2} + 453600 \varepsilon - 151200 $\\
$ 7 $ & $ 10395 \alpha^{7} + 221364 \alpha^{5} \varepsilon + 124740 \alpha^{5} + 28116 \alpha^{4} \gamma + 1558392 \alpha^{3} \varepsilon^{2} + 1851696 \alpha^{3} \varepsilon + 342672 \alpha^{2} \gamma \varepsilon + 3621024 \alpha \varepsilon^{3} + 957528 \alpha^{3} + 144144 \alpha^{2} \gamma + 9240 \alpha \gamma^{2} + 6852384 \alpha \varepsilon^{2} + 978912 \gamma \varepsilon^{2} + 7061472 \alpha \varepsilon + 931392 \gamma \varepsilon + 1929312 \alpha + 522720 \gamma $\\
$ 8 $ & $ 135135 \alpha^{8} + 3418272 \alpha^{6} \varepsilon - 2162160 \alpha^{6} + 365508 \alpha^{5} \gamma + 31141968 \alpha^{4} \varepsilon^{2} - 43531488 \alpha^{4} \varepsilon + 5319600 \alpha^{3} \gamma \varepsilon + 118472640 \alpha^{2} \varepsilon^{3} + 22177584 \alpha^{4} - 1873872 \alpha^{3} \gamma + 264264 \alpha^{2} \gamma^{2} - 285597312 \alpha^{2} \varepsilon^{2} + 19260384 \alpha \gamma \varepsilon^{2} + 151351200 \varepsilon^{4} + 288699840 \alpha^{2} \varepsilon - 14030016 \alpha \gamma \varepsilon + 1633632 \gamma^{2} \varepsilon - 605404800 \varepsilon^{3} - 89945856 \alpha^{2} + 7948512 \alpha \gamma - 480480 \gamma^{2} + 908107200 \varepsilon^{2} - 605404800 \varepsilon + 151351200 $
\end{tabular}
}
}}
\end{center}
\end{figure}

\section{Adapting Fr\o yshov's argument}\label{sec:proofs}

We now proceed to the proofs of Theorem \ref{thm:char4} and Proposition \ref{prop:mod8}. These are adaptations of Fr\o yshov's argument as given in \cite{froyshov-inequality}, which we closely follow and modify accordingly to our choices of coefficient rings. For most of the technical details we refer to loc. cit. In the final subsection we discuss some other adaptations.

\subsection{Proofs of Theorem \ref{thm:char4} and Proposition \ref{prop:mod8}}\label{subsec:proofs}

Let $X$ be a smooth, closed, oriented 4-manifold. For now we also assume $b_1(X)=0$. Suppose $b_2^+(X)=n\geqslant 1$ and let $\Sigma_1,\ldots,\Sigma_n$ be pairwise disjoint connected and oriented embedded surfaces with $\Sigma_i$ of genus $g_i$ such that $\Sigma_i\cdot \Sigma_i = 1$. We will eventually specialize to the case $n=1$. Let $W$ be the result of replacing a tubular neighborhood $U_i$ of $\Sigma_i\subset X$ with $U_i\#\cpbar$ for each $i$. Upon orienting the exceptional sphere $S_i$ in the corresponding copy of $\overline{\cp}$, we form two internal connected sums $\Sigma_i^\pm$ between $\Sigma_i$ and $S_i$, one preserving the orientation of $S_i$, the other reversing. Now define a smooth $n$-dimensional family of metrics $g(t)$ where $t=(t_1,\ldots,t_n)\in \R^n$ on the closed 4-manifold $W$, which as $t_i\to \pm \infty$ stretches along a link of $\Sigma_i^\pm$. Since $\Sigma_i^\pm\cdot \Sigma_i^\pm=0$, each such link may be identified with $S^1\times\Sigma_i^\pm$.\\

Let $s_i=\text{PD}[S_i]$, and let $E_k\to W$ be the $U(2)$-bundle with $c_1(E_k)=w+\sum s_i$ and $c_2(E_k)=k$. Write $\mathcal{L}\subset H^2(X;\Z)/\text{Tor}$ for the lattice of vectors vanishing on the $[\Sigma_i]$. We choose $w$ so that modulo torsion it is an element in $\mathcal{L}$ which is extremal. Denote by $M_{k,t}$ the moduli space of projectively $g(t)$-anti-self-dual connections on $E_k$, and let $\mathscr{M}_k$ denote the disjoint union of $M_{k,t}$ over $t\in \R^n$. After perturbing, the irreducible stratum $\mathscr{M}_k^\ast\subset \mathscr{M}_k$ is a smooth and possibly non-compact manifold of dimension $8c_2-2c_1^2-3(1-b_1+b_2^+)+n$. Thus
\begin{equation}
    \dim \mathscr{M}_k^\ast \; = \; 8k + 2|w^2| - 3.\label{eq:dim}
\end{equation}
If $k<0$, then $\mathscr{M}_k$ has no reducibles, while $\mathscr{M}_0$ contains a finite number. Denote by $\mathscr{M}'_k$ the result of removing small neighborhoods of each reducible; in particular, $\mathscr{M}'_k=\mathscr{M}_k$ if $k<0$. The assumption that $w$ is extremal rules out bubbling off of reducible solutions in these moduli spaces.\\

Recall from \cite[\S 5.1.2]{dk} that the $\mu$-map is given by
\begin{equation}
    \mu:H_i(W;\Q) \longrightarrow H^{4-i}(\mathscr{B}^\ast_E;\Q), \qquad \mu(a)= -\frac{1}{4}p_1(\mathbb{E})/a.
\end{equation}
Here $E$ is a $U(2)$-bundle over a 4-manifold $W$, and $\mathbb{E}$ is the universal adjoint $SO(3)$ bundle over $\mathscr{B}^\ast_E\times W$, where $\mathscr{B}^\ast_E$ is the configuration space of connections on $E$. The basepoint fibration associated to $x\in W$ is the restriction of $\mathbb{E}$ to a slice $\mathscr{B}_E^\ast\times\{x\}$. For later use, we also introduce notation for the second Stiefel--Whitney class: 
\begin{equation}
    \nu(x)\; = \; w_2(\mathbb{E})/1  \in H^{2}(\mathscr{B}^\ast_E;\Z/2).
\end{equation}
When defining (relative) Donaldson invariants on 4-manifolds, one cuts down moduli spaces inside $\mathscr{B}^\ast_E$ using geometrically constructed divisors representing $\mu$-classes. Henceforth we write $x\in H_0(W;\Z)$ for the point class.\\

Returning to our above setup, to any $a_1,\ldots,a_k\in \{x\}\cup H_2(W;\Z)$ which descend to $\mathcal{L}^\ast$, subset $S\subset \mathscr{M}_k'$, and nonnegative integers $j_i \geqslant 0$ for $i=1,\ldots,k$, we use the shorthand $\mu(a_1)^{j_1}\cdots\mu(a_k)^{j_k} S$ for the intersection of $S$ with $j_i$ generic geometric representatives for $\mu(a_i)=-p_1(\mathbb{E})/4a_i$ supported away from where $g(t)$ varies, as $i$ runs over $1,\ldots,k$. Also, let $\mu(x)_{i}^jS$ denote the intersection of $S$ with a geometric representative depending on $t$ for $-p_1(\mathbb{E})/4\text{pt}$, where the basepoint is in the location of the stretched link of $\Sigma_i^\pm$ as $t_i\to\pm \infty$. For the constructions see \cite[\S 7]{froyshov-inequality}, where $\mu(x)_i$ is called $x_i$. 
The following lemma is for a general 4-manifold $W$ with $U(2)$ bundle $E$.

\vspace{0.25cm}

\begin{lemma}\label{lemma:2div}
    For $a\in H_2(W;\Z)$,  $2\mu(a)$ defines a class in $H^2(\mathscr{B}^\ast_E;\Z)$. If further $\langle w_2(E), a\rangle \equiv 0$ \text{\emph{(mod 2)}}, then $\mu(a)$ defines a class in $H^2(\mathscr{B}^\ast_E;\Z)$.
\end{lemma}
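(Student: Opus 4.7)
The plan is to unfold $\mu(a) = -\tfrac{1}{4}p_1(\mathbb{E})/a$ in terms of the Chern classes of the universal $U(2)$ bundle $\mathcal{E}$ on $\mathscr{B}^\ast_E \times W$. The standard identity $p_1(\mathfrak{su}(V)) = c_1(V)^2 - 4c_2(V)$ applied to $\mathcal{E}$ gives
\[
    \mu(a) \;=\; c_2(\mathcal{E})/a \;-\; \tfrac{1}{4}\,c_1(\mathcal{E})^2/a.
\]
The first slant product is manifestly integral, so the entire question reduces to showing $c_1(\mathcal{E})^2/a \in 4\, H^2(\mathscr{B}^\ast_E;\Z)$.

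I would then decompose $c_1(\mathcal{E})$ via the K\"unneth formula. In the setting of this subsection, where $b_1(W)=0$, this reads $c_1(\mathcal{E}) = \alpha \otimes 1 + 1 \otimes c_1(E)$ for some $\alpha \in H^2(\mathscr{B}^\ast_E;\Z)$; the precise $\alpha$ depends on a choice of line-bundle twist of the universal bundle. Squaring and slanting with $a \in H_2(W;\Z)$ annihilates the $\alpha^2 \otimes 1$ and $1 \otimes c_1(E)^2$ summands for degree reasons, leaving $c_1(\mathcal{E})^2/a = 2\,\langle c_1(E), a\rangle\,\alpha$. Reading $w$ as the integral class $c_1(E)$ and invoking the hypothesis $\langle w, a\rangle \equiv 0 \pmod{2}$, this expression lies in $4\,H^2(\mathscr{B}^\ast_E;\Z)$, so
\[
    \mu(a) \;=\; c_2(\mathcal{E})/a \;-\; \tfrac{1}{2}\,\langle w, a\rangle\,\alpha
\]
is an integral class in $H^2(\mathscr{B}^\ast_E;\Z)$, as required.

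The one delicate point is the existence of a global universal $U(2)$ bundle $\mathcal{E}$ over $\mathscr{B}^\ast_E \times W$, since a priori only the adjoint $SO(3)$ bundle $\mathbb{E}$ is canonically defined on the configuration space. This is standard in the $U(2)$ framework used in the paper, and any two choices of $\mathcal{E}$ differ by a line bundle pulled back from $\mathscr{B}^\ast_E$, which shifts $\alpha$ by an integer class and hence does not affect whether the rational class $\mu(a)$ is integral. If one wished to avoid a universal $\mathcal{E}$ altogether, the identity $p_1(\mathbb{E}) \equiv \mathcal{P}(w_2(\mathbb{E})) \pmod{4}$ combined with the analogous K\"unneth decomposition of $w_2(\mathbb{E})$ produces exactly the same divisibility, again triggered by $\langle w, a\rangle \equiv 0 \pmod 2$.
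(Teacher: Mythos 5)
Your proposal is correct and follows essentially the same route as the paper, which (following Lemma 3 of \cite{amr}) also lifts $\mathbb{E}$ to a $U(2)$ bundle, writes $-p_1(\mathbb{E})=4c_2-c_1^2$, and kills the $c_1^2/a$ term via the K\"unneth decomposition of $c_1$ and the hypothesis $\langle w,a\rangle\equiv 0$ (mod 2). The only cosmetic difference is that the paper normalizes the lift so that the cross term vanishes identically, whereas you keep a general lift and observe that the cross term $2\langle w,a\rangle\,\alpha$ lies in $4\,H^2(\mathscr{B}^\ast_E;\Z)$.
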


\vspace{0.15cm}

\begin{proof}
This follows from \cite[Lemma 3]{amr}. The proof is short so we include it. If $\langle w_2(E) ,a\rangle\equiv 0$, we can lift $\mathbb{E}\to \mathscr{B}^\ast_E\times W$ to a $U(2)$ bundle $\mathbb{F}$ such that $\langle c_1(\mathbb{F}),a\rangle = 0$. Now use $-p_1(\mathbb{E})=4c_2(\mathbb{F})-c_1^2(\mathbb{F})$ and $c_1(\mathbb{F})=c_1(\mathbb{F}|_{\mathscr{B}^\ast})\times 1 + 1\times c_1(E)$ to compute $-p_1(\mathbb{E})/a=4c_2(\mathbb{F})/a$. In general, $2\mu(a)$ is integral, as $-p_1(\mathbb{E})/a=4c_2(\mathbb{F})/a-2c_1(\mathbb{F}|_{\mathscr{B}^\ast})\times c_1(E)/a$ is even.
\end{proof}

\vspace{0.25cm}

This lemma reduces to Corollary 5.2.7 of \cite{dk} when $w_2(E)\equiv 0$ (mod 2). When cutting down moduli spaces by $\mu(a)$, for $a$ as in the lemma, the geometric representatives we have in mind are those constructed as in loc. cit. using line bundles of coupled Dirac operators over the surface.

\vspace{0.25cm}

\begin{proof}[Proof of Theorem \ref{thm:char4}]
Assume the setup above. Let $a_1,\ldots,a_{m_0}, a'_1,\ldots,a'_{m_1}\in  H_2(W;\Z)$ be such that each class descends to $\mathcal{L}^\ast$, and $\langle w,a_i \rangle \equiv 0$ (mod 2) for each $a_i$. Further, setting $m=m_0+m_1$, assume $w^2\equiv m$ (mod 2). Lemma \ref{lemma:2div} says each $\mu(a_i)$ and $2\mu(a_i')$ are integral. Suppose as in the definition of $f_4(\mathcal{L})$ that $2^{-m_0}\eta(\mathcal{L},w,a,m)\not\equiv 0$ (mod 4), where $a=a_1\cdots a_{m_0}a'_1\cdots a_{m_1}'$. Suppose for contradiction that the following inequality holds:
\begin{equation}
   \sum_{i=1}^n N_\beta^4(g_i) \; < \; (|w^2|-m)/2.\label{eq:mainmech4}
\end{equation}
Set $n_i= N_\beta^4(g_i)$.  We define the following smooth, orientable 1-manifold with boundary and a finite number of non-compact ends:
\begin{equation}
    \hat{\mathscr{M}} \; := \; \left(4\mu(x)\right)^{(|w^2|-m)/2-1-\sum n_i}\prod_{k=1}^{m_0}\mu(a_k)\prod_{j=1}^{m_1}2\mu(a_j')\prod_{i=1}^n \left(4\mu(x)_i\right)^{n_i} \mathscr{M}'_0\label{eq:thespace4}
\end{equation}
Here it is important that we cut down by divisors associated to integral cohomology classes. The boundary points of $\hat{\mathscr{M}}$ arise from the deleted neighborhoods of reducibles in $\mathscr{M}_0$. Denote by $\mathcal{T}$ the torsion subgroup of $H^2(X;\Z)$. Then each pair $\{z,-z\}\subset \text{Min}(w+2\mathcal{L})$ corresponds to $2^{n}\cdot\#\mathcal{T}$ many reducibles. Indeed, as shown in \cite[Lemma 1]{froyshov-inequality}, the family of metrics may be chosen such that the reducibles in $\mathscr{M}_0$ correspond to splittings $E_0=L_1\oplus L_2$ into line bundles such that
\begin{equation}\label{eq:truereducibles}
	c_1(L_1\otimes L_2^{-1}) = \overline{z} - \sum_{i=1}^{n} \epsilon_i s_i
\end{equation}
where each $\epsilon_i=\pm 1$ and the projection of $\overline{z}$ to $H^2(W;\Z)/\mathcal{T}$ is some $z\in \text{Min}(w+2\mathcal{L})$. The symmetry of swapping $L_1$, $L_2$ in \eqref{eq:truereducibles} leads to the consideration of the pair $\{z,-z\}$, and for each such pair the freedom of torsion multiplies the number of reducibles by $\#\mathcal{T}$, while the possibilities for the signs $\epsilon_i$ multiply the number by $2^{n}$.\\

The neighborhood of each reducible in $\mathscr{M}_0$ is a cone on a complex projective space of dimension $d=2|w^2|-4$. To a reducible associated to a class $r$ as in \eqref{eq:truereducibles}, let $h$ be the degree two generator of the projective space $\mathbb{C}\mathbb{P}^d$ which is the link of this cone. Then by \cite[Prop. 5.1.21]{dk} we have
\begin{equation}
   \mu(x)|_{\mathbb{C}\mathbb{P}^d} \; = \; \pm \frac{1}{4}h^2, \qquad \mu(a_i)|_{\mathbb{C}\mathbb{P}^d} \; =  \pm\frac{1}{2}\langle r, a_i\rangle h \label{eq:links}
\end{equation}
where the second relation of course also holds for $a_i'$. Fr\o yshov shows that the moduli space can be oriented such that each of the $2^{n}\cdot \#\mathcal{T}$ reducibles in \eqref{eq:truereducibles} associated to $\{z,-z\}\subset \text{Min}(w+2\mathcal{L})$ have the same orientations for their cones. We compute
\begin{equation}
   \#\partial \hat{\mathscr{M}} = 2^{n-{m_0}}\cdot \#\mathcal{T}\cdot\, \eta(\mathcal{L},w,a,m), \label{eq:boundary}
\end{equation}
which is a rescaling of \cite[Proposition 5]{froyshov-inequality}.\\

Now we discuss the ends of the moduli space (\ref{eq:thespace4}). These arise as the metric family parameters $t_i$ go off to $\pm\infty$. Transversality ensures that at most one such parameter can stay unbounded for a given sequence of instantons in $\hat{\mathscr{M}}$. The part of $\hat{\mathscr{M}}$ with fixed $\pm t_i = \tau \gg 0$ is a finite number of points, which by gluing theory is a pair of instantons over $\R^2\times \Sigma_i^\pm$ and over $W\setminus \Sigma_i^\pm$. We may write 
\begin{equation}
   \# \hat{\mathscr{M}}_{\pm t_i = \tau} \; = \; \phi_i^\pm \cdot \psi_i^\pm\label{eq:ends}
\end{equation}
where $\phi_i^\pm\in \mathbb{V}'_{g_i}$ counts instantons over $\R^2\times\Sigma_i^\pm$, and $\psi_i^\pm\in (\mathbb{V}_{g_i}')^\ast$ over $W\setminus \Sigma_i^\pm$ in the family of metrics with $\pm t_i = \tau$ fixed. Here $\mathbb{V}_{g}'$ is the $\Z/8$-graded instanton cohomology of a circle times a surface as discussed in Section \ref{sec:surface}.\\

A priori, the elements $\phi_i^\pm$ and $\psi_i^\pm$ only define cochains in their Floer cochain complexes. The unperturbed Chern-Simons functional for the restricted bundle over $S^1\times\Sigma_i^\pm$ is Morse-Bott along its critical set, which is two copies of $N_{g}$ where $g=g_i$. According to Thaddeus \cite{thaddeus}, the manifold $N_g$ has a perfect Morse function. We perturb the Chern-Simons functional so that its critical set consists of two copies of the critical points of such a function. (See for example \cite[Proposition 6]{bd} and the surrounding discussion for this sort of perturbation.) The rank of the instanton Floer cochain complex coincides with that of $\mathbb{V}_g$, and so has zero differential. Thus $\phi_i^\pm$ and $\psi_i^\pm$ may also be viewed as Floer cohomology classes, as claimed in the previous paragraph. In this way we remove the restriction in \cite{froyshov-inequality} that all but one of the surfaces have genus 1.\\

Now we make a futher simplification which effectively removes a factor of $2$ appearing in \eqref{eq:boundary}. Fix $1\leqslant i\leqslant n$. Let $\rho_i$ be a diffeomorphism of $W$ which is reflection in the exceptional class in $\cpbar\setminus B^4\subset U_i\#\cpbar$ and is the identity elsewhere. We arrange that $\rho_i$ interchanges $\Sigma_i^+$ and $\Sigma_i^-$, and fixes all other $\Sigma_j^\pm$. We may choose the family of metrics such that for $|t_i|\geqslant \tau$, the map $\rho_i$ interchanges the metrics $g(t_1,\ldots,t_i,\ldots,t_n)$ and $g(t_1,\ldots,-t_i,\ldots,t_n)$. Consequently, $\rho_i$ interchanges $\mathscr{M}_{ +t_i \geqslant  \tau}$ and $\mathscr{M}_{ - t_i \geqslant  \tau}$ in an orientation-preserving fashion. To see this last point, we note that $\rho_i^\ast$ preserves the orientation of $H^+(W;\R)$, reverses the orientation of the metric family, and reverses the orientation rule for the moduli space with a fixed metric constructed in \cite{donaldson-orientation}:
\[
	(-1)^{\left(\frac{\rho_i^\ast(c_1(E_0))-c_1(E_0)}{2}\right)^2}= (-1)^{s_i^2}=-1.
\]
We may arrange that cutting down the moduli space by the divisors is also compatible with $\rho_i$, so that $\rho_i$ also interchanges $\phi_i^+\cdot \psi_i^+$ and  $\phi_i^-\cdot \psi_i^-$ in a sign-preserving way. The number of ends of $\hat{\mathscr{M}}$ is then counted to be the following:
\begin{equation}\label{eq:phipsiend}
	\sum_{i=1}^n \phi_i^-\cdot \psi^-_i + \phi_i^+\cdot \psi_i^+ = 2\sum_{i=1}^n  \phi_i^+\cdot \psi_i^+.
\end{equation}
The number of boundary points and the number of ends of the 1-manifold $\hat{\mathscr{M}}$ counted with signs must be zero, and so from \eqref{eq:boundary} and \eqref{eq:phipsiend} we obtain the relation
\[
	2^{n-{m_0}}\cdot\#\mathcal{T}\cdot\, \eta(\mathcal{L},w,a,m) +  2\sum_{i=1}^n  \phi_i^+\cdot \psi_i^+ = 0
\]
Now take $n=1$ and divide this relation by $2$. Then we have
\begin{equation}\label{eq:finalcounts}
	2^{-{m_0}}\cdot\#\mathcal{T}\cdot\, \eta(\mathcal{L},w,a,m) +  \phi_1^+\cdot \psi_1^+ = 0
\end{equation}
The class $\phi^+_1$ comes from $4\mu(x)_1^{n_1}$ in the expression (\ref{eq:thespace4}), and so $\phi_1^+ = (\beta')^{n_1}$, in the notation of Section \ref{sec:surface}. By the definition of $n_1=N_\beta^4(g_1)$ and Lemma \ref{lemma:rels}, the element $\phi_1^+$ is in the ideal of $\mathbb{V}_g'\otimes \Z/4$ generated by $\mu$-classes of loops. Here $g=g_1$. Similar to the argument of \cite[\S 10]{froyshov-inequality}, we conclude $ \phi_1^+\cdot \psi_1^+$ vanishes mod 4, essentially because (relative) Donaldson invariants involving $\mu$-classes of loops vanish for 4-manifolds with $b_1=0$; see Section \ref{sec:muclasses} for this justification. But the left term in \eqref{eq:finalcounts} is by assumption non-zero mod $4$, a contradiction.\\

We make two final remarks. First, although we worked with a homogeneous element $a=a_1\cdots a_{m_0}a'_1\cdots a_{m_1}'$ such that $\langle a_i,w\rangle \equiv 0 \pmod 2$, the argument easily extends to any linear combination of such elements. This allows the argument to go through for all the data included in the definition of $f_4(\mathcal{L})$. Second, the general case reduces to that of $b_1(X)=0$ by surgering loops as in \cite[Prop.2]{froyshov-inequality}.
\end{proof}

\vspace{0.25cm}

\begin{proof}[Proof of Proposition \ref{prop:mod8}]
The proof is almost the same, except that every instance of mod $4$ coefficients is replaced with mod $8$ coefficients. We are led to the inequality $N^8_\beta(g)\geqslant f_8(\mathcal{L})$, and $N^8_\beta(1)=1$ because $\beta$ is multiplication by $8$ on $\mathbb{V}_1$.
\end{proof}

\vspace{0.25cm}

As the proof of Theorem \ref{thm:char4} is not particular to $\mathbb{Z}/4$, we may also apply it to the case of $\mathbb{Z}/2$ coefficients. However, our computations suggest that $N^4_\beta(g)=N^2_\beta(g)$, in which case the resulting inequality is implied by Theorem \ref{thm:char4}.\\

On the other hand, in the setting of mod $2$ coefficients, we may replace the role of $\beta$ with $\alpha$, which is implemented by replacing every instance of the class $4\mu(x)$ with $\nu(x)$, including the metric-dependent divisors. However, in this case, the cut down moduli space is not a priori naturally oriented, and the division by $2$ in obtaining \eqref{eq:finalcounts} is problematic. Perhaps further insights or other methods can overcome this obstacle. Nonetheless, in Section \ref{sec:alt} we will exhibit instances where cutting down by the second Stiefel--Whitney class gives rise to useful inequalities in the setting of mod $2$ instanton homology for homology $3$-spheres.

\subsection{$\mu$-classes of loops}\label{sec:muclasses}
We now take a moment to make more precise which geometric representatives for $\mu$-classes of loops are to be used in the above constructions. We refer to the simplified situation described in \cite[Sec.11]{froyshov-inequality}. There, a Riemannian 4-manifold $X$ with tublular end $[0,\infty)\times Y$ is considered, equipped with a $U(2)$-bundle that restricts to some oriented surface non-trivially within the tubular end. Fix a loop $\lambda:S^1\to X$. Following constructions from \cite{km-embedded}, Fr\o yshov then associates to $\lambda$ three classes $\Phi,\Psi_+,\Psi_-\in I^\ast(P;\Z)$ in the instanton Floer cohomology of $P\to Y$, the restriction of the bundle over $X$ to $Y$. Roughly, $\Phi$ cuts down moduli by the locus of connection classes with holonomy $1\in SO(3)$, and $\Psi_\pm$ cuts down by holonomy $\pm 1 \in SU(2)$. These classes satisfy the relation $\Phi = \Psi_+ + \Psi_-$. It is observed in \cite[Sec.11]{froyshov-inequality} that $\Psi_+ = \Psi_- $ and $\Phi = 2\Psi_\pm$ modulo 2-torsion. However, in our constructions above, $I^\ast(P;\Z)$ arises as $\mathbb{V}_g'$, which is torsion-free. Thus $\Phi/2=\Psi_\pm$ is an unambiguously defined class over the integers, and is the one which we have in mind when cutting down by $\mu$-classes of loops over arbitrary coefficient rings.\\

According to \cite[\S 2(ii)]{km-embedded}, with rational coefficients $\Phi$ is equal to what is usually denoted $2\mu(\lambda)$. Thus $\Psi_\pm$ is an integral class that agrees with $\mu(\lambda)$, the latter, in general, a priori only defined over the rationals. The map $\Psi$ of (\ref{eq:relsigma}) on a 1-dimensional homology class $[\lambda]$ is now more precisely defined using $\Psi_\pm=\Psi_\pm(\lambda)$, from the 4-manifold $D^2\times \Sigma$ with appropriate bundle. The independence of the chosen representative $\lambda$ for the class follows from \cite[Prop.7]{froyshov-inequality}. We have now justified our claim, in Section \ref{sec:surface}, that the class $\gamma$, as we have normalized it, is integral.\\

We can now also be more precise about the definition of the ring $\overline{\mathbb{V}}_g$ from Sections \ref{sec:latticeterms} and \ref{sec:surface}: it is the quotient of $\mathbb{V}_g$ by the ideal generated by elements $\Psi_\pm(\lambda)=\Psi(\lambda)$, defined using the 4-manifold $D^2\times \Sigma$ with appropriate bundle, and allowing $\lambda$ to range over a symplectic basis of loops $\{\gamma_i\}$ for the surface $\Sigma$. In particular, this ideal contains $\gamma$.\\

Finally, we note that with these conventions the proof that $\phi_i^\pm$ vanishes mod $4$ in the proof of Theorem \ref{thm:char4} now adapts from the argument in \cite{froyshov-inequality}: by definition of $N^4_\alpha(g)$, we have $\phi_i^+ \equiv \sum \Psi_\pm(\lambda_i)\chi_i$ (mod 4) for some loops $\lambda_i$ in the 4-manifold at hand, and from \cite[Prop.7]{froyshov-inequality} the latter quantity vanishes. Indeed, in our proof it is assumed that $b_1(X)=0$ and thus $\lambda_i$ torsion; then $\Psi_\pm(\lambda_i)$ is torsion in $\mathbb{V}_g$, so must be zero.

\subsection{Other adaptations}\label{sec:oddchar}

Let us first compare the above arguments to that of Theorem \ref{thm:char0}. We return to the setup in the proof of Theorem \ref{thm:char4}, before specializing to $n=1$. Set $n_i=N_\beta^0(g_i)$. We then consider the $1$-dimensional part of the $\Q$ linear combination of oriented manifolds
\begin{equation}
    \mu(x)^{(|w^2|-m)/2-1-2\sum n_i} \prod_{j=1}^{m}\mu(a_j) \prod_{i=1}^n \left(\mu(x)_i^2-64\right)^{n_i} \sum_{k\leqslant 0}\mathscr{M}'_{k}\label{eq:thespace0}
\end{equation}
The number of boundary points, which only appear within $\mathscr{M}'_0$, is equal to a power of two times $\#\mathcal{T}\cdot\eta(\mathcal{L},w,a,m)$, while the number of ends is zero. Here $a=a_1\cdots a_m$ where each $a_j$ mod torsion is in $\mathcal{L}^\ast$. With these modifications, the argument is much the same as before. This handles the case of Theorem \ref{thm:char0} for a closed 4-manifold. The more general case follows from this with minor modifications as in \cite{froyshov-inequality}. Note that we have slightly improved Fr\o yshov's Theorem 2 from \cite{froyshov-inequality} by removing the restriction that all but one of the surfaces has genus $1$.\\

The above argument is also easily adapted to the case in which $\Q$ is replaced by $\Z/p$ for $p$ an odd integer. To begin, we define
\begin{equation*}
  N^p_\beta(g) \;:=\; \min\left\{ n\geqslant 1:\; (\beta^2-64)^n \equiv 0 \in \overline{\mathbb{V}}_g\otimes \Z/p \right\}
\end{equation*}
for $g\geqslant 1$ and $N_\beta^p(0)=0$. Upon setting $n_i=N_\beta^p(g_i)$, we may consider the 1-dimensional part of (\ref{eq:thespace0}) a formal $\Z/p$ linear combination of 1-manifolds; the powers of two in the definitions of the $\mu$-classes are invertible modulo $p$. The number of boundary points is again $\#\mathcal{T}\cdot\eta(\mathcal{L},w,a,m)$ up to a power of two, and the number of ends is zero mod $p$. Define $e_p(\mathcal{L})$ by modifying the condition in the definition of $e_0(\mathcal{L})$ that $\eta\neq 0$ to $\eta\not\equiv 0$ (mod p). Then under the hypotheses of Theorem \ref{thm:char0}, if $\#\mathcal{T}$ is relatively prime to the odd integer $p$, we obtain
    \begin{equation*}
        \sum_{i=1}^n N_\beta^p(g_i) \; \geqslant \; e_p(\mathcal{L}).
    \end{equation*}
Furthermore, if $p$ is prime, and the 4-manifold has instead a homology 3-sphere boundary $Y$, then the same inequality holds upon adding to the left side $h_p(Y)$, Fr\o yshov's instanton invariant defined over $\Z/p$. The modifications needed to deduce the case with a homology 3-sphere boundary from the closed 4-manifold case are completely analogous to those in \cite{froyshov-inequality}. However, $e_0(\mathcal{L})\geqslant e_p(\mathcal{L})$, and the following shows that we do not improve upon what is already known from Theorem \ref{thm:char0}.

\vspace{0.25cm}

\begin{prop}
    Let $p\in \Z$ be odd. Then $N_\beta^p(g)\geqslant \lceil g/2 \rceil$. Equality holds if $p$ is prime and $p>g$.
\end{prop}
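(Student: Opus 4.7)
The plan is to split the statement into the lower bound $N_\beta^p(g)\geqslant \lceil g/2 \rceil$, valid for every odd $p$, and the matching upper bound, valid when $p$ is prime with $p>g$. For the lower bound, the idea is to apply the inequality of the previous theorem to the 4-manifold $X(g+1)$ with embedded genus $g$ surface $\Sigma(g+1)$ constructed in the proof of Proposition \ref{prop:gamma}. The corresponding lattice $\mathcal{L}$ is $-\Gamma_{4g+4}$, and the torsion subgroup $\mathcal{T}$ is trivial, so the coprimality hypothesis on $\#\mathcal{T}$ is satisfied vacuously for any odd $p$. The theorem then reduces the problem to showing $e_p(\Gamma_{4g+4})\geqslant \lceil g/2 \rceil$ for every odd integer $p$.

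To compute $e_p(\Gamma_{4g+4})$, I would transcribe Fr\o yshov's proof of \cite[Prop.1]{froyshov-inequality}, which establishes $e_0(\Gamma_{4g+4})=\lceil g/2 \rceil$ by exhibiting an explicit triple $(w,m,a)$ with $|w^2|-m = 4\lceil g/2 \rceil$ and $\eta$ equal, up to sign, to a power of two. Powers of two are invertible modulo every odd integer, so the same triple witnesses $e_p(\Gamma_{4g+4})\geqslant \lceil g/2 \rceil$ once the condition $\eta\neq 0$ in the definition of $e_0$ is replaced by $\eta\not\equiv 0\pmod{p}$ in the definition of $e_p$.

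For the upper bound $N_\beta^p(g)\leqslant \lceil g/2 \rceil$ when $p$ is prime and $p>g$, I would appeal to Mu\~{n}oz's identity $(\beta^2-64)^{\lceil g/2 \rceil}=0$ in $\overline{\mathbb{V}}_g\otimes \Q$ from \cite[Prop.20]{munoz}. Because $\mathbb{V}_g$ is torsion-free by Proposition \ref{prop:torsion}, there is a smallest positive integer $N$ such that $N(\beta^2-64)^{\lceil g/2 \rceil}=0$ holds integrally in $\overline{\mathbb{V}}_g$. Writing this relation as an integer combination of $\zeta_g(\alpha,\beta,0)$, $\zeta_{g+1}(\alpha,\beta,0)$, $\zeta_{g+2}(\alpha,\beta,0)$, one can track the primes appearing as denominators from inverting the leading coefficients $r^2$ in the recursion defining $\zeta_r$. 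These primes are all at most $g$, so the relation descends to $\overline{\mathbb{V}}_g\otimes \Z/p$ for every prime $p>g$.

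The main obstacle is the denominator bookkeeping in the final step: verifying that no prime $p>g$ divides $N$. A careful route would be to extract from Mu\~{n}oz's argument an explicit formula expressing $(\beta^2-64)^{\lceil g/2 \rceil}$ as a combination of the generating relations with coefficients in $\Z[1/M]$ for some $M$ supported on primes at most $g$; as a consistency check, the formulas in Table 3 and Conjecture \ref{conj:alpha} suggest that the required arithmetic statement is of exactly this flavor. I expect, as with the mod 2 and mod 4 relations treated in Section \ref{sec:surface}, that the full verification will reduce to an elementary arithmetic claim about the polynomials $\zeta_r$ that can be confirmed directly.
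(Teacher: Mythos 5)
Your lower-bound argument coincides with the paper's: apply the odd-$p$ inequality to the pair $(X(g+1),\Sigma(g+1))$ from Proposition \ref{prop:gamma}, so that the statement reduces to $e_p(\Gamma_{4g+4})\geqslant \lceil g/2\rceil$, and then observe, as in \cite[Prop.1]{froyshov-inequality}, that for the extremal vector $w=(1,\ldots,1,0,\ldots,0)$ with $4\lceil g/2\rceil$ ones the quantity $\eta(\Gamma_{4g+4},w)=\pm\tfrac12\#\mathrm{Min}(w+2\Gamma_{4g+4})$ is a power of $2$, hence a unit modulo any odd $p$. That half is complete.

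The equality case is where you have a genuine gap. You correctly reduce it to showing that the minimal $N$ with $N(\beta^2-64)^{\lceil g/2\rceil}=0$ in $\overline{\mathbb{V}}_g$ has no prime factor exceeding $g$, but you leave this as an acknowledged ``obstacle'' and point to Conjecture \ref{conj:alpha} and Table 3 as the relevant flavor of arithmetic. That is the wrong comparison: the mod $p$ statement needs none of the (in general unverified) arithmetic of Section \ref{sec:surface}, and the paper closes the step in a few lines by working modulo $\gamma$ with Mu\~{n}oz's recursion. Namely, $\zeta_{g+1}=\alpha\zeta_g+g^2(\beta+(-1)^g8)\zeta_{g-1}+4g(g-1)\gamma\zeta_{g-2}$ gives $g^2(\beta+(-1)^g8)J_{g-1}\subset J_g+(\gamma)$ where $J_g=(\zeta_g,\zeta_{g+1},\zeta_{g+2})$, and descending inductively to $J_0\ni 1$ produces the \emph{integral} relation $\prod_{r=1}^g r^2(\beta+(-1)^r8)=\gamma\phi$ in $\mathbb{V}_g$. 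For a prime $p>g$ the factor $(g!)^2$ is a unit mod $p$, so dividing by it and, when $g$ is odd, multiplying by $\beta+8$ yields $(\beta^2-64)^{\lceil g/2\rceil}\equiv 0$ in $\overline{\mathbb{V}}_g\otimes\Z/p$. Without this (or an equivalent) explicit derivation your proposal establishes only the inequality, not the equality; with it, your framework goes through, since killing $(\beta^2-64)^{\lceil g/2\rceil}$ by an integer prime to $p$ does force the relation in $\overline{\mathbb{V}}_g\otimes\Z/p$.
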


\vspace{0.15cm}

\begin{proof}
The proof of the first statement is similar to that of Proposition \ref{prop:gamma}. It suffices to show that $e_p(\mathcal{L})\geqslant \lceil g/2 \rceil$ where $\mathcal{L}=\Gamma_{4g+4}$. We follow \cite[Prop.1]{froyshov-inequality}. Consider the extremal vector $w=(1,\ldots,1,0,\ldots,0)\in\mathcal{L}$ having $4\lceil g/2 \rceil$ entries equal to $1$. If $g$ is odd then $\text{Min}(w+2\mathcal{L})$ consists of $(\pm 1,\ldots,\pm 1, 0,\ldots,0)$ and $(0,\ldots,0,\pm 1,\ldots, \pm 1)$ where the number of signs is even; if $g$ is even it consists of $(\pm 1,\ldots,\pm 1, 0,\ldots,0)$ where again the number of signs is even. In either case, the signs in $\eta(\mathcal{L},w)$ are all equal, and $\eta(\mathcal{L},w)= \pm \frac{1}{2}\#\text{Min}(w+2\mathcal{L})$ is a power of 2, and in particular nonzero mod $p$. Since $w^2=4\lceil g/2\rceil$, we conclude that $e_p(\mathcal{L})\geqslant \lceil w^2/4 \rceil = \lceil g/2 \rceil$.\\

For the second statement, we follow \cite[Prop. 20]{munoz}, and use our notation from Section \ref{sec:surface}. The recursive equation defining $\zeta_{g+1}$ yields $g^2(\beta+(-1)^g)\zeta_{g-1} \equiv \zeta_{g+1} - \alpha\zeta_g$ (mod $\gamma$). Thus we have $g^2(\beta+(-1)^g8)J_{g-1}\subset J_g + (\gamma)$ where $J_g=(\zeta_g,\zeta_{g+1},\zeta_{g+2})$. Inductively, in $\mathbb{V}_g$ we obtain
    \begin{equation*}
        \prod_{r=1}^g r^2 (\beta + (-1)^r8) \; = \; \gamma\phi\label{eq:nilp1}
    \end{equation*}
for some $\phi\in \mathbb{V}_g$. Now since $p$ is prime and $p>g$, the factor $1^22^2\cdots g^2$ has an inverse mod $p$. After multiplying both sides by this inverse, and, if $g$ is odd, multiplying by $(\beta+8)$, we obtain the relation $(\beta^2-64)^{\lceil g/2 \rceil} \equiv 0$ (mod $\gamma$) within $\mathbb{V}_g\otimes \Z/p$, implying $N_\beta^p(g)\leqslant \lceil g/2 \rceil$.
\end{proof}

\section{Alternative proofs}\label{sec:alt}

The only instanton Floer theory used in the above proofs of Theorems \ref{thm:genus2} and \ref{thm:genus1} is the input from certain relations in the instanton Floer cohomology of a circle times a surface via Theorem \ref{thm:char4} and Proposition \ref{prop:mod8}; the instanton homology of homology 3-spheres is not required at all. In this Section we deduce Corollaries \ref{cor:2,5} and \ref{cor:2,3} with this latter framework at heart, with some help from Floer's exact triangle. While the two approaches complement one another, they also perhaps belong together in a more natural framework as suggested by Fr\o yshov's inequality in characteristic zero, Theorem \ref{thm:char0}; we merely scratch the surface here for $\Z/2$ and $\Z/4$ coefficients.\\

For an integer homology 3-sphere $Y$, denote by $I^\ast(Y;\F_2)$ Floer's instanton (co)homology from \cite{floer}, defined with $\F_2=\Z/2$ coefficients, and using the conventions of \cite{froyshov-equivariant}. This is a $\Z/8$-graded vector space over $\F_2$. There are elements $\delta_2\in I^4(Y;\F_2)^\ast$ and $\delta_2'\in I^1(Y;\F_2)$ defined using moduli spaces of insantons with a trivial flat limit at either end of $Y\times\R$. There is also a degree $2$ endomorphism on $I^\ast(Y;\F_2)$, denoted $v_2$, and defined using the second Stiefel-Whitney class of the $SO(3)$ basepoint fibration, analogous to how the degree 4 endomorphism $u$ is defined in \cite{froyshov-equivariant} on $I^\ast(Y;\Z)$ for certain gradings using the first Pontryagin class.\\

The elements $\delta_2\in I^4(Y;\F_2)^\ast$ and $\delta_2'\in I^1(Y;\F_2)$ are induced by (co)chains $\delta\in CI^4(Y;\F_2)^\ast$ and $\delta'\in CI^1(Y;\F_2)$ defined just as in \cite[2.1]{froyshov-equivariant}, but with $\F_2$-coefficients, which we now review. Recall that the cochain complex $CI^\ast(Y;\F_2)$ is generated by (perturbed) flat irreducible $SU(2)$ connections mod gauge. We will follow the notation of \cite{froyshov-equivariant} and write $M(\alpha,\beta)$ for the moduli space of finite-energy instantons on $\R\times Y$ with flat limit $\alpha$ at $+\infty$ and $\beta$ at $-\infty$, and with expected dimension lying in $[0,7]$. Write $\check{M}(\alpha,\beta)=M(\alpha,\beta)/\R$. The cochain $\delta'$ is then defined to be $\sum \# \check{M}(\beta,\theta)\beta$, where $\beta$ runs through the generators of $CI^1(Y;\F_2)$, and $\theta$ is the trivial connection. Similarly, $\delta \alpha = \#\check{M}(\theta,\alpha)$ for a generator $\alpha\in CI^4(Y;\F_2)$.\\

The map $v_2$ is induced by a degree $2$ cochain map $v$ on $CI^\ast(Y;\F_2)$, defined as follows. Let $\alpha$ and $\beta$ be generators such that $M(\alpha,\beta)$ is 2-dimensional. Let $\mathbb{E}_0\to M(\alpha,\beta)$ be the natural euclidean 3-plane bundle associated to a basepoint $(0,y_0)$. Choose sections $\sigma_1$ and $\sigma_2$ of $\mathbb{E}_0$ which are pulled back from the basepoint fibration over the configuration space of connections on $(-1,1)\times Y$. We arrange that $\sigma_1$ and $\sigma_2$ are linearly dependent at finitely many points, and transversely. Set
\begin{equation}
    \langle v(\beta), \alpha \rangle \; = \; \#\left\{[A]\in M(\alpha,\beta):\;\; \sigma_1([A])\in \R\cdot \sigma_2([A])\right\}.\label{eq:v2}
\end{equation}
That $v$ is a chain map, and is independent of any choices made, follows the proof of \cite[Thm. 4]{froyshov-equivariant}, except there are no trajectories that break at the reducible. Indeed, since $\dim M(\alpha,\beta)=2$, the relation $dv+vd=0$ comes from counting the ends of a 3-dimensional moduli space, cut down by two sections as above; such a moduli space has ends approaching trajectories broken at a trivial connection if its dimension is $\geqslant 5$, see \cite[\S 5.1]{donaldson-floer}. The construction of $v_2$ and its interactions with the analogous map for the third Stiefel-Whitney class of $\mathbb{E}_0$ was sketched by Fr\o yshov \cite{froyshov-regens}.

\vspace{0.25cm}

\begin{prop}\label{prop:v2}
    Let $X$ be a smooth, compact, oriented 4-manifold with negative definite lattice $\mathcal{L}=H^2(X;\Z)/\text{{\emph{Tor}}}$ and boundary an integer homology 3-sphere $Y$. If $H^\ast(X;\Z)$ has no $2$-torsion,
    \begin{equation*}
        \min\left\{j\geqslant 0: \; \delta_2 v_2^j=0\right\}\;  \geqslant \; f_2(\mathcal{L}).
    \end{equation*}
\end{prop}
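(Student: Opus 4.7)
The plan is to adapt the moduli-space count in the proof of Theorem~\ref{thm:char2} from closed 4-manifolds to this relative setting, with the instanton Floer complex $CI^\ast(Y;\F_2)$ playing the role that $\mathbb{V}'_g\otimes\F_2$ played there, and with the hypothesis $\delta_2 v_2^j = 0$ substituting for the nilpotency of $\alpha$ used there. I would argue by contradiction: suppose $\delta_2 v_2^j = 0$ for some $j < f_2(\mathcal{L})$, and choose data $(w,m,a)$ realizing $f_2(\mathcal{L})=|w^2|-m-1$, with $w\in\mathcal{L}$ extremal, $a \in \text{Sym}^m(\mathcal{L}^w)$, and $2^{-m}\eta(\mathcal{L},w,a,m)\equiv 1\pmod{2}$. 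The hypothesis that $H^\ast(X;\Z)$ has no 2-torsion ensures $\#\mathcal{T}$ is odd, which will be used in the boundary count.

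Next I would attach a cylindrical end $[0,\infty)\times Y$ to $X$ and work with the $U(2)$-bundle $E\to X$ with $c_1(E)$ a lift of $w$ and $c_2(E)=0$. For each perturbed flat critical point $\alpha$ on $Y$, let $M_X(\alpha)$ denote the moduli space of finite-energy projectively ASD connections on $E$ asymptotic to $\alpha$, and let $M_X(\alpha)'$ be its complement of tubular neighborhoods of reducibles. By the homology-sphere assumption, reducibles appear only in $M_X(\theta)$; up to $\#\mathcal{T}$, they are indexed by pairs $\{z,-z\}\subset \mathcal{L}$ with $z\in w+2\mathcal{L}$, and extremality of $w$ forbids reducible bubbling. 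Cutting $M_X(\alpha)$ by $\nu(x)^{|w^2|-m-2-j}\prod_{k}\mu(a_k)$ (each $\mu(a_k)$ integral by Lemma~\ref{lemma:2div}) and summing weighted by $\alpha$ over critical points of suitable grading produces a cocycle $\psi_X \in CI^{4+2j}(Y;\F_2)$ by a standard relative-invariant argument.

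I would then construct, in analogy with the space $\hat{\mathscr{M}}$ of (\ref{eq:thespace}), a 1-dimensional moduli space on $X$ built from $M_X(\theta)'$, cut down by the same $\nu(x)$ and $\mu(a_k)$ classes together with $j$ additional ``$v$-type'' section-pair cuts placed on the cylindrical end. Neck-stretching gluing identifies its non-compact ends with pairs consisting of a point in one of the 0-dimensional cutdowns of some $M_X(\alpha)'$ and a trajectory in $\R\times Y$ of the kind counted by $v^j\delta$ evaluated at $\alpha$. The reducible boundary count, computed as in (\ref{eq:links})--(\ref{eq:boundary}), equals $2^{-m}\#\mathcal{T}\cdot\eta(\mathcal{L},w,a,m)\equiv 1\pmod{2}$; the end count equals $\langle v^j\delta,\psi_X\rangle\pmod{2}$, and this pairing vanishes because $\delta_2 v_2^j=0$ makes $v^j\delta$ a coboundary while $\psi_X$ is a cocycle. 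Since the total boundary plus end count of a compact 1-manifold is even, we obtain $1+0\equiv 0\pmod{2}$, the desired contradiction.

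The main technical obstacle will be the chain-level construction of the $v$-type cuts on the cylindrical end, and the verification that under neck-stretching they realize the chain map $v$ on $CI^\ast(Y;\F_2)$, so that the end count factors exactly as $\langle v^j\delta,\psi_X\rangle$ rather than picking up extra terms from intermediate breaking at critical points (such terms must assemble into $d\phi$-contributions absorbed by the cocycle $\psi_X$). The reduction to the case $b_1(X)=0$ can be effected by surgering loops as in \cite[Prop.~2]{froyshov-inequality}, and the absence of irreducible bubbling in the needed energy range follows from extremality of $w$ as in Section~\ref{subsec:proofs}.
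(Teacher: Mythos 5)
Your proposal is correct and follows essentially the same route as the paper: the paper proves Proposition \ref{prop:v2} by establishing the pairing formula $\delta_2 v_2^j\cdot D_X^w(a)=0$ for $j<n$ and $\equiv 2^{-m}\eta(\mathcal{L},w,a,m)\pmod 2$ for $j=n=|w^2|-m-2$ (the mod 2 analogue of \cite[Prop.~1]{froyshov-equivariant}), which is exactly the reducible-boundary versus Floer-theoretic-ends count of a cut-down one-dimensional moduli space that you describe, followed by the same surgery on loops to reduce to $b_1(X)=0$. The only caveat is minor degree bookkeeping for the cocycle $\psi_X$ (it should live in the degree on which $\delta_2 v_2^j$ is a functional), which does not affect the argument.
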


\vspace{0.25cm}

The proof is an adaptation of Proposition 1 in \cite{froyshov-equivariant}, which uses the additional assumption $b_1(X)=0$. Fix $w\in H^2(X;\Z)$ descending to an extremal vector of the same name in $\mathcal{L}$, and $a\in \text{Sym}^m H_2(X;\Z)$ descending to an element of the same name in $\text{Sym}^m (\mathcal{L}^w)$ for some $m\geqslant 0$. For simplicity assume $a=a_1\cdots a_m$ where $a_i\in H_2(X;\Z)$. Form $X^+$ by attaching a cylindrical end $[0,\infty)\times Y$ to the boundary of $X$, and fix a metric $g$ on $X^+$which is a product along the end.
Consider the moduli space $\mathscr{M}_0$ of suitably perturbed instantons on $(X^+,g)$ on a $U(2)$-bundle with $c_1=w$ and relative second Chern number $k=0$ which are asymptotic to the trivial connection. Removing neighborhoods of reducibles we have a smooth moduli space $\mathscr{M}'_0$ of dimension $2|w^2|-3$. Cut this down as follows to obtain an unoriented 1-manifold:
\begin{equation*}
   \nu(x)^{|w^2|-m-2}\prod_{k=1}^m \mu(a_k) \mathscr{M}'_0
\end{equation*}
Similar to \cite{froyshov-equivariant}, counting the boundary points yields $2^{-m}\eta(\mathcal{L},w,a,m) \pmod 2$. The ends contribute the term $\delta_2 v_2^j \cdot D_X^w(a)$ where $D_X^w(a)$ is a relative Donaldson invariant $D_X^w(a)\in I^{4-4n}(Y;\F_2)$ where $n=|w^2|-m-2$. We obtain the second equality in
\begin{equation} \label{eq:relmod2reldoninv}
    \delta_2 v_2^j \cdot D_X^w(a) \; = \; \begin{cases} 0 & \text{for }0\leqslant j<n \\ 2^{-m}\eta(\mathcal{L},w,a,m)\;\;(\text{mod }2) & \text{for }j=n. \end{cases}
\end{equation}
The first equality follows from the same argument but with $k<0$, where there are no reducibles (and in some cases is true for degree reasons). The statement of Proposition \ref{prop:v2} follows for $b_1(X)=0$ from this formula and the definition of $f_2(\mathcal{L})$; the condition that $b_1(X)=0$ is then handled by surgering loops, cf. \cite[Prop.2]{froyshov-inequality}.\\

We have a similar inequality for $\Z/4$ coefficients. Here we let $u$ denote the degree 4 map defined on $CI^\ast(Y;\Z)$ as in \cite{froyshov-equivariant}, which in general is not a chain map, but satisfies $du-ud+2\delta\otimes \delta'=0$. The map $\delta u^n:CI^{4-4n}(Y;\Z)\to \Z$ is a chain map, and we denote by $\delta_4 u_4^n$ the map $I^{4-4n}(Y;\Z/4)\to \Z/4$ obtained after tensoring with $\Z/4$ and taking homology. This may depend on auxiliary choices, such as perturbation and metric; in the following assume choices are fixed.

\vspace{0.25cm}

\begin{prop}\label{prop:u4}
    Let $X$ be a smooth, compact, oriented 4-manifold with negative definite lattice $\mathcal{L}=H^2(X;\Z)/\text{{\emph{Tor}}}$ and boundary an integer homology 3-sphere $Y$. If $H^\ast(X;\Z)$ has no $2$-torsion,
    \begin{equation*}
        \min\left\{j\geqslant 0: \; \delta_4 u_4^j = 0\right\}\;  \geqslant \; f_4(\mathcal{L}).
    \end{equation*}
\end{prop}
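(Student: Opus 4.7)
The plan is to adapt the mechanism of Theorem \ref{thm:char4} to the relative setting of a 4-manifold with homology 3-sphere boundary, in direct analogy with how Proposition \ref{prop:v2} parallels Theorem \ref{thm:char2}, and how Proposition 1 of \cite{froyshov-equivariant} parallels Theorem \ref{thm:char0}. First I would reduce to the case $b_1(X)=0$ by surgering loops as in \cite[Prop.~2]{froyshov-inequality}; this leaves $\mathcal{L}$, the torsion subgroup, and the boundary $Y$ all untouched, so both sides of the proposed inequality are unchanged. Then I fix a triple $(w,m,a)$ realizing the maximum $f_4(\mathcal{L}) = (|w^2|-m)/2$ in (\ref{eq:f4}), so that $2^{-m_0} \eta(\mathcal{L},w,a,m) \not\equiv 0 \pmod 4$, with $a = a_1\cdots a_{m_0}\cdot a_1' \cdots a_{m_1}'$, $a_k \in \mathcal{L}^w$, $a_j' \in \mathcal{L}$, and lift $w$ and the $a_k, a_j'$ to $H^2(X;\Z)$ and $H_2(X;\Z)$ respectively. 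Writing $n = f_4(\mathcal{L}) - 1$, the heart of the argument is to establish the relative Donaldson invariant formula
\begin{equation*}
\delta_4 u_4^j \cdot D_X^w(a) \; \equiv \; \begin{cases} 0, & 0 \leqslant j < n, \\ \pm\,\#\mathcal{T}\cdot 2^{-m_0}\eta(\mathcal{L},w,a,m), & j = n, \end{cases} \quad (\mathrm{mod}\; 4),
\end{equation*}
where $D_X^w(a) \in I^{\ast}(Y;\Z/4)$ is the relative invariant defined by counting $\mu$-class cut-downs on the moduli space of projectively ASD $U(2)$-connections on $X$ (with cylindrical end $[0,\infty)\times Y$ attached) with $c_1 = w$ and $c_2 = 0$, using $\mu(a_k)$ (integral by Lemma \ref{lemma:2div}) and $2\mu(a_j')$ (integral by the analogous computation, since $-p_1(\mathbb{E})/a_j'$ is divisible by 2).

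The formula is obtained, in direct parallel with the proof of Theorem \ref{thm:char4}, by considering the $1$-dimensional cut-down moduli space
\begin{equation*}
\hat{\mathscr{M}} \; := \; (4\mu(x))^{n}\, \prod_{k=1}^{m_0}\mu(a_k)\, \prod_{j=1}^{m_1} 2\mu(a_j')\; \mathscr{M}'_0
\end{equation*}
on $X$ with cylindrical end, recalling that $u$ is realized geometrically by $4\mu(x)$. The boundary points of $\hat{\mathscr{M}}$ come from the deleted neighborhoods of reducibles in $\mathscr{M}'_0$; by the link-of-cone formula (\ref{eq:links}), and because extremality of $w$ rules out any bubbling on the reducible stratum, their count equals $\#\mathcal{T}\cdot 2^{-m_0}\eta(\mathcal{L},w,a,m) \pmod 4$, which is nonzero mod 4 since $\#\mathcal{T}$ is odd. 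The non-compact ends of $\hat{\mathscr{M}}$ arise from broken trajectories flowing into $Y\times\R$, and gluing theory identifies their signed count modulo $4$ with $\delta_4 u_4^n \cdot D_X^w(a)$. Equating boundary with ends yields the $j=n$ case of the formula, and the $j<n$ cases follow by identical bookkeeping on lower-dimensional cut-downs.

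Once the key formula is in hand the proposition is immediate: if $\delta_4 u_4^j = 0$ on $I^\ast(Y;\Z/4)$ for some $j \leqslant n = f_4(\mathcal{L}) - 1$, then in particular the $j=n$ case of the displayed formula forces $\#\mathcal{T}\cdot 2^{-m_0}\eta(\mathcal{L},w,a,m) \equiv 0 \pmod 4$, contradicting our choice of $(w,m,a)$. The main obstacle will be the careful handling of the chain-level algebra of $u$ in characteristic $4$. Because $du - ud + 2\delta\otimes\delta' = 0$, the operator $u$ is not literally a chain map over $\Z$; one must verify, following the treatment in \cite{froyshov-equivariant}, that $\delta u^j$ nevertheless descends to a well-defined operation $I^{\ast}(Y;\Z/4) \to \Z/4$, and that the anomaly $2\delta\otimes\delta'$ contributes to the counts of broken ends through the trivial flat in a controlled fashion that preserves the formula modulo $4$. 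The other technical points—choosing geometric $\mu$-representatives supported away from the cylindrical end, and organizing the end contributions as a product of a relative invariant on $X$ with an operation on $Y$—are routine adaptations of the arguments in Sections \ref{subsec:proofs} and \ref{sec:muclasses}.
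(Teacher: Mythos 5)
Your proposal is correct and follows essentially the same route as the paper: the paper simply cites the relative Donaldson invariant formula $\delta u^j\cdot D_X^w(a)$ from \cite[Prop.1]{froyshov-equivariant} (whose proof is exactly the boundary-versus-ends count you sketch), combines it with the integrality observations for $\mu(a_k)$ with $a_k\in\mathcal{L}^w$ and $2\mu(a_j')$ with $a_j'\in\mathcal{L}$ to get the correct $2^{-m_0}$ normalization, reduces mod $4$, and handles $b_1(X)\neq 0$ by surgering loops. The chain-level subtlety you flag is dealt with exactly as you anticipate: $u$ is not a chain map, but $\delta u^j$ is, so $\delta_4 u_4^j$ is well defined after tensoring with $\Z/4$.
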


\vspace{0.25cm}

The proof is similar to that of Proposition \ref{prop:v2}, but more directly uses the formula of \cite[Prop.1]{froyshov-equivariant}, the statement of which is the following, assuming $b_1(X)=0$; the proposition uses its mod 4 reduction. For $w\in H^2(X;\Z)$ descending to an extremal vector of the same name in $\mathcal{L}$, and $a\in \text{Sym}^m H_2(X;\Z)$ descending to an element of the same name in $\text{Sym}^m(\mathcal{L})$ for $m\geqslant 0$, there is a relative invariant $D_X^w(a)\in I^{4-4n}(Y;\Z[1/2^m])$ where $n=(|w^2|-m)/2-1$, and
\begin{equation}
    \delta u^j \cdot D_X^w(a) \; = \; \begin{cases} 0 & \text{for }0\leqslant j<n \\ 2^{-m}\#\mathcal{T}\cdot\eta(\mathcal{L},w,a,m) & \text{for }j=n. \end{cases}\label{eq:reldon}
\end{equation}
For both Propositions \ref{prop:v2} and \ref{prop:u4} we use the knowledge from Section \ref{subsec:proofs} of what kinds of classes $a$ can cut down moduli spaces when working over the appropriate coefficient ring. For example, if $a\in \text{Sym}^m(\mathcal{L}^w)$ then $D_X^w(a)$ is an element in $I^{4-4n}(Y;\Z)$, and the factor $1/2^m$ is unnecessary in the coefficient ring.\\

We expect that the left-hand sides of the inequalities of Propositions \ref{prop:v2} and \ref{prop:u4} can be replaced by more natural quantities. For example, the first of these should be a weaker form of a general inequality involving Fr\o yshov's homology cobordism invariant $q_2$ mentioned in the introduction. Similarly, the second is related to Fr\o yshov's framework as developed in \cite{froyshov-equivariant}, but with $\Z/4$ coefficients. We are now in a position to give an alternative proof of Corollary \ref{cor:2,3}.

\vspace{0.25cm}

\begin{proof}[Another proof of Corollary \ref{cor:2,3}] 
Let $Y=\Sigma(2,3,5)$. It is well-known that $CI^\ast(Y;\Z)$ is generated by two flat $SU(2)$ connections in degrees $0$ and $4$. The differential on $CI^\ast(Y;\Z)$ is zero, and hence $u$ is a chain map, and induces a map on $I^\ast(Y;\Z)$ which we also call $u$. By \cite[Prop.2]{froyshov-equivariant}, $\delta u$ is divisible by $8$, and in particular $\delta_4u_4 \equiv \delta u $ (mod 4) vanishes. The degree two map $v_2$ on $I^\ast(Y;\Z/2)$ is zero for grading reasons. Thus the left-hand sides of the inequalities in Propositions \ref{prop:v2} and \ref{prop:u4} are equal to 1, and the result follows from Lemmas \ref{lemma:indecomp} and \ref{lemma:e8}.
\end{proof}

\vspace{0.25cm}

Alternatively, we can also formulate the mod $8$ analogue of Proposition \ref{prop:u4} together with the computation $f_8(\Gamma_{12})=2$, and use this with Proposition \ref{prop:u4} and Lemma \ref{lemma:f4}. However, the above proof illustrates that constraints from inequalities arising from the mod $2$ and mod $4$ coefficient cases are sufficient. Further, the use of only Lemmas \ref{lemma:indecomp} and \ref{lemma:e8} shows that there is a minimal amount of algebra needed.\\

The computation $\delta u \equiv 0$ (mod 8) in the proof of Corollary \ref{cor:2,3} is computed in \cite{froyshov-equivariant} via basic gluing formulae for relative Donaldson invariants, using an embedding of the negative definite $E_8$ plumbing into a $K3$ surface. The same procedure may be attempted for $\Sigma(2,5,9)$, the boundary of a negative definite plumbing with intersection form $-\Gamma_{12}$ which itself embeds in the elliptic surface $E(3)$, as follows from \cite[Sec.2]{fs-can}, and builds on the construction explained at the beginning of Section \ref{sec:genus2}. However, we can obtain the congruence $\delta u \equiv 0$ (mod 8) for the Poincar\'{e} sphere by another method, which will also lead to another proof of Corollary \ref{cor:2,5} for $\Sigma(2,5,9)$, without reverting to gluing formulae for Donaldson invariants. We proceed to explain this.\\

As in the above proof, let $Y=\Sigma(2,3,5)$, and denote by $P$ the non-trivial $SO(3)$-bundle over $0$-surgery on the $(2,3)$-torus knot. Then for any coefficient ring we have the long exact sequence
\begin{equation}
 \cdots \;\;I^\ast(S^3) \xrightarrow{}   I^\ast(Y)   \xrightarrow{W_\ast}       I^\ast(P)              \xrightarrow{}         I^\ast(S^3)\;\; \cdots \label{eq:longexact1}
\end{equation}
This is Floer's exact triangle \cite{floer-triangle, bd}. The map $W_\ast$ is induced by a surgery 2-handle cobordism $W:Y\to Y_0$. Because the instanton cohomology of the 3-sphere vanishes, $W_\ast$ is an isomorphism. For the non-trivial bundle $P$, the map $u$ is also defined on instanton cohomology. The map $W_\ast$ does not commute with $u$; in fact $W_\ast u - u W_\ast = 2 \delta\otimes \delta'_W$ where $\delta_{W}'$ counts isolated instantons on $W$ with trivial limit at $Y$. (This follows from a version of \cite[Theorem 6]{froyshov-equivariant}.) From this it follows, however, that $W_\ast u = u W_\ast$ on $I^0(Y;\Z)$. Thus to show that $\delta u \equiv 0$ (mod 8) on $I^0(Y;\Z)$ it suffices to show that $u\equiv 0$ (mod 8) on $I^\ast(P;\Z)$.\\

The (2,3) torus knot has genus 1. Consequently, there is a genus 1 surface embedded in the 0-surgery over which the bundle $P$ restricts non-trivially; this is formed by capping off a Seifert surface in the complement of the surgery neighborhood with a disk glued in from $0$-surgery. Following \cite[\S 6]{froyshov-equivariant} we stretch along a link of this surface in $\R$ cross the 0-surgery diffeomorphic to a 3-torus $T^3$ to conclude that $u$ factors through the corresponding map on $\mathbb{V}_1'$. However, on this latter group, $u=\beta'\equiv 0$ (mod 8), establishing the claim. We note that essentially the same argument shows that $\delta u\equiv 0$ (mod 8) for the family of Brieskorn spheres $\Sigma(2,3,6k\pm 1)$, and so we obtain alternative proofs for Corollaries \ref{cor:2,3,11} and \ref{cor:2,3,12n+5} as well.

\vspace{0.25cm}

\begin{proof}[Another proof of Corollary \ref{cor:2,5}] 
Let $Y=\Sigma(2,5,9)$. The exact sequence (\ref{eq:longexact1}) now applies to surgery on the (2,5) torus knot. As for $\Sigma(2,3,5)$, the Floer complex for $Y$ has zero differential and $u$ is chain map. Again, although $u$ and $W_\ast$ do not commute in general, they do on $I^0(Y;\Z)$. Furthermore, $v_2$ and the mod 2 reduction of $W_\ast$ commute. Next, the $(2,5)$ torus knot is of genus 2, and $I^\ast(P;\Z)$ has $u\equiv 0 $ (mod 4) and $v_2^2 \equiv 0 $ (mod 2) since $\beta'\equiv 0$ (mod 4) and $(\alpha')^2\equiv 0$ (mod 2) within $\mathbb{V}'_2$. Now the left-hand sides of the inequalities in Propositions \ref{prop:v2} and \ref{prop:u4} are $2$ and $1$, respectively, and with Lemma \ref{lemma:f4} the result follows.
\end{proof}

\section{The lattice $E_7^2$}\label{sec:e72}

The root lattice $\E_7$ is the subset of $\frac{1}{2}\Z^8$ consisting of vectors $x=(x_1,\ldots,x_8)$ with $\sum x_i=0$ and all $x_i$ in one of $\Z^8$ or $\frac{1}{2}+\Z^8$. The positive definite unimodular lattice $E_7^2$ is defined by
\[
    E_7^2 \; = \; \E_7 \oplus \E_7 \cup \left(g + \E_7\oplus \E_7\right),
\]
\[
    g=\left((\tfrac{3}{4}^2, -\tfrac{1}{4}^6),(\tfrac{3}{4}^2, -\tfrac{1}{4}^6)\right)\in \E_7^\ast
\]
We note that $\E_7^\ast/\E_7$ is cyclic of order 2 generated by $[g]$. In this section we show

\vspace{0.25cm}

\begin{prop}
	$e_0(E^7_2)=1$, $f_2(E_2^7)=2$ and $f_4(E^7_2)=2$.
\end{prop}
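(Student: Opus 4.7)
The plan is to compute each invariant by direct analysis of extremal vectors in $E_7^2$. The first step is to record the arithmetic of the lattice: each $\mathsf{E}_7$ summand has 126 roots (square 2), and two distinct roots $r,r'\in\mathsf{E}_7$ differ by an element of $2\mathsf{E}_7$ only when $r'=\pm r$ (by Cauchy--Schwarz, since $2\mathsf{E}_7$ has minimum 8). The glue coset $g+\mathsf{E}_7\oplus\mathsf{E}_7$ has minimum $3$. A short case check on root types (integer, half-integer, and mixed) shows that for any root $r\in\mathsf{E}_7$ the class $[r+2g_0]$ in $\mathsf{E}_7/2\mathsf{E}_7$ is nonzero and is not a root class; since $\mathsf{E}_7$ is even, this forces the coset $r+2g_0+2\mathsf{E}_7$ to have minimum exactly $4$.

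The lower bounds $f_2(E_7^2)\geq 2$ and $f_4(E_7^2)\geq 2$ come from two explicit extremal vectors. For $f_2\geq 2$, I would take $w$ a minimum of the glue coset, so $w^2=3$. If $v=w+2u\in w+2\mathcal{L}$ has $v^2\leq 3$ and $v\cdot w\geq 0$, then $4u^2=(w-v)^2\leq 6$ forces $u=0$; hence $\text{Min}(w+2\mathcal{L})=\{\pm w\}$ and $\eta(\mathcal{L},w)=\pm 1$ is odd. For $f_4\geq 2$, I would take $w=(r_1,r_2)$ with each $r_i$ a root of $\mathsf{E}_7$. Using the coset computation above, the minimum of $(w+2g)+2(\mathsf{E}_7\oplus\mathsf{E}_7)$ is $4+4=8>4$, so $w$ is extremal with $\text{Min}(w+2\mathcal{L})=\{(\pm r_1,\pm r_2)\}$; all four exponents $((z+w)/2)^2$ are even and $\eta(\mathcal{L},w)=2\not\equiv 0\pmod{4}$.

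The crux is the upper bound $e_0(E_7^2)\leq 1$. Combined with $f_4\geq 2$ and the inequality $e_0\geq\lceil f_4/2\rceil$, this forces $f_4=2$ and $e_0=1$ (with $e_0\geq 1$ following from Proposition~\ref{prop:diag}, since $E_7^2$ is not diagonal). To prove $e_0\leq 1$, I must show that for every extremal $w$, every $m\geq 0$ with $w^2\equiv m\pmod{2}$, and every $a\in\text{Sym}^m(\mathcal{L})$, we have $\eta(\mathcal{L},w,a,m)=0$ whenever $|w^2|-m\geq 5$; equivalently, the signed symmetric tensor $\sum_{z\in\text{Min}(w+2\mathcal{L})}(-1)^{((z+w)/2)^2}\,z^{\otimes m}$ vanishes in $\text{Sym}^m(\mathcal{L})$ for all such $m$. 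My approach is to enumerate extremal $w$ by squared norm (treating $|w^2|=6,7,8,\dots$ separately) and by coset type, using the symmetry group $W(\mathsf{E}_7)^2\rtimes\Z/2\subset\text{Aut}(E_7^2)$ to reduce to a small list of orbits. This case analysis is the principal obstacle.

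Finally, $f_2(E_7^2)\leq 2$ must be verified separately, since $e_0=1$ only yields $f_2\leq 4$. I would check that for every extremal $w$ and every $a\in\text{Sym}^m(\mathcal{L}^w)$ with $|w^2|-m-1\geq 3$, the integer $2^{-m}\eta(\mathcal{L},w,a,m)$ is even. The two vectors from the lower-bound step satisfy this trivially (for instance $\eta(\mathcal{L},(r_1,r_2))=2$ is even), and the remaining extremal $w$ with $|w^2|\geq 5$ are handled via the same orbit enumeration as in the $e_0$ step.
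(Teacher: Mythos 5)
Your lower bounds are correct and essentially match the paper's: the square-$3$ glue vector $g=(a,a)$ with $\text{Min}(g+2\mathcal{L})=\{\pm g\}$ gives $f_2\geqslant 2$, and a sum of roots from the two $\mathsf{E}_7$ summands gives $f_4\geqslant 2$ exactly as in Lemma \ref{lemma:indecomp}. Your logical reductions are also sound: $e_0\leqslant 1$ together with $e_0\geqslant\lceil f_4/2\rceil$ does force $f_4=2$, and you correctly notice that $f_2\leqslant 2$ does not follow from $e_0=1$ and must be checked separately.

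The genuine gap is that the upper bounds --- which constitute essentially all of the work --- are announced but never carried out. You write that you would ``enumerate extremal $w$ by squared norm and by coset type, using the symmetry group \dots to reduce to a small list of orbits,'' and you yourself flag this case analysis as ``the principal obstacle''; but a proof has to clear that obstacle, not name it. Concretely, what is missing is: (1) a classification of all $2^{14}$ index-two cosets of $E_7^2$ up to automorphism, which the paper does by splitting into cosets in the image of $\pi:(\mathsf{E}_7\oplus\mathsf{E}_7)/2(\mathsf{E}_7\oplus\mathsf{E}_7)\to\mathcal{L}/2\mathcal{L}$ (reducing, via the three coset types $x,y,z$ of $\mathsf{E}_7$ and glue-vector identities such as $(z,z)=2g$, to the five representatives $(x,0),(y,0),(z,0),(x,x),(x,y)$) and cosets outside that image (reducing to $(a,a),(a,b),(a,c),(b,b),(b,c),(c,c)$, verified complete by the orbit count $1568+3920+112+2450+140+2=2^{13}$); (2) the determination of $\text{Min}(w+2\mathcal{L})$ for each representative of norm $\geqslant 5$; and (3) the verification that $\eta(\mathcal{L},w,a,m)$ vanishes (respectively is even) for all the data entering $e_0$ (respectively $f_2$) --- e.g.\ $\eta(\mathcal{L},w)=0$ for $w\in\{(x,y),(z,0)\}$ and $\eta(\mathcal{L},w,e,1)=0$ for $w\in\{(b,b),(b,c),(c,c)\}$. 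Without steps (1)--(3), none of $e_0\leqslant 1$, $f_4\leqslant 2$, or $f_2\leqslant 2$ is established, so the proposal proves only the inequalities $f_2,f_4\geqslant 2$.
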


\vspace{0.25cm}

These computations show that even if the mod 2 inequality \eqref{eq:mod2speculation} were true in general, it would not be sufficient to prove Theorem \ref{thm:genus2}. In the course of the proof to follow we leave some of the computations to the reader.

\vspace{0.25cm}

\begin{proof}
We need to understand the index two cosets of $\mathcal{L}$ and their extremal vectors. We divide the cosets into two types: those in the image of the inclusion-induced map
\[
	\pi:\frac{\E_7\oplus \E_7}{2\left(\E_7\oplus \E_7\right)} \longrightarrow \frac{\mathcal{L}}{2\mathcal{L}}
\]
and those that are not. To better understand the former case, we list the index two cosets of $\E_7$. These are easily found by hand, and are also listed in \cite[p.169]{conwaysloane}. First define
\[
x\; =\;  (1,-1,0^6), \qquad y \; = \;  (1^2,-1^2,0^4), \qquad z \; = \; (\tfrac{3}{2}^2,-\tfrac{1}{2}^6).
\]
Note that $x^2 = 2$, $y^2=4$ and $z^2=6$. Consider the cosets $w+2\E_7$ for $w\in\{0,x,y,z\}$. After applying automorphisms of $\E_7$ to these we obtain all cosets in $\E_7/2\E_7$. There are $63$ cosets in the orbit of $x+2\E_7$, each represented by a vector of square 2, unique up to sign, and there are similarly $63$ cosets in the orbit of $y+2\E_7$, each having 12 square 4 vectors. There are only two other cosets, represented by $0$ and $z$, which are fixed under the action of the automorphism group. Thus the total number of cosets is $1+63+63+1=2^7$, as expected.\\

The cosets in $\mathcal{L}$ that lie in the image of $\pi$ are therefore represented by $(u,v)$ for $u,v\in\{0,x,y,z\}$ and some cosets obtained from these by applying automorphisms. The case $(z,z)$ can be ignored; indeed, $(z,z)=2g$, so this vector represents the zero coset. Next we note $(y,z)-2g=(t,0)$ where $t=(-1/2^4,1/2^4)$ has square 2, and $(y,y)-2g =(t,t)$, a vector of square 4. Similarly, $(x,z)$ is mod 2 equivalent to a vector of square 4 supported in $\E_7\oplus 0$. Thus by symmetry, when maximizing over the data defining $e_0(\mathcal{L})$, $f_2(\mathcal{L})$ and $f_4(\mathcal{L})$ which has $w$ extremal and $w+2\mathcal{L}$ contained in the image of $\pi$, we may restrict our attention to $w$ being among $(x,0)$, $(y,0)$, $(z,0)$, $(x,x)$ and $(x,y)$.\\

Now we consider cosets not contained in the image of $\pi$. We claim that upon defining 
\[
        a \; = \; (\tfrac{3}{4}^2,-\tfrac{1}{4}^6), \qquad b \; = \;  (\tfrac{3}{4}^3,-\tfrac{5}{4},-\tfrac{1}{4}^4), \qquad c \; = \; (\tfrac{7}{4}^1,-\tfrac{1}{4}^7),
\]
all elements in $\mathcal{L}/2\mathcal{L}- \text{im}(\pi)$ are obtained from the cosets represented by $(a,a)$, $(a,b)$, $(a,c)$, $(b,b)$, $(b,c)$ and $(c,c)$ after perhaps applying an automorphism of $\mathcal{L}$. The claim is verified by counting. First note that $\pi$ has 1-dimensional kernel. Indeed, a basis for $E_7^2$ is given by the rows of the matrix:
\[\arraycolsep=2pt\def\arraystretch{1}
\left[\begin{array}{rrrrrrrrrrrrrrrr}
\frac{1}{4} & \frac{1}{4} & \frac{1}{4} & \frac{1}{4} &\frac{1}{4} & \frac{1}{4} & -\frac{3}{4} & -\frac{3}{4} &  \phantom{-}\frac{1}{4} &  \phantom{-}\frac{1}{4} & \frac{1}{4} & \frac{1}{4} & \frac{1}{4} & \frac{1}{4} & -\frac{3}{4} & -\frac{3}{4} \\
\frac{1}{2} & \frac{1}{2} & \frac{1}{2} & \frac{1}{2} & -\frac{1}{2} & -\frac{1}{2} & -\frac{1}{2} & -\frac{1}{2} & 0 & 0 & 0 & 0 & 0 & 0 & 0 & 0 \\
1 & -1 & 0 & 0 & 0 & 0 & 0 & 0 & 0 & 0 & 0 & 0 & 0 & 0 & 0 & 0 \\
0 & 1 & -1 & 0 & 0 & 0 & 0 & 0 & 0 & 0 & 0 & 0 & 0 & 0 & 0 & 0 \\
0 & 0 & 1 & -1 & 0 & 0 & 0 & 0 & 0 & 0 & 0 & 0 & 0 & 0 & 0 & 0 \\
0 & 0 & 0 & 1 & -1 & 0 & 0 & 0 & 0 & 0 & 0 & 0 & 0 & 0 & 0 & 0 \\
0 & 0 & 0 & 0 & 1 & -1 & 0 & 0 & 0 & 0 & 0 & 0 & 0 & 0 & 0 & 0 \\
0 & 0 & 0 & 0 & 0 & 1 & -1 & 0 & 0 & 0 & 0 & 0 & 0 & 0 & 0 & 0 \\
0 & 0 & 0 & 0 & 0 & 0 & 0 & 0 & 0 & 1 & -1 & 0 & 0 & 0 & 0 & 0 \\
0 & 0 & 0 & 0 & 0 & 0 & 0 & 0 & 0 & 0 & 1 & -1 & 0 & 0 & 0 & 0 \\
0 & 0 & 0 & 0 & 0 & 0 & 0 & 0 & 0 & 0 & 0 & 1 & -1 & 0 & 0 & 0 \\
0 & 0 & 0 & 0 & 0 & 0 & 0 & 0 & 0 & 0 & 0 & 0 & 1 & -1 & 0 & 0 \\
0 & 0 & 0 & 0 & 0 & 0 & 0 & 0 & 0 & 0 & 0 & 0 & 0 & 1 & -1 & 0 \\
0 & 0 & 0 & 0 & 0 & 0 & 0 & 0 & 0 & 0 & 0 & 0 & 0 & 0 & 1 & -1
\end{array}\right]
\]
Every row in the matrix except for the first lies in $\E_7\oplus \E_7$, and the first row is equivalent modulo $2\mathcal{L}$ to the vector $g\in\mathcal{L}$. Thus every coset not in the image of $\pi$ is of the form $[g]+[w]$ where $[w]\in\text{im}(\pi)$, and there are $2^{13}=8192$ such cosets.\\

We consider orbits of the automorphism group acting on $[w]=w+2\mathcal{L}$ as $w$ varies through the above representatives. Let $G$ be the subgroup of $\text{Aut}(\mathcal{L})$ generated by automorphisms that permute the first 8 or last 8 coordinates, the automorphism that swaps the first and last 8 coordinates, and the automorphism $\sigma$ that negates the first 8 coordinates. First consider
\[
    w \; = \; (a,a)\; = \; \left(\left( \phantom{}\tfrac{3}{4},\phantom{}\tfrac{3}{4},-\tfrac{1}{4},-\tfrac{1}{4},-\tfrac{1}{4},-\tfrac{1}{4},-\tfrac{1}{4},-\tfrac{1}{4}\right),\left( \phantom{}\tfrac{3}{4},\phantom{}\tfrac{3}{4},-\tfrac{1}{4},-\tfrac{1}{4},-\tfrac{1}{4},-\tfrac{1}{4},-\tfrac{1}{4},-\tfrac{1}{4}\right)\right)
\]
Then $G\cdot w$ consists of vectors obtained from $w$ by permuting the ``$3/4$'' terms within each $\E_7$-factor and changing signs on each $\E_7$-factor. Thus $\# G\cdot w = 4\cdot {8\choose 2}\cdot {8\choose 2}=3136$. The only mod 2 congruences among $v\in G\cdot w$ are $v\equiv -v$, and so $\# G\cdot [w] = \frac{1}{2}\# G\cdot w = 1568$. Next, consider
\[
    w \; = \; (a,b)\; = \; \left(\left( \phantom{}\tfrac{3}{4},\phantom{}\tfrac{3}{4},-\tfrac{1}{4},-\tfrac{1}{4},-\tfrac{1}{4},-\tfrac{1}{4},-\tfrac{1}{4},-\tfrac{1}{4}\right),\left( \phantom{}\tfrac{3}{4},\phantom{}\tfrac{3}{4},\tfrac{3}{4},-\tfrac{5}{4},-\tfrac{1}{4},-\tfrac{1}{4},-\tfrac{1}{4},-\tfrac{1}{4}\right)\right)
\]
We compute $\# G\cdot w  = 4\cdot 2\cdot {8 \choose 2}\cdot 4{8 \choose 4}$. However, the stabilizer for $G$ acting on $[w]$ consists of $\sigma$, the automorphism swapping the first and last 4 coordinates of the second $\E_7$-factor, and any permutation preserving the first 12 coordinates. Taking this into account, we compute $\# G\cdot [w]  = 2\cdot {8 \choose 2}\cdot {8 \choose 4}=3920$. We proceed in this manner to find that $\# G\cdot [w]$ for $w$ among $(a,a)$, $(a,b)$, $(a,c)$, $(b,b)$, $(b,c)$ and $(c,c)$ is equal to $1568$, $3920$, $112$, $2450$, $140$ and $2$, respectively. These add up to $8192$, and this verifies the claim stated in the previous paragraph.\\

In summary, when maximizing over the data defining $e_0(\mathcal{L})$, $f_2(\mathcal{L})$ and $f_4(\mathcal{L})$ we may restrict our attention to computing $\eta(\mathcal{L},w,a,m)$ for which $w$ is in the following table.\\

\begin{table}[h!]
  \centering
\setlength\tabcolsep{.15cm}
\renewcommand{\arraystretch}{1.2}
  \begin{tabular}{cccccccccccc}
    $w$ &  $(x,0)$ & $(a,a)$ & $(x,x)$ & $(y,0)$ & $(a,b)$ & $(a,c)$ & $(x,y)$ & $(z,0)$ & $(b,b)$ & $(b,c)$ & $(c,c)$\\
    \hline
    $w^2$  & $2$ & $3$ & $4$ & $4$ & $5$ & $5$ & $6$ & $6$ & $7$ & $7$ & $7$   \\ 
  \end{tabular}
\end{table}
 
In each case $w$ is extremal. We next claim the following, where in each case $w$ runs over the vectors in the table: (i) $\eta(\mathcal{L},w)=0$ for $w\in\{(x,y),(z,0)\}$ and $\eta(\mathcal{L},w,e,1)=0$ for $w\in\{(b,b),(b,c),(c,c)\}$ as $e$ runs over a basis for $\mathcal{L}$; and (ii) $2^{-m}\eta(\mathcal{L},w,e_1\cdots e_m,m)\equiv 0$ (mod 2) for each $m\geqslant 0$ with $w^2-m\geqslant 4$, where $e_1,\ldots, e_m$ are arbitrary elements of a basis for $\mathcal{L}^w$. It is straightforward to verify these claims by computer once one knows $\text{Min}(w+2\mathcal{L})$ for each $w$ above. For example, if $w=(c,c)$, this set consists of the $64$ vectors obtained by permuting the placement of the $7/4$ terms within each $\E_7$-summand. Claim (i) implies $e_0(\mathcal{L})=1$ and $f_4(\mathcal{L})=2$. More precisely, for the latter, it establishes that $f_4(\mathcal{L})\leqslant 2$, and Lemma \ref{lemma:indecomp} implies equality. Claim (ii) implies $f_2(\mathcal{L})=2$. This completes the proof.
\end{proof}

\vspace{0.25cm}

The following result collects the lattices that occur in Elkies' List, Table \ref{tab:table1} above, under the constraint $f_2(\mathcal{L})\leqslant 2$. In terms of our topological tools, it combines the restrictions of having $d$-invariant 2 and the inequality of Proposition \ref{prop:v2} having left-hand side equal to 2. We do not know if the lattice $E_7^2$ ever occurs under the hypotheses given.

\vspace{0.25cm}

\begin{prop}\label{prop:further1}
    Suppose $Y$ is an integer homology 3-sphere with $d(Y)=2$ and $\delta_2 v_2 = 0$. If a smooth, compact, oriented 4-manifold with no 2-torsion in its homology has boundary $Y$ and reduced negative-definite non-diagonal intersection form $\mathcal{L}$, then $-\mathcal{L}$ is one of $E_8$, $\Gamma_{12}$, $E_7^2$.
\end{prop}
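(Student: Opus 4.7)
The plan is to combine the Ozsv\'ath--Szab\'o $d$-invariant bound with the Floer-theoretic hypothesis to shortlist $-\mathcal{L}$ within Elkies' 14 lattices, and then to sift this list down to the three named.

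First, I would apply the inequality \eqref{eq:dinv} to the negative definite $X$ bounding $Y$: since $b_2(X) = n$ and $d(Y)=2$, every characteristic $\xi \in \mathcal{L}$ satisfies $(n + \xi^2)/4 \leq 2$, so the corresponding characteristic vector of the positive definite lattice $-\mathcal{L}$ has square at least $n-8$. By Elkies' theorem and the hypothesis that $\mathcal{L}$ is reduced, $-\mathcal{L}$ must be one of the 14 lattices in Table \ref{tab:table1}.

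Second, I would invoke Proposition \ref{prop:v2}. Since $\mathcal{L}$ is non-diagonal, Proposition \ref{prop:diag} forces $f_2(\mathcal{L}) \geq 1$, and hence $\delta_2 \neq 0$ (else the minimum in Proposition \ref{prop:v2} would be $0$). Reading the Floer-theoretic hypothesis of the proposition as the natural mod-$2$ analogue of the left side of Theorem \ref{thm:char2} being at most $2$, as stated in the paragraph preceding the proposition, Proposition \ref{prop:v2} then yields $f_2(\mathcal{L}) \leq 2$.

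It remains to identify which of the 14 lattices satisfy $f_2 \leq 2$. For the three named in the conclusion we have $f_2(E_8) = 1$, $f_2(\Gamma_{12}) = 2$ via Proposition \ref{prop:gamma} applied with $g=2$ together with the upper bound from Corollary \ref{cor:link} applied to $-\Sigma(2,5,9)$, and $f_2(E_7^2) = 2$ by the explicit computation in Section \ref{sec:e72}. For each of the eleven remaining lattices, the goal is to exhibit an extremal vector $w$ and a symmetric monomial $a \in \mathrm{Sym}^m(\mathcal{L}^w)$ with $|w^2| - m - 1 \geq 3$ and $2^{-m}\eta(\mathcal{L},w,a,m) \equiv 1 \pmod 2$. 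For $A_{15}$ and $O_{23}$ this is already observed at the end of Section \ref{sec:genus2}; in fact $m(\mathcal{L}) \geq 3$. For the other nine lattices, each of which has decomposable root system, I would hunt for small extremal vectors (square $3$ or $4$) whose mod-$2\mathcal{L}$ cosets have an odd half-count of minimal representatives; the main obstacle will be the tedium of running through the table and organizing the arithmetic uniformly rather than lattice by lattice.
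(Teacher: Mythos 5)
Your architecture coincides with the paper's: the $d$-invariant inequality plus Elkies reduces $-\mathcal{L}$ to the fourteen lattices of Table \ref{tab:table1}, and Proposition \ref{prop:v2} with the Floer-theoretic hypothesis is read as forcing $f_2(\mathcal{L})\leqslant 2$, so that it remains to show $f_2\geqslant 3$ for the eleven lattices other than $E_8$, $\Gamma_{12}$, $E_7^2$. The problem is that you stop exactly where the content of the proof begins: for nine of those eleven lattices you only promise to ``hunt'' for suitable extremal vectors, and that hunt \emph{is} the proof. The paper disposes of five of your nine ($A_{11}E_6$, $A_9^2$, $A_7^2D_5$, $A_5^4$, $A_3^7$) uniformly: whenever the root system contains $\A_n$ with $n\geqslant 3$, the vector $w=(1,1,-1,-1,0,\ldots,0)\in\A_n$ has $w^2=4$ and $\text{Min}(w+2\mathcal{L})=\text{Min}(w+2\A_n)$ of cardinality $6$, so $\tfrac{1}{2}\#\text{Min}$ is odd and $m(\mathcal{L})\geqslant 3$. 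The remaining four ($D_8^2$, $D_6^3$, $D_4^5$, $A_1^{22}$) cannot be handled by vectors lying in the root lattice at all --- in $\D_8\oplus\D_8$, for instance, the natural norm-$4$ root-lattice cosets all have an even half-count of minimal vectors --- so one must exhibit explicit glue vectors (e.g.\ $w=g_1+g_2$ of square $4$ in $D_8^2$, and norm-$5$ glue vectors in $D_4^5$ and $A_1^{22}$) and verify $\text{Min}(w+2\mathcal{L})=\{\pm w\}$. Without these constructions, which occupy most of the paper's proof, the exclusion step is unproven.

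Two smaller points. A vector of square $3$ can never certify $f_2\geqslant 3$, since $f_2$ maximizes quantities of the form $|w^2|-m-1\leqslant |w^2|-1$; your search must be restricted to $w^2\geqslant 4$ (the paper uses $w^2\in\{4,5\}$ throughout). Also, read literally the hypothesis $\delta_2 v_2=0$ makes the left-hand side of Proposition \ref{prop:v2} at most $1$, which would give $f_2(\mathcal{L})\leqslant 1$ and exclude $\Gamma_{12}$ and $E_7^2$ as well; your bound $f_2\leqslant 2$ matches the paper's intended reading (and its preceding paragraph), but this discrepancy deserves an explicit remark rather than silent adoption.
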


\vspace{0.15cm}

\begin{proof}
    By Proposition \ref{prop:v2} it suffices to show that $m(\mathcal{L})\geqslant 3$ for the lattices in Table \ref{tab:table1} other than the three given. As in the proof of Lemma \ref{lemma:e8}, if the the root lattice $R\subset \mathcal{L}$ contains $\A_n$ for $n\geqslant 3$, then $w=(1,1,-1,-1,0,\ldots,0)\in \A_n$ shows that $m(\mathcal{L})\geqslant 3$. This leaves $D_8^2$, $D_6^3$, $D_4^5$, $A_1^{22}$, $O_{23}$ in Table \ref{tab:table1}. The following descriptions of the first three of these lattices are from \cite{cs-det1}.\\
    
    Suppose $\mathcal{L}=D_8^2$. This lattice is generated by $\Ds_8\oplus \Ds_8$ along with $g_1=((1/2^8), (-1^1,0^7))$ and $g_2=((-1^1,0^7), (1/2^8))$. As before, superscripts denote repeated entries. Then $w=g_1+g_2$ is extremal of square 4, and $\text{Min}(w+2\mathcal{L})=\{w,-w\}$, implying $m(\mathcal{L})\geqslant w^2-1 =3$.\\
    
    Next, suppose $\mathcal{L}=D_6^3$. Then $\mathcal{L}$ is generated by the root lattice $\Ds_6\oplus \Ds_6\oplus \Ds_6$ and $g_1=(0,(1/2^6), (1/2^5,-1/2^1))$, $g_2=( (1/2^5,-1/2^1),0,(1/2^6))$ and $g_3 = ((1/2^6), (1/2^5,-1/2^1),0)$. Then $w=g_1-g_2$ is extremal of square 4 and as before $m(\mathcal{L})\geqslant 3$.\\
    
    Next, suppose $\mathcal{L}=D_4^5$. Then $\mathcal{L}$ is generated by $\Ds_4\oplus \Ds_4 \oplus \Ds_4\oplus \Ds_4 \oplus \Ds_4$ along with $g=((1/2^4)^5)$ and $g_1=(0,(0^3,1^1),(1/2^3,-1/2^1),(1/2^3,-1/2^1),(0^3,1^1))$, and cyclic permutations $g_2,g_3,g_4,g_5$ of $g_1$. Then $w=g$ is extremal of square 5 with $\text{Min}(w+2\mathcal{L})=\{w,-w\}$, implying $m(\mathcal{L})\geqslant  4$.\\
    
    Now suppose $\mathcal{L}=A_1^{22}$. We sketch the construction of this lattice following Construction A of \cite[Ch.7]{conwaysloane} using the shortened Golay code $\mathsf{C}_{22}$. Let $\mathsf{S}$ be the subspace of the Golay code $\mathsf{C}_{24}$ consisting of vectors with first two coordinates 00 or 11, where $\mathsf{C}_{24}\subset \F_2^{24}$ is spanned by the rows of Fig. 3.4 in \cite[p.84]{conwaysloane}. Then $\mathsf{C}_{22}$ is the subspace of $\F_2^{22}$ obtained by projecting $\mathsf{S}$ onto the last 22 coordinates, and $A_1^{22}$ is the subset of $\R^{22}$ consisting of vectors $\vec{x}/\sqrt{2}$ such that $\vec{x}$ (mod 2) lies in $\mathsf{C}_{22}$. Let $\vec{v}\in \{0,1\}^{22}$ descend to the code word $\vec{v}$ (mod 2) in $\mathsf{C}_{22}$ with 10 entries equal to $1$, obtained by summing the first 11 rows of Fig. 3.4 in \cite[p.84]{conwaysloane}, ignoring the first two coordinates. Then $w=\vec{v}/\sqrt{2}\in A_1^{22}$ is extremal of square 5. It is straightforward to verify that any extremal vector equivalent to $w$ is of the form $w+2r$ for a root $r$. The roots are the elements with one nonzero entry equal to $\pm 2/\sqrt{2}$. From this we obtain $\text{Min}(w+2\mathcal{L})=\{w,-w\}$, implying $m(\mathcal{L})\geqslant 4$.\\
    
    Finally, suppose $\mathcal{L}=O_{23}$, the shorter Leech lattice. Let $w$ be any vector of square 5; such a vector exists by inspecting the theta-series of $O_{23}$, see (7) in \cite[p.443]{conwaysloane}. Using that $O_{23}$ has no roots, $\text{Min}(w+2\mathcal{L})=\{w,-w\}$, and $m(\mathcal{L})\geqslant 4$.
\end{proof}

\vspace{0.25cm}

\vspace{0.25cm}

\Addresses

\end{document}